\pgfplotsset{compat=1.18}
\newtheorem{Lemma}{Lemma}[section]
\newtheorem{theorem}{Theorem}[section]
\newtheorem{proposition}{Proposition}[section]
\newtheorem{assumption}{Assumption}
\theoremstyle{definition}
\newtheorem{example}{Example}[section]
\newtheorem{remark}{Remark}
\definecolor{darkred}{rgb}{1, 0.1, 0.3}
\definecolor{darkblue}{rgb}{0.1, 0.1, 1}
\definecolor{darkgreen}{rgb}{0,0.6,0.5}
\newcommand{\R}{\mathbb{R}}
\newcommand{\paren}[1]{\left(#1\right)}
\DeclareMathOperator{\argmin}{argmin}
\DeclareMathOperator{\argmax}{argmax}
\let\epsilon\varepsilon
\begin{document}
	
	\title{Penalty-Based Smoothing of Convex Nonsmooth Supremum Functions with Accelerated Inertial Dynamics\thanks{In memory of Professor Hedy Attouch, whose work inspired this paper.}}

	\author[1]{Samir Adly\thanks{\texttt{samir.adly@unilim.fr}}}
	\author[2,3]{Juan Jos\'e Maul\'en\thanks{\texttt{juan.maulen@postdoc.uoh.cl}}}
	\author[2,3]{Emilio Vilches\thanks{\texttt{emilio.vilches@uoh.cl}}}
	
	\affil[1]{XLIM Laboratory, University of Limoges, 123 Avenue Albert Thomas, 87060 Limoges CEDEX, France}
	\affil[2]{Instituto de Ciencias de la Ingeniería, Universidad de O’Higgins, Rancagua, Chile}
	\affil[3]{Centro de Modelamiento Matem\'atico (CNRS IRL2807),
		Universidad de Chile, Beauchef 851, Santiago, Chile}

	\date{\today}

	\maketitle
	
	\begin{abstract}
		We propose a penalty-based smoothing framework for convex nonsmooth functions with a supremum structure. The regularization yields a differentiable surrogate with controlled approximation error, a single-valued dual maximizer, and explicit gradient formulas. We then study an accelerated inertial dynamic with vanishing damping driven by a time-dependent regularized function whose parameter decreases to zero. Under mild integrability and boundedness conditions on the regularization schedule, we establish an accelerated $\mathcal{O}(t^{-2})$ decay estimate for the regularized residual and, in the regime $\alpha>3$, a sharper $o(t^{-2})$ decay together with weak convergence of trajectories to a minimizer of the original nonsmooth problem via an Opial-type argument. Applications to multiobjective optimization (through Chebyshev/max scalarization) and to distributionally robust optimization (via entropic regularization over ambiguity sets) illustrate the scope of the framework.
	\end{abstract}

	\section{Introduction}
	Pointwise supremum functions and convex-concave saddle formulations are ubiquitous in contemporary optimization \cite{nemirovski2004prox,nesterov2005smooth,MR4279933}. They arise, among others, in \emph{min-max} and robust models \cite{MR2546839,MR2066239}, in scalarizations of multiobjective optimization \cite{MR1784937,ehrgott2005multicriteria}, and in distributionally robust optimization (DRO), where one minimizes a worst-case expectation over an ambiguity set \cite{MR4362585}.  A recurring algorithmic difficulty is that supremum functions are typically nonsmooth even when the underlying data are smooth, which prevents the direct use of fast first-order techniques and complicates the analysis of accelerated dynamics.
	
	Let $\mathcal{H}$ be a real Hilbert space. In this paper, we study a smoothing mechanism and the associated second-order continuous-time dynamics for the problem 
	\begin{equation}\label{eq:intro_prob}
		\min_{x\in\mathcal{H}} \varphi(x):=\sup_{\lambda\in Q}\sum_{i=1}^m \lambda_i g_i(x),
	\end{equation}
	where $g_i \colon \mathcal{H}\to\mathbb{R}$ are convex functions, and $Q\subset\mathbb{R}^m_+$ is a nonempty compact convex set. The model \eqref{eq:intro_prob} is quite general and includes, among others, pointwise
	maxima of finitely many convex functions, norm- and gauge-type penalties, box-weighted hinge-like aggregates, as well as DRO functions when $Q$ encodes an ambiguity set over a finite collection of scenarios (see the
	examples below). However, the supremum structure in \eqref{eq:intro_prob} creates two related analytical difficulties.
	First, nonsmoothness and the potential lack of a uniquely active maximizer complicate variational analysis (subdifferential calculus, stability, sensitivity) and hinder algorithm design. Second,  when recast as a saddle problem, the primal-dual coupling calls for methods that simultaneously control
	feasibility and duality gaps while preserving computational efficiency.
	A central goal is therefore to exploit the \emph{structure} of supremum/saddle models to obtain tractable smooth approximations and first-order schemes (continuous or discrete) with provable convergence guarantees and, when possible, accelerated rates.
	
	We develop a framework that approaches \eqref{eq:intro_prob} through a smoothing/regularization lens.
	On the dual side, smoothing corresponds to adding a strongly convex regularizer in $\lambda$, which yields a single-valued selection of maximizers and a differentiable approximation $\lambda_{\mu}$ of $\varphi$ with (locally) Lipschitz gradient.
	On the primal side, this enables the use of gradient-based and inertial/accelerated dynamics on  $\varphi_\mu$, together with a principled regime $\mu\downarrow 0$ that recovers solutions of the original nonsmooth problem. Beyond computational benefits, this viewpoint also clarifies how worst-case models, multiobjective scalarizations, and DRO can be studied under a unified variational template.

	Since max-type scalarizations of vector problems fit directly into \eqref{eq:intro_prob}, it is natural to compare our results with the rapidly developing dynamical-systems literature for multiobjective optimization.
	Sonntag and Peitz introduced an inertial multiobjective gradient-like system with \emph{asymptotic vanishing damping} and proved (in the smooth convex setting) existence of solutions in finite dimensions, weak convergence of bounded trajectories to weakly Pareto optimal points, and an $\mathcal O(t^{-2})$ rate for function values measured via a merit function \cite{MR2066239}.
	In a complementary direction, they derived efficient discretizations and Nesterov-type accelerated algorithms for smooth convex multiobjective problems in Hilbert spaces, with weak convergence guarantees for the proposed inertial schemes and practical accelerated variants that reduce per-iteration subproblem costs \cite{sonntagpeitz2024fast}.
	More recently, Bo{\c t} and Sonntag proposed a second-order system that combines asymptotically vanishing damping with \emph{vanishing Tikhonov regularization applied componentwise}, establishing fast merit-function convergence and, in certain regimes, strong convergence (selection) toward a weak Pareto optimal point; this yields a mechanism to enforce strong convergence by regularization in the multiobjective setting \cite{BotSonntag2026JMAA}.
	Finally, Sonntag, Gebken, M{\"u}ller, Peitz, and Volkwein extended an efficient descent method for \emph{nonsmooth} multiobjective problems to general Hilbert spaces by computing approximations of the Clarke subdifferential and proving that every accumulation point of the generated sequence is Pareto critical under standard assumptions \cite{SonntagEtAl2024Nonsmooth}.
	
	Our contributions are complementary to these works in two main respects.
	First, we focus on the broader supremum/saddle class \eqref{eq:intro_prob}, which includes max-type scalarizations in multiobjective optimization but also robust and distributionally robust models, and can accommodate (in principle) continuous or infinite index sets through suitable convex-concave saddle representations. Second, instead of handling nonsmoothness via direct approximations of generalized derivatives (e.g., Clarke-type constructions), we exploit the supremum/saddle structure to build smooth surrogates through dual regularization, and we study the resulting primal (and, when convenient, primal-dual) first-order and inertial dynamics. This provides a unified route to convergence analysis that naturally interfaces with accelerated vanishing-damping dynamics (cf.~\cite{MR2066239,sonntagpeitz2024fast}) and with trajectory selection through vanishing regularization mechanisms (cf.~\cite{BotSonntag2026JMAA}), while remaining tailored to the supremum functions that are intrinsic to robust and distributionally robust optimization.
	
	Following the general philosophy of Nesterov-type smoothing \cite{nesterov2005smooth}, we introduce for $\mu>0$ the penalty-regularized function
	\begin{equation*}
		\varphi_\mu(x)
		:=\max_{\lambda\in Q}\left\{\sum_{i=1}^m \lambda_i g_i(x)-\mu\mathcal{D}(\lambda)\right\},
	\end{equation*}
	where $\mathcal{D}$ is nonnegative and $\sigma$-strongly convex on $Q$ with $\sup_{\lambda \in Q}\mathcal{D}(\lambda)<+\infty$.
	This yields (i) a \emph{controlled approximation from below} $\varphi_\mu\uparrow\varphi$ as $\mu\downarrow 0$ with an explicit uniform error bound of order $\mu$, and (ii) under mild assumptions a \emph{smooth} convex surrogate with locally Lipschitz gradient, admitting an envelope representation via the unique maximizer $\lambda^\mu(x)$.
	The framework covers classical smooth approximations of $\max$ functions such as log-sum-exp (entropic/KL penalty) and quadratic proximal penalties, and it makes explicit the characteristic $1/\mu$ degradation of the Lipschitz modulus of $\nabla\varphi_\mu$ as $\mu\downarrow 0$ \cite{nesterov2005smooth}, a key point for both complexity estimates and dynamical analyses. 
	
	The proposed regularization scheme enables the use of first- and second-order dynamical systems. In this paper, we focus on second-order inertial dynamics, as they capture accelerated regimes and are closely connected to Nesterov-type methods. For completeness, the analysis of the corresponding first-order (gradient-flow) dynamics is provided in Appendix~\ref{gradient}.

	This smoothing step enables an accelerated continuous-time treatment.
	Continuous-time inertial systems with vanishing damping of the form
	\[
	\ddot{x}(t)+\frac{\alpha}{t}\dot{x}(t)+\nabla \Phi(x(t))=0
	\]
	provide a principled model for accelerated first-order methods and arise, for instance, as ODE limits of Nesterov acceleration \cite{su2016differential}; in the smooth convex case, they have been analyzed in depth, including sharp $\mathcal{O}(t^{-2})$ function-value decay and (for $\alpha>3$) weak convergence of trajectories under appropriate assumptions \cite{attouch2018fast}.
	However, taking $\Phi=\varphi$ in \eqref{eq:intro_prob} is generally not viable due to nonsmoothness.
	Our central idea is therefore to couple acceleration with smoothing and let the smoothing parameter vanish along time:
	we consider the nonautonomous inertial system
	\begin{equation}\label{eq:intro_inertial}
		\ddot{x}(t)+\frac{\alpha}{t}\dot{x}(t)+\nabla \varphi_{\mu(t)}(x(t))=0,
	\end{equation}
	with $\alpha\ge 3$ and $\mu(t)\downarrow 0$.
	The time dependence introduces a genuine analytical difficulty: the natural Lyapunov functional inherits additional terms stemming from $\dot{\mu}(t)$, and the reference minimizer $x^{\ast} \in\operatorname{argmin}\varphi$ is, in general, \emph{not} a minimizer of $\varphi_{\mu(t)}$ for fixed $t$.
	We show nevertheless that, under mild integrability and boundedness conditions on $\mu(\cdot)$ and $\dot{\mu}(\cdot)$, one can recover accelerated decay rates at the level of the regularized values and ensure asymptotic consistency with the original nonsmooth problem.
	
	The main contributions of this paper can be summarized as follows:
	\begin{itemize}[leftmargin=2em]
		\item We develop a unified penalty-based smoothing framework for supremum functions  \eqref{eq:intro_prob} and establish its key properties (uniform approximation error, differentiability, and explicit formulas for $\nabla\varphi_\mu$ and $\partial_\mu\varphi_\mu$), emphasizing examples relevant to max-type functions, norm aggregates, and DRO.
		\item We derive local Lipschitz bounds for $\nabla\varphi_\mu$ that make explicit the tradeoff between smoothness and approximation accuracy (notably the $1/\mu$ scaling typical of Nesterov-type smoothing \cite{nesterov2005smooth}).
		
		\item For the inertial system \eqref{eq:intro_inertial}, we construct an energy functional adapted to the nonautonomous setting and prove, under explicit assumptions on $\mu(t)$, an accelerated $\mathcal{O}(t^{-2})$ estimate for the residual $|\varphi_{\mu(t)}(x(t))-\inf \varphi|$. Moreover, in the regime $\alpha>3$ we obtain the sharper decay $\varphi_{\mu(t)}(x(t))-\inf \varphi=o(t^{-2})$, and we establish weak convergence of $x(t)$ to a minimizer of $\varphi$ via an Opial-type argument as in \cite{attouch2018fast}.
		\item We instantiate the theory in two application domains: (a) multiobjective optimization via Chebyshev/max scalarization, relating minimizers of $\varphi$ to weak Pareto optima and comparing with accelerated multiobjective methods \cite{MR2066239,sonntagpeitz2024fast}; and (b) DRO with KL-type regularization over moment-based ambiguity sets, yielding tractable regularized costs and illustrating the influence of $\mu(\cdot)$ on the trajectories.
	\end{itemize}
	
	The paper is organized as follows. In Section~\ref{sec:prelim}, we introduce some mathematical preliminaries and develop the penalty-based smoothing framework for supremum functions, including its approximation and differentiability properties together with several representative examples. Section~\ref{sec:dynamics} analyzes the inertial dynamical system with vanishing damping driven by the time-dependent regularized function, proving an \(\mathcal{O}(t^{-2})\) decay estimate for the regularized function values and, for $\alpha>3$, additional integrability and weak convergence of the trajectories; a simple numerical illustration is also provided. Section~\ref{eq:appl} presents two application domains, namely multiobjective optimization and distributionally robust optimization (DRO), and reports numerical experiments that illustrate the predicted convergence behavior. The paper concludes with some remarks and directions for future research.

	\section{Mathematical Preliminaries and the Penalty Framework}\label{sec:prelim}

	\subsection{Mathematical preliminaries and notation}
	
	Throughout the paper, $\mathcal H$ denotes a real Hilbert space endowed with inner product
	$\langle\cdot,\cdot\rangle$ and associated norm $\|x\|:=\sqrt{\langle x,x\rangle}$.
	When $\mathcal H=\mathbb R^n$, we also use the $p$-norm
	$\|x\|_p:=\big(\sum_{i=1}^n |x_i|^p\big)^{1/p}$ for $p\ge 1$, with $\|x\|_\infty:=\max_{1\le i\le n}|x_i|$.
	For $m\in\mathbb N$, we denote by $\mathbbm 1\in\mathbb R^m$ the all-ones vector and by $\Delta_m:=\left\{p\in\mathbb R^m_+:\ \langle \mathbbm 1,p\rangle=1\right\}$ the $m$-dimensional probability simplex. For a nonempty set $Q\subset\mathcal H$, we define the distance function
	$\operatorname{dist}_Q(x):=\inf_{y\in Q}\|x-y\|$; if, in addition, $Q$ is closed and convex, we write
	$\operatorname{proj}_Q(x):=\arg\min_{y\in Q}\|x-y\|$ for the (single-valued) metric projection, so that
	$\operatorname{dist}_Q(x)=\|x-\operatorname{proj}_Q(x)\|$.
	
	For a proper function $f\colon \mathcal H\to\mathbb R\cup\{+\infty\}$, we use the indicator function of a set $Q$,
	$\iota_Q(x)=0$ if $x\in Q$ and $\iota_Q(x)=+\infty$ otherwise, and the Fenchel conjugate
	$f^\ast(y):=\sup_{x\in\mathcal H}\{\langle y,x\rangle-f(x)\}$.
	When $f$ is convex, its (convex-analytic) subdifferential at $x$ is
	\[
	\partial f(x):=\{v\in\mathcal H \colon \ f(y)\ge f(x)+\langle v,y-x\rangle\ \ \forall y\in\mathcal H\}.
	\]
	For a closed convex set $Q\subset\mathcal H$, we denote by $N_Q(x)$ the normal cone,
	\[
	N_Q(x):=\{v\in\mathcal H \colon \ \langle v,y-x\rangle\le 0\ \ \forall y\in Q\}\quad (x\in Q),
	\qquad N_Q(x):=\emptyset\quad (x\notin Q),
	\]
	and by $\operatorname{ri}(Q)$ its relative interior. We say that a function $\mathcal D \colon Q\to\mathbb R$ is
	$\sigma$-strongly convex on $Q$ if, for all $x,y\in Q$ and $\theta\in[0,1]$,
	\[
	\mathcal D(\theta x+(1-\theta)y)\le \theta \mathcal D(x)+(1-\theta)\mathcal D(y)
	-\frac{\sigma}{2}\theta(1-\theta)\|x-y\|^2.
	\]
	For differentiable $h \colon \mathcal H\to\mathbb R$ and a set $B\subset\mathcal H$, we say that $\nabla h$ is
	$L$-Lipschitz on $B$ if $\|\nabla h(x)-\nabla h(y)\|\le L\|x-y\|$ for all $x,y\in B$.
	
	When using entropic penalties on $\Delta_m$, we adopt the convention $0\log 0:=0$ and write, for a distribution $u=(u_1,\ldots,u_m)$,
	\[
	\operatorname{KL}(p\|u):=\sum_{i=1}^m p_i\log\!\Big(\frac{p_i}{u_i}\Big).
	\]
	Finally, for functions of time we use the standard asymptotic notation: $a(t)=\mathcal O(t^{-2})$ means that
	$|a(t)|\le C/t^2$ for all large $t$, while $a(t)=o(t^{-2})$ means $t^2 a(t)\to 0$ as $t\to\infty$; we also use
	$L^1(t_0,\infty)$ for Lebesgue integrability on $(t_0,\infty)$ and the notation $x_n\rightharpoonup x$ for weak
	convergence in $\mathcal H$. For completeness, we recall in Appendix~\ref{opial_Appendix} the continuous-time version of Opial's lemma (see \cite{opial1967weak}), which we use to conclude weak convergence of the trajectories.
	
	\subsection{Penalty Framework}
	Let $g_1,\ldots, g_m\colon \mathcal{H} \to \mathbb{R}$ be convex functions, and let $Q \subset \R^m_+$ be nonempty, convex, and compact. Let us consider the optimization problem
	\begin{equation}\label{eq:prob_max}
		\min_{x \in \mathcal{H}}\varphi(x):=\max_{\lambda \in Q} \left(  \sum_{i=1}^{m}\lambda_i g_i(x)\right).
	\end{equation} 
	We follow Nesterov's smoothing approach \cite{nesterov2005smooth} and define
	\begin{equation}\label{eq:Framework}
		\varphi_{\mu}(x):=\max_{\lambda\in Q}\left\{\sum_{i=1}^m \lambda_i g_i(x)-\mu \mathcal{D}(\lambda)\right\}.
	\end{equation}
	Here $\mu$ is a positive parameter and $\mathcal{D}(\cdot)$ is a nonnegative convex penalty function defined on $Q$ with $\mathcal{C}:=\sup_{\lambda \in Q}\mathcal{D}(\lambda)<\infty$.  When required, we will adopt the notation $g(x) = (g_1(x),\ldots,g_m(x))$. Notice that $\varphi$ is a convex function, and we will assume that $\operatorname{argmin}\varphi \neq \emptyset$. In this work, we will deal with the minimization problem over $\varphi(x)$ by using the auxiliary function $\varphi_\mu(x)$. We will refer to this function usually as the \textit{regularization} of $\varphi$. Regularity properties of this function can be directly obtained from its construction. 
	\begin{proposition}\label{p:properties}
		Let $\mathcal{H}$ be a real Hilbert space. Assume $g_1,\ldots, g_m\colon \mathcal{H} \to \mathbb{R}$ are convex, $Q\subset \mathbb{R}^m_+$ is a nonempty, compact and convex set, and $\mathcal{D}\colon Q \to \mathbb{R}$ is a $\sigma$-strongly convex function with $\inf_{\lambda\in Q}\mathcal{D}(\lambda)=0$ and $\mathcal{C}:=\sup_{\lambda\in Q}\mathcal{D}(\lambda)<+\infty$. Then, $\varphi_\mu$ satisfies the following properties
		\begin{enumerate}[label={\rm (\roman*)}]
			\item For all $\mu>0$ and all $x\in \mathcal{H}$, 
			\begin{equation}\label{eq:prop1}
				0\leq \varphi(x)-\varphi_{\mu}(x)\leq \mathcal{C}\mu. 
			\end{equation}
			\item For all $\mu>0$, the map $x\mapsto \varphi_{\mu}(x)$ is convex. Moreover, the maximizer 
			$$
			\lambda^{\mu}(x)=(\lambda_1^{\mu}(x),\ldots,\lambda_m^{\mu}(x))\in \displaystyle{\argmax_{\lambda\in Q}}\left\{\sum_{i=1}^m \lambda_i g_i(x)-\mu \mathcal{D}(\lambda)\right\} \textrm{ is unique. }
			$$
			If, in addition, each $g_i$ is Fr\'echet differentiable on $\mathcal{H}$, then $x\mapsto \varphi_{\mu}(x)$ is differentiable and
			\begin{equation*}
				\nabla \varphi_{\mu}(x)=\sum_{i=1}^m \lambda_{i}^{\mu}(x)\nabla g_i(x).
			\end{equation*}
			\item For every $x\in \mathcal{H}$, the map $\mu \mapsto \varphi_{\mu}(x)$ is differentiable and
			\begin{equation}\label{eq:prop3}
				\frac{d}{d\mu}\varphi_{\mu}(x)= \frac{1}{\mu} \left(\varphi_{\mu}(x)-\sum_{i=1}^m \lambda_i^{\mu}(x)g_i(x)\right)=-\mathcal{D}(\lambda^{\mu}(x))\leq 0.
			\end{equation}
		\end{enumerate}
	\end{proposition}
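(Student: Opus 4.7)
The plan is to verify the three items by leveraging the envelope structure of $\varphi_\mu$ together with the strong concavity of the inner objective in $\lambda$. Throughout, let $\Psi(x,\lambda):=\sum_{i=1}^m\lambda_i g_i(x)-\mu\mathcal{D}(\lambda)$, which is jointly continuous, convex in $x$ for each fixed $\lambda$, and $\mu\sigma$-strongly concave in $\lambda$ for each fixed $x$.

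For (i), I would use a direct squeeze: since $\mathcal{D}\ge 0$, for every $\lambda\in Q$ one has $\sum_i\lambda_i g_i(x)-\mu\mathcal{D}(\lambda)\le\sum_i\lambda_i g_i(x)$, which upon taking suprema over $\lambda\in Q$ yields $\varphi_\mu(x)\le\varphi(x)$; and since $\mathcal{D}(\lambda)\le\mathcal{C}$ for all $\lambda\in Q$, the reverse bound $\varphi_\mu(x)\ge\varphi(x)-\mu\mathcal{C}$ is obtained by taking suprema in the inequality $\Psi(x,\lambda)\ge\sum_i\lambda_i g_i(x)-\mu\mathcal{C}$.

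For (ii), convexity of $\varphi_\mu$ is immediate as a pointwise supremum of the convex maps $x\mapsto\Psi(x,\lambda)$. The uniqueness of $\lambda^\mu(x)$ follows from strict (in fact $\mu\sigma$-strong) concavity of $\Psi(x,\cdot)$ on the compact convex set $Q$. Under differentiability of each $g_i$, I would then invoke a Danskin-type theorem: $\nabla_x\Psi(x,\lambda)=\sum_i\lambda_i\nabla g_i(x)$ is continuous in $(x,\lambda)$, the argmax set is the singleton $\{\lambda^\mu(x)\}$, and Berge's maximum theorem ensures continuity of $x\mapsto\lambda^\mu(x)$. Combining this with the convex max-subdifferential formula yields $\partial\varphi_\mu(x)=\{\sum_i\lambda_i^\mu(x)\nabla g_i(x)\}$, so that $\varphi_\mu$ is Gâteaux differentiable at $x$ with the stated gradient. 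To upgrade to Fréchet differentiability, I would use that the $\mu\sigma$-strong concavity of $\Psi(x,\cdot)$ gives, via a standard first-order optimality argument, local Lipschitz dependence of $\lambda^\mu(\cdot)$ on $x$ when the $g_i$'s have locally Lipschitz gradients (and continuity otherwise), which is enough to make the gradient map continuous and hence the differentiability Fréchet.

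For (iii), the rightmost identity in \eqref{eq:prop3} is purely algebraic: substituting $\varphi_\mu(x)=\sum_i\lambda_i^\mu(x)g_i(x)-\mu\mathcal{D}(\lambda^\mu(x))$ into $\tfrac{1}{\mu}(\varphi_\mu(x)-\sum_i\lambda_i^\mu(x)g_i(x))$ yields $-\mathcal{D}(\lambda^\mu(x))\le 0$. The differentiability in $\mu$ and the formula $\frac{d}{d\mu}\varphi_\mu(x)=-\mathcal{D}(\lambda^\mu(x))$ would follow from a Danskin-type envelope theorem in the parameter $\mu$: $\mu\mapsto\varphi_\mu(x)$ is a pointwise supremum of affine-in-$\mu$ functions (hence convex and locally Lipschitz in $\mu$), the argmax $\lambda^\mu(x)$ is unique, and $\partial_\mu\Psi(x,\lambda)=-\mathcal{D}(\lambda)$, so the envelope formula gives the derivative.

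The main technical obstacle I anticipate is the differentiability step in (ii), specifically promoting the Gâteaux differentiability produced by the singleton-subdifferential argument to Fréchet differentiability in a general Hilbert space. The key lever for this is the uniform strong concavity of $\Psi(x,\cdot)$, which yields continuous (indeed locally Lipschitz, under mild regularity of the $g_i$) dependence of the selector $\lambda^\mu(x)$ on $x$; once $x\mapsto\nabla\varphi_\mu(x)$ is continuous, Fréchet differentiability of the convex function $\varphi_\mu$ follows. All other pieces reduce to routine envelope-theorem computations and uniform bounds on $\mathcal{D}$.
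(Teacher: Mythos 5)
Your proposal is correct and follows essentially the same route as the paper's proof: the same squeeze argument for (i), convexity as a pointwise supremum plus strong concavity for uniqueness in (ii), and the identical algebraic/envelope computation for (iii). The one place you go beyond the paper is in (ii), where the paper simply invokes ``a standard envelope argument'' while you spell out the Danskin/Berge reasoning and explicitly flag the Gâteaux-to-Fréchet upgrade via continuity of the gradient map; this is a legitimate subtlety (and your resolution is sound, since an everywhere Fréchet differentiable convex function on a Banach space has a norm-continuous gradient), but it is elaboration rather than a different approach.
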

	\begin{proof} Let $x\in\mathcal H$ and $\mu>0$. Since $Q$ is compact and the functions
		\[
		\lambda\mapsto \sum_{i=1}^m \lambda_i g_i(x)
		\quad\text{and}\quad
		\lambda\mapsto \sum_{i=1}^m \lambda_i g_i(x)-\mu\mathcal D(\lambda)
		\]
		are continuous on $Q$, the maxima defining $\varphi(x)$ and $\varphi_\mu(x)$ are attained.
		Since $\mathcal D(\lambda)\ge 0$ for all $\lambda\in Q$,
		\[
		\sum_{i=1}^m \lambda_i g_i(x)-\mu\mathcal D(\lambda)\le \sum_{i=1}^m \lambda_i g_i(x),
		\]
		and taking the maximum over $\lambda\in Q$ yields $\varphi_\mu(x)\le \varphi(x)$.
		On the other hand, let $\lambda^*(x)\in\arg\max_{\lambda\in Q}\sum_{i=1}^m\lambda_i g_i(x)$. Then
		\[
		\varphi_\mu(x)\ge \sum_{i=1}^m \lambda_i^*(x) g_i(x)-\mu\mathcal D(\lambda^*(x))
		= \varphi(x)-\mu\mathcal D(\lambda^*(x))
		\ge \varphi(x)-\mu\mathcal C,
		\]
		where $\mathcal C=\sup_{\lambda\in Q}\mathcal D(\lambda)$. This proves (i).
		
		For (ii), for each fixed $\lambda\in Q$, the map
		$x\mapsto \sum_{i=1}^m \lambda_i g_i(x)-\mu\mathcal D(\lambda)$ is convex as a nonnegative combination of convex functions,
		hence $\varphi_\mu$ is convex as a pointwise maximum of convex functions.
		Moreover, since $\mathcal D$ is $\sigma$-strongly convex, the map
		$\lambda\mapsto \sum_{i=1}^m \lambda_i g_i(x)-\mu\mathcal D(\lambda)$ is $\mu\sigma$-strongly concave on $Q$.
		Since $Q$ is compact and the objective is continuous in $\lambda$, a maximizer exists, and strong concavity yields
		its uniqueness; we denote it by $\lambda^\mu(x)$.
		If, in addition, each $g_i$ is Fr\'echet differentiable, then $\varphi_\mu$ is differentiable and
		\[
		\nabla\varphi_\mu(x)=\sum_{i=1}^m \lambda_i^\mu(x)\nabla g_i(x),
		\]
		by a standard envelope argument for parametric maximization, using the uniqueness of $\lambda^\mu(x)$.
		
		For (iii), for every fixed $x\in\mathcal H$, the map
		\[
		\mu\mapsto \varphi_\mu(x)=\max_{\lambda\in Q}\left\{\sum_{i=1}^m \lambda_i g_i(x)-\mu\mathcal D(\lambda)\right\}
		\]
		is differentiable and
		\[
		\frac{d}{d\mu}\varphi_\mu(x)=-\mathcal D(\lambda^\mu(x))\le 0,
		\]
		since the maximizer is unique and the dependence on $\mu$ is only through the term $-\mu\mathcal D(\lambda)$.
		Finally, using $\varphi_\mu(x)=\sum_{i=1}^m \lambda_i^\mu(x)g_i(x)-\mu\mathcal D(\lambda^\mu(x))$, we get
		\[
		\frac{d}{d\mu}\varphi_\mu(x)
		=\frac{1}{\mu}\Big(\varphi_\mu(x)-\sum_{i=1}^m \lambda_i^\mu(x)g_i(x)\Big)
		=-\mathcal D(\lambda^\mu(x)).
		\]

	\end{proof}
	\begin{remark} Let us observe that $\varphi_{\mu}(x)=\mu \phi^{\ast}\left(\frac{g(x)}{\mu}\right)$, where $\phi(\lambda):=\mathcal{D}(\lambda)+\iota_Q(\lambda)$, and $\phi^{\ast}$ stands for the Fenchel conjugate of $\phi$.
	\end{remark}
	\begin{remark}\label{r:minimums} The property \eqref{eq:prop1} indicates that $\varphi_\mu$ is an approximation from below to $\varphi$, which satisfies $\varphi_\mu \to \varphi$ when $\mu \to 0$. Moreover, if $\argmin \varphi_\mu \neq \emptyset$,  we obtain that 
		\[\min \varphi_\mu \leq \min \varphi \leq \min \varphi_\mu +  \mathcal{C}\mu.\]    
	\end{remark}
	The formulation of problem~\eqref{eq:prob_max}, together with the construction of the regularizer, covers a broad range of optimization problems (see Section~\ref{eq:appl} and the Appendix \ref{Ejemplos-1000}). 
	
	In addition to the properties of the regularizer stated in Proposition \ref{p:properties}, we can establish the local Lipschitz continuity of the gradient of the regularized function. The proof can be found in Appendix \ref{prueba-Lipschitz}.
	\begin{proposition}\label{p:Lipschitz} Assume that $\mathcal{D}$ is differentiable, and let $B\subset \mathcal{H}$ be a nonempty, closed, bounded and convex set. Assume that each $\nabla g_i$ is Lipschitz continuous on $B$ with constant $L_i$. Then, for every $x,y \in B$ and every $\mu>0$, 
		\[\Vert \nabla\varphi_\mu(x) - \nabla\varphi_\mu(y) \Vert  \leq \left( M_Q L_g+\dfrac{G_B^2}{\mu\sigma } \right) \Vert x-y \Vert,\]    
		where
		\[M_Q:= \sup_{\lambda \in Q} \Vert \lambda\Vert_1, \quad L_g:= \sum_{i=1}^m L_i, \quad  G_B =: \sqrt{\sum_{i=1}^{m}\sup_{z \in B} \Vert \nabla g_i(z)\Vert^2}.\]
	\end{proposition}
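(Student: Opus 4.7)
The plan is to use the envelope formula $\nabla\varphi_\mu(x)=\sum_i \lambda_i^\mu(x)\nabla g_i(x)$ from Proposition~\ref{p:properties}(ii) and split the difference $\nabla\varphi_\mu(x)-\nabla\varphi_\mu(y)$ into two pieces that will yield exactly the two terms in the claimed bound. Writing
\[
\nabla\varphi_\mu(x)-\nabla\varphi_\mu(y)=\sum_{i=1}^m \lambda_i^\mu(x)\bigl(\nabla g_i(x)-\nabla g_i(y)\bigr)+\sum_{i=1}^m \bigl(\lambda_i^\mu(x)-\lambda_i^\mu(y)\bigr)\nabla g_i(y),
\]
the first sum is handled by the Lipschitz hypothesis on each $\nabla g_i$ on $B$, together with $\lambda_i^\mu(x)\ge 0$ and $\sum_i \lambda_i^\mu(x)\le \|\lambda^\mu(x)\|_1\le M_Q$, giving the contribution $M_Q L_g\,\|x-y\|$.

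The heart of the argument is a stability estimate for $\lambda^\mu$: I would show
\[
\|\lambda^\mu(x)-\lambda^\mu(y)\|\le \frac{1}{\mu\sigma}\,\|g(x)-g(y)\|.
\]
For this, rewrite the maximization as $\lambda^\mu(x)=\arg\min_{\lambda\in Q}\{\mu\mathcal D(\lambda)-\langle g(x),\lambda\rangle\}$, which is $\mu\sigma$-strongly convex in $\lambda$ on $Q$. Applying the characteristic strong-convexity inequality $h_x(\lambda^\mu(y))\ge h_x(\lambda^\mu(x))+\tfrac{\mu\sigma}{2}\|\lambda^\mu(x)-\lambda^\mu(y)\|^2$ and its symmetric counterpart at $y$, then adding and using that the $\mu\mathcal D$ terms cancel, produces
\[
\langle g(x)-g(y),\lambda^\mu(x)-\lambda^\mu(y)\rangle \ge \mu\sigma\,\|\lambda^\mu(x)-\lambda^\mu(y)\|^2,
\]
and Cauchy--Schwarz yields the desired bound.

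To pass from $\|g(x)-g(y)\|$ to $\|x-y\|$, I would use that $B$ is convex and each $\nabla g_i$ is Lipschitz (hence bounded) on $B$. The mean value inequality along the segment $[x,y]\subset B$ gives $|g_i(x)-g_i(y)|\le \sup_{z\in B}\|\nabla g_i(z)\|\cdot\|x-y\|$, so summing squares produces $\|g(x)-g(y)\|\le G_B\,\|x-y\|$. A Cauchy--Schwarz-type estimate applied to the second sum in the splitting, namely $\bigl\|\sum_i \alpha_i v_i\bigr\|\le \|\alpha\|_2 (\sum_i\|v_i\|^2)^{1/2}$ with $\alpha=\lambda^\mu(x)-\lambda^\mu(y)$ and $v_i=\nabla g_i(y)$, together with the stability estimate, gives the second contribution $\frac{G_B^2}{\mu\sigma}\|x-y\|$.

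The main obstacle is the stability estimate for $\lambda^\mu$; everything else is a bookkeeping/aggregation step. A subtle point is to note that the hypotheses require only $\mathcal D$ to be strongly convex and differentiable on $Q$ (not necessarily smooth on all of $\mathcal H$), so the argument must be formulated in terms of restricted-to-$Q$ minimizers; the two-sided strong-convexity trick above avoids handling the normal cone $N_Q$ explicitly and therefore keeps the proof clean.
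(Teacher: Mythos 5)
Your proposal is correct and follows the same overall skeleton as the paper's proof: envelope formula, the same additive splitting of $\nabla\varphi_\mu(x)-\nabla\varphi_\mu(y)$, the stability bound $\|\lambda^\mu(x)-\lambda^\mu(y)\|\le \frac{1}{\mu\sigma}\|g(x)-g(y)\|$, the estimate $\|g(x)-g(y)\|\le G_B\|x-y\|$ by the mean value inequality, and the final Cauchy--Schwarz aggregation. The one genuine difference is how you derive the stability bound for $\lambda^\mu$. The paper writes the first-order optimality condition as the variational inequality $\langle \lambda-\lambda^\mu(x),\, g(x)-\mu\nabla\mathcal D(\lambda^\mu(x))\rangle\le 0$ for all $\lambda\in Q$, substitutes $\lambda=\lambda^\mu(y)$ and symmetrically at $y$, adds, and then uses the $\sigma$-strong monotonicity of $\nabla\mathcal D$; this is precisely where the differentiability hypothesis on $\mathcal D$ enters. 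You instead invoke the quadratic-growth inequality $h_x(\lambda)\ge h_x(\lambda^\mu(x))+\frac{\mu\sigma}{2}\|\lambda-\lambda^\mu(x)\|^2$ for the constrained minimizer of the $\mu\sigma$-strongly convex function $h_x(\lambda)=\mu\mathcal D(\lambda)-\langle g(x),\lambda\rangle$, evaluate it at $\lambda^\mu(y)$, do the same with the roles of $x,y$ swapped, and add. Both routes produce the inequality $\langle g(x)-g(y),\,\lambda^\mu(x)-\lambda^\mu(y)\rangle\ge \mu\sigma\|\lambda^\mu(x)-\lambda^\mu(y)\|^2$. Your derivation is marginally more general: it never uses $\nabla\mathcal D$, so it would apply even if $\mathcal D$ were merely $\sigma$-strongly convex and not differentiable (the differentiability hypothesis in the statement would then be unnecessary for this lemma). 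The paper's variational-inequality argument, on the other hand, makes explicit the first-order characterization of $\lambda^\mu$, which is reused elsewhere in the paper (e.g.\ in the DRO section). Everything else in your proposal matches the paper's reasoning.
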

	\begin{proof}
		The proof is given in Appendix \ref{prueba-Lipschitz}.
	\end{proof}
	
	\begin{remark}\label{remark-assumptions}
		To simplify the notation, we say that $\varphi$ satisfies the standing hypothesis (SH) whenever the assumptions of Propositions \ref{p:properties} and \ref{p:Lipschitz} are satisfied.
	\end{remark}

	\section{Inertial dynamics with vanishing damping}\label{sec:dynamics}
	In this section we analyze the well-posedness and the asymptotic behavior of  the nonautonomous inertial system
	\begin{equation}\label{eq:inertial}
		\ddot{x}(t) + \dfrac{\alpha}{t}\dot{x}(t) + \nabla \varphi_{\mu(t)}(x(t))=0,
	\end{equation}
	for $t\geq t_0 >0$ and $\alpha \geq 3$.  The system is a vanishing-damping inertial gradient flow driven by the time-dependent smooth approximation $\varphi_{\mu(t)}$ of the original nonsmooth objective $\varphi$. The dependence on $t$ through $\mu(t)$ is essential for asymptotic consistency with \eqref{eq:prob_max}, but it also raises specific analytical issues: the Lipschitz modulus of $\nabla\varphi_{\mu}$ typically deteriorates as $1/\mu$, and classical Lyapunov arguments must account for additional terms involving $\dot{\mu}(t)$. We address these points by first proving global well-posedness and then deriving a Lyapunov-type estimate that yields accelerated decay rates and convergence of trajectories under suitable assumptions on the regularization schedule. 
	
	\subsection{Well-posedness of the Inertial System}
	The existence of solutions to the above system, in the unpenalized case and for a function $\varphi$ with Lipschitz-continuous gradient, was established in the seminal work \cite{su2016differential}. In our setting, the argument is slightly different because the mapping $x \mapsto \nabla \varphi_{\mu(t)}(x)$ fails to be globally Lipschitz in general: for each fixed $t$ it is only locally Lipschitz, and it is locally bounded (see Proposition~\ref{p:Lipschitz}). Hence, given an arbitrary compact interval $[t_0,T]\subset \mathbb{R}_+$ we will first obtain a unique maximal solution, in the sense that it is defined on a maximal subinterval $[t_0,\tau_{\max})$ with $\tau_{\max}\le T$ and cannot be extended as a solution within $[t_0,T]$ beyond $\tau_{\max}$. In a second step, we will show that such solutions admit an extension up to $T$, and since $T>t_0$ is arbitrary, this yields a solution defined for all $t\ge t_0$. The proof is given in the appendix. Recall that the standing hypotheses (SH) were introduced in Remark \ref{remark-assumptions}.
	\begin{theorem}\label{prop:global}
		Assume that (SH) holds and assume that $\mu \colon [t_0,+\infty)\to(0,+\infty)$ is continuously differentiable and nonincreasing.  Then, for any initial conditions $x(t_0)=x_0\in\mathcal H$ and $\dot x(t_0)=y_0\in\mathcal H$, the problem \eqref{eq:inertial} admits a unique global solution $x\in C^{2}\big([t_0,\infty);\mathcal H\big)$.
	\end{theorem}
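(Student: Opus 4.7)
The plan is to rephrase \eqref{eq:inertial} as a first-order Cauchy problem in $\mathcal{H}\times\mathcal{H}$, invoke the classical Cauchy--Lipschitz (Picard--Lindel\"of) scheme on an arbitrary compact interval $[t_0,T]$, and then derive a global a priori bound that rules out finite-time blow-up. Setting $X=(x,y)$ with $y=\dot{x}$, the equation becomes $\dot X(t)=F(t,X(t))$ with
$$F(t,(x,y))=\bigl(y,\ -\tfrac{\alpha}{t}y-\nabla\varphi_{\mu(t)}(x)\bigr).$$
Since $\mu$ is continuous and nonincreasing, $\mu(t)\ge\mu(T)>0$ on $[t_0,T]$, so Proposition~\ref{p:Lipschitz} guarantees that $\nabla\varphi_{\mu(t)}$ is Lipschitz on every bounded set with a constant $M_QL_g+G_B^2/(\mu(T)\sigma)$ that is uniform in $t\in[t_0,T]$. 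Joint continuity of $F$ follows from continuity of $t\mapsto\mu(t)$ combined with continuity of $(\mu,x)\mapsto\lambda^{\mu}(x)$, itself a consequence of the strong concavity of the dual objective and a Berge-type argument. Cauchy--Lipschitz then yields a unique maximal $C^2$ solution on some $[t_0,\tau_{\max})$ with $\tau_{\max}\le T$.

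The crucial step is to build a Lyapunov functional that remains bounded on $[t_0,\tau_{\max})$. I would consider
$$E(t):=\tfrac{1}{2}\|\dot{x}(t)\|^2+\varphi_{\mu(t)}(x(t))-\inf\varphi,$$
which, by Proposition~\ref{p:properties}(i) and $\varphi\ge\inf\varphi$, satisfies $E(t)\ge-\mathcal{C}\mu(t_0)$. Differentiating $t\mapsto\varphi_{\mu(t)}(x(t))$ via the chain rule, using the envelope identity of Proposition~\ref{p:properties}(iii) and substituting $\ddot{x}$ from \eqref{eq:inertial}, the gradient terms cancel and one is left with
$$\dot{E}(t)=-\tfrac{\alpha}{t}\|\dot{x}(t)\|^2-\mathcal{D}(\lambda^{\mu(t)}(x(t)))\,\dot\mu(t)\le -\mathcal{C}\,\dot\mu(t),$$
because $\dot\mu\le 0$ and $0\le\mathcal{D}\le\mathcal{C}$. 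Integrating yields $E(t)\le E(t_0)+\mathcal{C}\bigl(\mu(t_0)-\mu(t)\bigr)\le E(t_0)+\mathcal{C}\mu(t_0)$ for every $t\in[t_0,\tau_{\max})$, so $\|\dot{x}(t)\|^2\le 2\bigl(E(t_0)+2\mathcal{C}\mu(t_0)\bigr)$ is uniformly bounded; a single further integration controls $\|x(t)\|$ on the same interval in terms of $\|x_0\|$ and $T-t_0$.

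With $x(t)$ and $\dot{x}(t)$ confined to an explicit bounded subset of $\mathcal{H}\times\mathcal{H}$ on $[t_0,\tau_{\max})$, the norm and the local Lipschitz constant of $F(t,\cdot)$ remain bounded there, so a standard continuation argument precludes $\tau_{\max}<T$ and forces $\tau_{\max}=T$. Since $T>t_0$ was arbitrary, this produces a unique global $C^2$ solution on $[t_0,\infty)$. I expect the main technical delicacy to be the rigorous differentiation of $t\mapsto\varphi_{\mu(t)}(x(t))$ along the trajectory, which combines the primal gradient formula of Proposition~\ref{p:properties}(ii) with the $\mu$-envelope derivative of Proposition~\ref{p:properties}(iii) and requires continuity of $t\mapsto\lambda^{\mu(t)}(x(t))$; the same continuity underlies the joint continuity of $F$ used in the local existence step, after which everything reduces to ODE continuation together with the Lipschitz and boundedness properties already established in Propositions~\ref{p:properties} and~\ref{p:Lipschitz}.
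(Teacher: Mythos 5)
Your proof is correct, and it takes a genuinely different a priori estimate than the paper's, while following the same overall scaffold (first-order reformulation in $\mathcal H\times\mathcal H$, local Cauchy--Lipschitz existence, a priori bound, continuation). The paper bounds $x_T$ and $\dot x_T$ on the maximal interval by reusing the \emph{acceleration} Lyapunov functional
$\mathcal E_T(t)=\tfrac12\|v_T(t)\|^2+\tfrac{t^2}{(\alpha-1)^2}\big(\varphi_{\mu(t)}(x_T(t))-\inf\varphi\big)$ with $v_T(t)=x_T(t)-x^*+\tfrac{t}{\alpha-1}\dot x_T(t)$, derives $\dot{\mathcal E}_T(t)\le \tfrac{\mathcal C(\alpha-3)}{(\alpha-1)^2}t\mu(t)+\tfrac{\mathcal C}{(\alpha-1)^2}t^2|\dot\mu(t)|$, integrates on the compact interval (where all perturbation terms are trivially integrable), and then extracts first a bound on $v_T$ and next, via a Gr\"onwall-type argument, the sharper estimate $\|\dot x_T(t)\|\le C_T/t$. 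You instead use the elementary mechanical energy $E(t)=\tfrac12\|\dot x(t)\|^2+\varphi_{\mu(t)}(x(t))-\inf\varphi$, for which the gradient terms cancel exactly and $\dot E(t)=-\tfrac{\alpha}{t}\|\dot x(t)\|^2-\dot\mu(t)\mathcal D(\lambda^{\mu(t)}(x(t)))\le\mathcal C|\dot\mu(t)|$; combining this with the lower bound $\varphi_{\mu(t)}(x(t))-\inf\varphi\ge-\mathcal C\mu(t)$ gives a uniform bound on $\|\dot x\|$, hence on $\|x\|$ over any compact interval, and the continuation argument closes. Your route is shorter, does not require picking a reference minimizer $x^*$, needs no $\alpha\ge 3$ (it works for every $\alpha>0$), and isolates the well-posedness issue from the rate machinery; the paper's route is slightly heavier here but buys the decaying bound $\|\dot x_T(t)\|\le C_T/t$ as a byproduct and reuses exactly the Lyapunov structure that drives the subsequent asymptotic analysis, so it costs nothing in the larger development. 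One small point you already flag, and which both proofs share: the chain-rule differentiation of $t\mapsto\varphi_{\mu(t)}(x(t))$ implicitly uses joint differentiability in $(\mu,x)$, not merely the separate statements of Proposition~\ref{p:properties}(ii)--(iii); this is a standard envelope-theorem refinement that should be (and in the paper tacitly is) accepted as part of the standing hypotheses.
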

	\begin{proof}
		The proof is given in Appendix \ref{thm3.1}.
	\end{proof}
	\subsection{Asymptotic Convergence}
	Next, we establish convergence properties of the solutions of \eqref{eq:inertial} toward a minimizer of the original problem \eqref{eq:prob_max}. Although \eqref{eq:inertial} is closely related to accelerated inertial gradient flows studied in, e.g., \cite{su2016differential,attouch2018fast,attouch2019convergence,Attouch2019book,attouch2014dynamical,may2017asymptotic,shi2022understanding}, the presence of a time-dependent vanishing regularization introduces additional difficulties. In particular, for fixed $\mu>0$, trajectories converge to a minimizer of $\varphi_\mu$, which generally does not coincide with a minimizer of $\varphi$ (see Remark~\ref{r:minimums}). Therefore, one must let $\mu=\mu(t)\downarrow 0$ along time, and the decay of $\mu(t)$ must be controlled.
	
	Fix $x^{\ast} \in \argmin \varphi$ (a solution of \eqref{eq:prob_max}). Consider the energy functional 
	$$
	\mathcal{E}(t):=\frac{1}{2}\Vert v(t)\Vert^2+Z(t),
	$$
	where 
	\begin{equation}\label{eq:v_and_Z}
		v(t):=x(t)-x^{\ast}+\frac{t}{\alpha-1}\dot{x}(t)\quad \textrm{ and } \quad Z(t):=\frac{t^2}{(\alpha-1)^2}(\varphi_{\mu(t)}(x(t))-\inf\varphi). 
	\end{equation}
	Since $x^{\ast}$ is not necessarily a minimizer of $\varphi_{\mu(t)}$ (see Remark~\ref{r:minimums}), the term $Z(t)$ is not necessarily positive. Despite the fact that the functional is not positive, one can still perform a Lyapunov-like analysis. Our main results rely on the fact that $\mathcal{E}$ can be bounded in terms of $\mu(t)$ and $\dot{\mu}(t)$. In particular, from the definition together with \eqref{eq:prop1}, we obtain the lower bound
	\begin{equation}\label{eq:lbound_E}
		\mathcal{E}(t)\geq \frac{t^2}{(\alpha-1)^2}(\varphi_{\mu(t)}(x(t))-\inf\varphi)\geq -\frac{\mathcal{C}}{(\alpha-1)^2}t^2\mu(t).
	\end{equation}
	In a standard Lyapunov analysis, an upper bound on the functional is obtained by showing that it is non-increasing. In our setting, such an argument is not available; however, we can derive an upper bound on its time derivative, which in turn yields convergence rates. 
	
	\begin{proposition}
		Let $\mu\colon [t_0,+\infty)\to(0,+\infty)$ be nonincreasing and continuously differentiable, and let $x(\cdot)$ be a solution of \eqref{eq:inertial} with $\alpha\ge 3$. Then, for all $t\ge t_0$,
		\begin{equation}\label{eq:energy_deriv}
			\dot{\mathcal{E}}(t) \leq \frac{\mathcal{C}(\alpha-3)}{(\alpha-1)^2}t\mu(t)+\frac{\mathcal{C}t^2}{(\alpha-1)^2}\vert \dot{\mu}(t)\vert .  
		\end{equation}
	\end{proposition}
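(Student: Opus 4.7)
The plan is to compute $\dot{\mathcal{E}}(t)$ directly from its definition and the inertial equation, and then to exploit convexity of $\varphi_{\mu(t)}$ together with the one-sided approximation bound in Proposition \ref{p:properties}(i).

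First, I differentiate $v(t)=x(t)-x^{\ast}+\tfrac{t}{\alpha-1}\dot x(t)$ to get $\dot v(t)=\tfrac{\alpha}{\alpha-1}\dot x(t)+\tfrac{t}{\alpha-1}\ddot x(t)$, and substitute $\ddot x(t)=-\tfrac{\alpha}{t}\dot x(t)-\nabla\varphi_{\mu(t)}(x(t))$ from \eqref{eq:inertial} to obtain $\dot v(t)=-\tfrac{t}{\alpha-1}\nabla\varphi_{\mu(t)}(x(t))$. Expanding $\langle v(t),\dot v(t)\rangle$ with $v(t)$ written out, this gives
\[
\tfrac{d}{dt}\bigl(\tfrac12\|v(t)\|^{2}\bigr)
=-\tfrac{t}{\alpha-1}\langle\nabla\varphi_{\mu(t)}(x(t)),x(t)-x^{\ast}\rangle
-\tfrac{t^{2}}{(\alpha-1)^{2}}\langle\nabla\varphi_{\mu(t)}(x(t)),\dot x(t)\rangle.
\]

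Next, I differentiate $Z(t)$ using the chain rule on the two-variable map $(t,x)\mapsto \varphi_{\mu(t)}(x)$ and invoke Proposition \ref{p:properties}(ii)--(iii):
\[
\tfrac{d}{dt}\varphi_{\mu(t)}(x(t))
=\langle\nabla\varphi_{\mu(t)}(x(t)),\dot x(t)\rangle
-\mathcal D(\lambda^{\mu(t)}(x(t)))\,\dot\mu(t).
\]
Adding $\dot Z(t)$ to the previous expression, the cross terms in $\langle \nabla\varphi_{\mu(t)}(x(t)),\dot x(t)\rangle$ cancel exactly, leaving
\[
\dot{\mathcal E}(t)=-\tfrac{t}{\alpha-1}\langle\nabla\varphi_{\mu(t)}(x(t)),x(t)-x^{\ast}\rangle
+\tfrac{2t}{(\alpha-1)^{2}}(\varphi_{\mu(t)}(x(t))-\inf\varphi)
-\tfrac{t^{2}}{(\alpha-1)^{2}}\mathcal D(\lambda^{\mu(t)}(x(t)))\,\dot\mu(t).
\]

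Now I use convexity of $\varphi_{\mu(t)}$ to write $\langle\nabla\varphi_{\mu(t)}(x(t)),x(t)-x^{\ast}\rangle\ge \varphi_{\mu(t)}(x(t))-\varphi_{\mu(t)}(x^{\ast})$, combined with the key inequality $\varphi_{\mu(t)}(x^{\ast})\le\varphi(x^{\ast})=\inf\varphi$ from \eqref{eq:prop1}, so that the first term is bounded above by $-\tfrac{t}{\alpha-1}(\varphi_{\mu(t)}(x(t))-\inf\varphi)$. Collecting the coefficients of $\varphi_{\mu(t)}(x(t))-\inf\varphi$ produces precisely the factor $-\tfrac{t(\alpha-3)}{(\alpha-1)^{2}}$.

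The main subtlety, and what I expect to be the pivotal step, is that since $\varphi_\mu$ approximates $\varphi$ \emph{from below}, the quantity $\varphi_{\mu(t)}(x(t))-\inf\varphi$ need not be nonnegative, so the sign of this term alone does not help when $\alpha>3$. To close the estimate, I invoke \eqref{eq:prop1} again in the form $\varphi_{\mu(t)}(x(t))-\inf\varphi\ge \varphi_{\mu(t)}(x(t))-\varphi(x(t))\ge -\mathcal C\mu(t)$, which converts the first term into the upper bound $\tfrac{\mathcal C(\alpha-3)}{(\alpha-1)^{2}}t\mu(t)$. For the last term, the monotonicity hypothesis on $\mu$ gives $\dot\mu(t)\le 0$, hence $-\dot\mu(t)=|\dot\mu(t)|$, and $\mathcal D(\lambda^{\mu(t)}(x(t)))\le\mathcal C$ by definition of $\mathcal C$, yielding the bound $\tfrac{\mathcal C t^{2}}{(\alpha-1)^{2}}|\dot\mu(t)|$. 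Summing these two contributions gives \eqref{eq:energy_deriv}.
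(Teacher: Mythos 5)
Your proof is correct and follows essentially the same route as the paper's: compute $\dot v$ from the dynamics, observe the exact cancellation of the $\langle\nabla\varphi_{\mu(t)}(x),\dot x\rangle$ cross terms between $\tfrac{d}{dt}\tfrac12\|v\|^2$ and $\dot Z$, apply convexity together with $\varphi_{\mu(t)}(x^\ast)\le\inf\varphi$ to collect the coefficient $-\tfrac{t(\alpha-3)}{(\alpha-1)^2}$, and then use the two-sided bound $\varphi_{\mu(t)}(x(t))-\inf\varphi\ge-\mathcal C\mu(t)$ and $\mathcal D\le\mathcal C$, $\dot\mu\le 0$ to close the estimate. You correctly flag the key subtlety (the sign of $\varphi_{\mu(t)}(x(t))-\inf\varphi$ and the role of $\alpha\ge 3$), which the paper handles in exactly the same way.
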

	\begin{proof} First, by using \eqref{eq:inertial}, we obtain that 
		\begin{equation*}
			\begin{aligned}
				\langle v(t),\dot{v}(t)\rangle =-\frac{t}{\alpha-1}\langle v(t),\nabla \varphi_{\mu(t)}(x(t))\rangle.      
			\end{aligned}
		\end{equation*}
		On the other hand, using \eqref{eq:prop3} and the chain rule, we have
		\begin{equation}\label{eq:deriv_varphi_mu}
			\frac{d}{dt}\varphi_{\mu(t)}(x(t))=\langle\nabla\varphi_{\mu(t)}(x(t)),\dot x(t)\rangle+\dot\mu(t)\partial_\mu\varphi_{\mu(t)}(x(t)) = -\dot{\mu}(t)\mathcal{D}(\lambda^{\mu(t)}(x(t))) +\langle \nabla \varphi_{\mu(t)}(x(t)),\dot{x}(t)\rangle     
		\end{equation}
		Then, 
		\[\dot{Z}(t) =\frac{t^2}{(\alpha-1)^2}\left[-\dot{\mu}(t)\mathcal{D}(\lambda^{\mu(t)}(x(t))) +\langle \nabla \varphi_{\mu(t)}(x(t)),\dot{x}(t)\rangle \right]
		+\frac{2t}{(\alpha-1)^2}(\varphi_{\mu(t)}(x(t))-\inf\varphi)\]
		Therefore, 
		\begin{equation*}
			\begin{aligned}
				\dot{\mathcal{E}}(t)&=\langle v(t),\dot{v}(t)\rangle+\dot{Z}(t)\\
				&=-\frac{t}{\alpha-1}\langle x(t)-x^{\ast},\nabla \varphi_{\mu(t)}(x(t))\rangle +\frac{2t}{(\alpha-1)^2}(\varphi_{\mu(t)}(x(t))-\inf\varphi)-\frac{t^2}{(\alpha-1)^2}\dot{\mu}(t)\mathcal{D}(\lambda^{\mu(t)}(x(t)))
			\end{aligned}
		\end{equation*}
		Convexity of $x\mapsto \varphi_{\mu(t)}(x)$ yields 
		$$
		\langle x(t)-x^{\ast},\nabla \varphi_{\mu(t)}(x(t))\rangle \geq \varphi_{\mu(t)}(x(t))-\varphi_{\mu(t)}(x^{\ast})\geq \varphi_{\mu(t)}(x(t))-\inf\varphi.
		$$
		Using the previous and recalling that $\mathcal{C} = \sup_{\lambda \in Q}\mathcal{D}(\lambda)$ we obtain the upper bound
		\begin{equation}\label{eq:energy_alpha3}
			\dot{\mathcal{E}}(t) \leq   -\frac{(\alpha-3)}{(\alpha-1)^2}t(\varphi_{\mu(t)}(x(t))-\inf \varphi)-\mathcal{C}\frac{t^2}{(\alpha-1)^2}\dot{\mu}(t)
		\end{equation}
		Since $x^*\in \argmin \varphi$ we have that, for all $t\geq 0$, 
		\begin{equation*}
			\varphi_{\mu(t)}(x(t)) - \inf \varphi \geq \varphi_{\mu(t)}(x(t)) - \varphi(x(t)) \geq -\mathcal{C}\mu(t). 
		\end{equation*}
		Thus, we obtain that 
		\begin{equation*}
			\dot{\mathcal{E}}(t) \leq \frac{\mathcal{C}(\alpha-3)}{(\alpha-1)^2}t\mu(t)+\frac{\mathcal{C}t^2}{(\alpha-1)^2}\vert \dot{\mu}(t)\vert.
		\end{equation*}   
	\end{proof}
	The estimate \eqref{eq:energy_deriv} does not imply that $\mathcal{E}$ is nonincreasing, but it still allows one to derive accelerated decay for the regularized function values and, for $\alpha>3$, additional integrability and convergence properties. For that purpose, let us lighten the notation by defining
	\[\zeta(t) = \varphi_{\mu(t)}(x(t)) - \inf \varphi.\]
	To guarantee fast convergence rates, we need to impose conditions on the decay of $\mu(t)$. 
	{\begin{assumption}\label{as:mu_cont} Consider $\mu \colon [t_0,+\infty) \mapsto (0,+\infty)$ be continuously differentiable and nonincreasing, and assume:
			\begin{enumerate}[label={\rm (\roman*)}]
				\item $t\mu(t) \in L^1(t_0,\infty)$; 
				\item $t^2|\dot{\mu}(t)| \in L^1(t_0,\infty)$.
			\end{enumerate}    
	\end{assumption}}
	\begin{remark}\label{remark:mus} About the assumptions over $\mu(t)$, we can state the following: 
		\begin{itemize} 
			\item  The schedule $\mu(t) = t^{-(2+\delta)}$ with $\delta>0$ satisfies Assumption~\ref{as:mu_cont}.
			\item  {Notice that condition (i) implies that $\mu(t) \to 0$. Combined with condition (ii), this implies that $\mu(t)  = o(1/t^2)$. A short proof of this statement is provided in Appendix \ref{ap:ord_t2}.} 
			\item The conditions imposed on the decay of $\mu(t)$ have a clear dynamical interpretation. The first assumption prevents $\mu(t)$ from vanishing too slowly. Also, the assumptions guarantee that the error terms arising in the estimate of $\dot{\mathcal{E}}(t)$ are integrable in time, which allows one to obtain uniform upper bounds on $\mathcal{E}$ and to derive convergence rates.  
		\end{itemize}
	\end{remark}
	\begin{theorem}\label{t:rates_inertial} Let $x(\cdot)$ be a solution of \eqref{eq:inertial} with $\alpha\geq 3$, and let $\mu(\cdot)$ satisfy Assumption~\ref{as:mu_cont}.
		Then $v(t)$ is bounded and $\lim_{t\to\infty}\mathcal{E}(t)$ exists. Moreover, the following assertion holds:
		\begin{enumerate}[label={\rm (\roman*)}]
			\item The value gap satisfies  
			\[|\varphi_{\mu(t)}(x(t)) -\inf \varphi|= \mathcal{O}(1/t^2).\]
			\item If $\alpha>3$, then 
			\begin{equation}\label{eq:values_L1}
				t\,\vert  \varphi_{\mu(t)}(x(t)) - \inf \varphi\vert \in L^1(t_0,+\infty).  
			\end{equation}
			\item The (unsmoothed) value gap satisfies
			\[  \varphi(x(t)) - \inf \varphi = \mathcal{O}(1/t^2).\]
		\end{enumerate}
	\end{theorem}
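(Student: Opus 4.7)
The plan is to combine the two bounds already established for the energy functional---the pointwise lower bound \eqref{eq:lbound_E} and the derivative estimate \eqref{eq:energy_deriv}---with the integrability conditions in Assumption~\ref{as:mu_cont}. Integrating \eqref{eq:energy_deriv} from $t_0$ to $t$, both terms on the right-hand side have finite integrals on $[t_0,\infty)$ by Assumption~\ref{as:mu_cont}, so $\mathcal{E}(t) \leq M$ for a constant $M$ independent of $t$. Since $\mu(t)=o(1/t^{2})$ by Remark~\ref{remark:mus}, the lower bound \eqref{eq:lbound_E} shows that $\mathcal{E}$ is also bounded below. Writing $\tfrac12\|v(t)\|^{2} = \mathcal{E}(t) - Z(t)$ and invoking $Z(t) \geq -\mathcal{C} t^{2}\mu(t)/(\alpha-1)^{2}$ yields boundedness of $v(t)$; conversely, $Z(t) \leq M$ gives $\zeta(t) \leq (\alpha-1)^{2}M/t^{2}$, and Proposition~\ref{p:properties}(i) provides $\zeta(t) \geq -\mathcal{C}\mu(t) = o(1/t^{2})$. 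Together these inequalities close item (i).

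For the existence of $\lim_{t\to\infty}\mathcal{E}(t)$, I set $h(t) := \mathcal{E}(t) - \int_{t_0}^{t} F(s)\,ds$, where $F$ denotes the right-hand side of \eqref{eq:energy_deriv}. By construction $\dot h \leq 0$, so $h$ is nonincreasing; since $\mathcal{E}$ is bounded below and $\int_{t_0}^{\infty} F(s)\,ds < \infty$, the function $h$ is bounded below and hence converges. As $\int_{t_0}^{t} F$ also admits a limit, so does $\mathcal{E}(t)$.

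For (ii), I avoid the crude majorization used to pass from \eqref{eq:energy_alpha3} to \eqref{eq:energy_deriv} and work directly with \eqref{eq:energy_alpha3}. Since $\dot\mu \leq 0$, it rearranges to
\[
\frac{\alpha-3}{(\alpha-1)^{2}}\,t\,\zeta(t) \;\leq\; -\dot{\mathcal{E}}(t) + \frac{\mathcal{C}}{(\alpha-1)^{2}}\,t^{2}|\dot\mu(t)|.
\]
Integrating on $[t_0,T]$, using that $\mathcal{E}(T)$ converges and Assumption~\ref{as:mu_cont}(ii), we deduce (for $\alpha>3$) that $\int_{t_0}^{\infty} t\,\zeta(t)\,dt < +\infty$. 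Splitting $\zeta = \zeta^{+} - \zeta^{-}$, the bound $\zeta^{-}(t) \leq \mathcal{C}\mu(t)$ together with Assumption~\ref{as:mu_cont}(i) gives $t\,\zeta^{-} \in L^{1}(t_0,\infty)$; hence $t\,\zeta^{+} \in L^{1}(t_0,\infty)$ as well, which is \eqref{eq:values_L1}.

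Item (iii) follows directly from Proposition~\ref{p:properties}(i), which gives the sandwich $0 \leq \varphi(x(t))-\inf\varphi \leq \zeta(t)+\mathcal{C}\mu(t)$; by (i) and Remark~\ref{remark:mus} both terms on the right are $\mathcal{O}(1/t^{2})$. The main delicate point throughout the proof is that $\mathcal{E}$ is neither nonnegative nor monotone, because $x^{\ast}$ need not minimize $\varphi_{\mu(t)}$ and the nonautonomous smoothing contributes an unsigned $\dot\mu$-term to $\dot{\mathcal{E}}$; Assumption~\ref{as:mu_cont} is precisely the condition that renders the two resulting error sources integrable in time, so that the standard Lyapunov bookkeeping can be salvaged despite the loss of signs.
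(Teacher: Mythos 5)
Your argument is correct and follows essentially the same route as the paper: integrate the derivative estimate \eqref{eq:energy_deriv} against the lower bound \eqref{eq:lbound_E} to get boundedness of $\mathcal{E}$ and hence of $v$ and the $\mathcal{O}(t^{-2})$ rate, use the monotone-plus-integrable decomposition of $\mathcal{E}$ to get its limit, and for (ii) integrate the tighter inequality \eqref{eq:energy_alpha3} and absorb the negative part of $\zeta$ via the $\mathcal{C}\mu$ bound. The only cosmetic differences are that you keep the sharp rearrangement of \eqref{eq:energy_alpha3} (the paper's \eqref{eq:ii_key_blue_compact} carries an extra harmless $\mathcal{C}t\mu(t)$ term) and you phrase the sign bookkeeping via $\zeta=\zeta^{+}-\zeta^{-}$ rather than the paper's $|\zeta|\le\zeta+2\mathcal{C}\mu$, which are equivalent.
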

	\begin{proof}
		To prove (i), let us integrate \eqref{eq:energy_deriv} over $[t_0,t]$, we obtain
		\[ \mathcal{E}(t) \leq \mathcal{E}(t_0) + \mathcal{C}\frac{(\alpha-3)}{(\alpha-1)^2}\int_{t_0}^{t}s\mu(s)\,ds+\frac{\mathcal{C}}{(\alpha-1)^2}\int_{t_0}^{t}s^2\vert \dot{\mu}(s)\vert \,ds.\]
		Combining the previous with the lower bound \eqref{eq:lbound_E} we obtain 
		\[    -\frac{\mathcal{C}}{(\alpha-1)^2}t^2\mu(t) \leq \frac{t^2}{(\alpha-1)^2}\zeta(t)   \leq \mathcal{E}(t_0) + \mathcal{C}\frac{(\alpha-3)}{(\alpha-1)^2}\int_{t_0}^{t}s\mu(s)\,ds+\frac{\mathcal{C}}{(\alpha-1)^2}\int_{t_0}^{t}s^2\vert \dot{\mu}(s)\vert \,ds.\]
		{Notice that the left hand side is bounded since $\mu(t) = o(1/t^2)$ (see Remark \ref{remark:mus})}. The above inequalities, combined with Assumption \ref{as:mu_cont} on $\mu(t)$, yield
		\[|\varphi_{\mu(t)}(x(t)) - \inf \varphi| \leq \mathcal{O}(1/t^2).\]
		Let us prove (ii). Integrating \eqref{eq:energy_deriv} on $[t_0,t]$ and using Assumption~\ref{as:mu_cont} shows that the perturbation terms are integrable. Together with the lower bound \eqref{eq:lbound_E}, this yields that $\mathcal E$ is bounded from below on $[t_0,+\infty)$ and that $\dot{\mathcal E}_+\in L^1(t_0,+\infty)$. In particular, $\lim_{t\to\infty}\mathcal E(t)$ exists and is finite. Moreover, by the definition of $\mathcal E$, the boundedness of $\mathcal E$ implies the boundedness of $v(t)$. 
		
		Assume now that $\alpha>3$. From \eqref{eq:energy_alpha3}, after rearrangement, we have for all $t\ge t_0$,
		\begin{equation}\label{eq:ii_key_blue_compact}
			\frac{\alpha-3}{(\alpha-1)^2}\,t\,\zeta(t)
			\;\le\;
			-\dot{\mathcal E}(t)\;+\;\mathcal C\,t\,\mu(t)\;+\;\frac{\mathcal C}{(\alpha-1)^2}\,t^2|\dot\mu(t)|.
		\end{equation}
		On the other hand, \eqref{eq:lbound_E} gives $\zeta(t)\ge -\mathcal C\,\mu(t)$, hence $|\zeta(t)|\le \zeta(t)+2\mathcal C\,\mu(t)$ and therefore
		\[
		t|\zeta(t)|
		\;\le\;
		t\,\zeta(t)\;+\;2\mathcal C\,t\,\mu(t).
		\]
		Combining this with \eqref{eq:ii_key_blue_compact} yields
		\[
		t|\zeta(t)|\;\le\;\frac{(\alpha-1)^2}{\alpha-3}\Big(-\dot{\mathcal E}(t)+\mathcal C\,t\,\mu(t)+\frac{\mathcal C}{(\alpha-1)^2}t^2|\dot\mu(t)|\Big)+2\mathcal C\,t\,\mu(t).
		\]
		Integrating over $[t_0,t]$ and using $\int_{t_0}^t-\dot{\mathcal E}(s)\,ds=\mathcal E(t_0)-\mathcal E(t)$, we obtain
		\[
		\int_{t_0}^{t} s|\zeta(s)|\,ds
		\;\le\;
		\frac{(\alpha-1)^2}{\alpha-3}\big(\mathcal E(t_0)-\mathcal E(t)\big)
		+\widetilde{\mathcal C}\int_{t_0}^{t} s\,\mu(s)\,ds
		+\widetilde{\mathcal C}\int_{t_0}^{t} s^2|\dot\mu(s)|\,ds,
		\]
		for some $\widetilde{\mathcal C}>0$. Letting $t\to+\infty$, the first term is bounded since $\mathcal E$ has a finite limit, and the last two terms are finite by Assumption~\ref{as:mu_cont}. Hence
		\[
		\int_{t_0}^{+\infty} t\,\big|\varphi_{\mu(t)}(x(t))-\inf\varphi\big|\,dt<+\infty,
		\]
		which is \eqref{eq:values_L1}.
		
		To prove (iii), we can combine Theorem \ref{t:rates_inertial} with inequality \eqref{eq:prop1} to obtain that the solution trajectories of \eqref{eq:inertial} with $\alpha \geq 3$ satisfy
		\[ \varphi(x(t)) - \inf \varphi \leq \mathcal{C}\mu(t) + \dfrac{K}{t^2},\]
		where $K$ is the positive constant given by assertion~(i). 
	\end{proof}
	
	We now proceed to study further properties of the trajectories and their convergence.

		\begin{proposition}\label{p:estimations_traj} 
			Let $x(\cdot)$ be a solution of \eqref{eq:inertial} with $\alpha\geq 3$, and let $\mu(\cdot)$ satisfy Assumption~\ref{as:mu_cont}. Then, 
			\[
			\Vert \dot{x}(t) \Vert = \mathcal{O}\!\left(\frac{1}{t}\right).
			\]
			Moreover, if $\alpha>3$, then 
			\begin{equation}\label{eq:int_t_normdotx}
				\int_{t_0}^{+\infty} s\Vert \dot{x}(s)\Vert^2\,ds < +\infty,  
			\end{equation}
			and, for every $x^{\ast}\in \operatorname{argmin}\varphi$, the limit $\lim_{t\to \infty}\Vert x(t) - x^*\Vert$ exists.
		\end{proposition}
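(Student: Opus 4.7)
The plan is to prove the three assertions sequentially, leveraging the boundedness of $v(t)$ and the $L^1$-integrability facts already established in Theorem~\ref{t:rates_inertial}, and introducing one new weighted energy identity obtained by pairing the inertial ODE with $t^2\dot x(t)$. For the $\mathcal O(1/t)$ bound on $\|\dot x(t)\|$, I observe the elementary identity
\[
\frac{d}{dt}\bigl(t^{\alpha-1}(x(t)-x^{\ast})\bigr)=(\alpha-1)\,t^{\alpha-2}\,v(t),
\]
which follows directly from the definition of $v$ in \eqref{eq:v_and_Z}. Since $\|v\|$ is bounded by Theorem~\ref{t:rates_inertial}, integrating from $t_0$ to $t$ yields a uniform bound $\|x(t)-x^{\ast}\|\le K$. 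Rewriting the definition of $v$ as $\dot x(t)=\tfrac{\alpha-1}{t}(v(t)-(x(t)-x^{\ast}))$ then gives $\|\dot x(t)\|=\mathcal O(1/t)$.

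For the integrability $\int_{t_0}^{\infty}s\|\dot x(s)\|^2\,ds<+\infty$ in the regime $\alpha>3$, I take the inner product of \eqref{eq:inertial} with $t^2\dot x(t)$. Using $t^2\langle\ddot x,\dot x\rangle=\frac{d}{dt}(\tfrac{t^2}{2}\|\dot x\|^2)-t\|\dot x\|^2$, the chain rule \eqref{eq:deriv_varphi_mu}, and the rearrangement $t^2\frac{d}{dt}\varphi_{\mu(t)}(x(t))=\frac{d}{dt}(t^2\zeta(t))-2\,t\,\zeta(t)$, I arrive at the identity
\[
\frac{d}{dt}\!\left(\tfrac{t^2}{2}\|\dot x(t)\|^2+t^2\zeta(t)\right)+(\alpha-1)\,t\,\|\dot x(t)\|^2=2\,t\,\zeta(t)-t^2\dot\mu(t)\,\mathcal{D}(\lambda^{\mu(t)}(x(t))).
\]
Integrating from $t_0$ to $t$, the boundary term on the left is $\mathcal O(1)$ by the first step together with Theorem~\ref{t:rates_inertial}(i); the first integral on the right is controlled by \eqref{eq:values_L1}; and the last one by Assumption~\ref{as:mu_cont}(ii) together with $\mathcal{D}\le\mathcal{C}$. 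Dividing by $\alpha-1>0$ then yields \eqref{eq:int_t_normdotx}.

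For the existence of $\lim_{t\to\infty}\|x(t)-x^{\ast}\|$, I set $h(t):=\tfrac12\|x(t)-x^{\ast}\|^2$. Differentiating twice, substituting \eqref{eq:inertial}, and invoking convexity of $\varphi_{\mu(t)}$ (exactly as in the energy estimate of the previous proposition) yields
\[
\ddot h(t)+\frac{\alpha}{t}\dot h(t)\le\|\dot x(t)\|^2-\zeta(t).
\]
Multiplying by $t$ and setting $\psi(t):=t\dot h(t)+(\alpha-1)h(t)$ gives $\dot\psi(t)\le t\|\dot x(t)\|^2+t|\zeta(t)|$, whose right-hand side belongs to $L^1(t_0,+\infty)$ for $\alpha>3$ by the previous step and Theorem~\ref{t:rates_inertial}(ii); thus $\dot\psi^+\in L^1$. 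Since $|t\dot h(t)|\le t\|x(t)-x^{\ast}\|\|\dot x(t)\|=\mathcal O(1)$ by the preceding steps, $\psi$ is bounded below and therefore admits a finite limit $\ell$. The identity $t\dot h+(\alpha-1)h=\psi$ is a first-order linear ODE which, integrated with factor $t^{\alpha-1}$, gives
\[
h(t)=\frac{t_0^{\alpha-1}h(t_0)}{t^{\alpha-1}}+\frac{1}{t^{\alpha-1}}\int_{t_0}^{t}s^{\alpha-2}\psi(s)\,ds,
\]
and an L'H\^opital/Ces\`aro argument applied to the second term yields $h(t)\to \ell/(\alpha-1)$, so $\|x(t)-x^{\ast}\|$ admits a finite limit.

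The main obstacle is the absence of a truly monotone Lyapunov functional: $\mathcal{E}$ is only known to be bounded in both directions, and the residual $\zeta(t)$ is sign-indefinite because $x^{\ast}\notin\operatorname{argmin}\varphi_{\mu(t)}$ in general, so the classical Opial-type proof used for autonomous vanishing-damping flows (as in \cite{attouch2018fast}) does not apply directly. The workaround is the weighted identity in the second step, which, combined with the already-available integrability of $t\mu(t)$, $t^2|\dot\mu(t)|$, and $t|\zeta(t)|$, converts control of $\dot{\mathcal E}^+$ into the stronger weighted $L^2$-integrability \eqref{eq:int_t_normdotx}, from which the existence of $\lim_{t\to\infty}\|x(t)-x^{\ast}\|$ follows via the first-order linear ODE satisfied by $\psi$.
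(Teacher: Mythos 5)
Your proof is correct and follows essentially the same route as the paper's: boundedness of $v$ gives boundedness of $x-x^\ast$ and hence $\|\dot x(t)\|=\mathcal O(1/t)$; the weighted energy identity obtained by pairing the ODE with $t^2\dot x$ is, after rearrangement, precisely the paper's identity for $\frac{d}{dt}\bigl(t^2\zeta(t)\bigr)$; and your $\psi(t)=t\dot h(t)+(\alpha-1)h(t)$ is exactly $\frac{\alpha-1}{2}$ times the paper's auxiliary function $h(t)=\|v(t)\|^2-\frac{t^2}{(\alpha-1)^2}\|\dot x(t)\|^2$, leading to the same first-order linear ODE and limit argument. The only cosmetic differences are using the identity $\frac{d}{dt}\bigl(t^{\alpha-1}(x-x^\ast)\bigr)=(\alpha-1)t^{\alpha-2}v$ in place of the paper's differential inequality for $\|x-x^\ast\|^2$ to get boundedness, and working with the positive part $[\dot\psi]_+\in L^1$ directly, which is the cleanest sufficient condition.
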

		
		\begin{proof}
			Fix $x^*\in\argmin\varphi$ and recall
			\[
			v(t)=x(t)-x^*+\frac{t}{\alpha-1}\dot x(t).
			\]
			By Theorem~\ref{t:rates_inertial}, $v(\cdot)$ is bounded; set $M:=\sup_{t\ge t_0}\|v(t)\|<\infty$.
			Then
			\[
			\dot x(t)=\frac{\alpha-1}{t}\Big(v(t)-(x(t)-x^*)\Big),
			\]
			and, writing $\omega(t):=\|x(t)-x^*\|$, we have
			\[
			\frac{d}{dt}\Big(\frac12\omega^2(t)\Big)=\langle x(t)-x^*,\dot x(t)\rangle
			=\frac{\alpha-1}{t}\big(\langle x(t)-x^*,v(t)\rangle-\|x(t)-x^*\|^2\big)
			\le \frac{\alpha-1}{t}\big(M\omega(t)-\omega^2(t)\big).
			\]
			This implies $\omega(t)\le \max\{\omega(t_0),M\}$ for all $t\ge t_0$, hence
			\[
			\|\dot x(t)\|\le \frac{\alpha-1}{t}\big(\|v(t)\|+\|x(t)-x^*\|\big)\le \frac{C}{t},
			\]
			which proves $\|\dot x(t)\|=\mathcal O(1/t)$.
			
			Assume now $\alpha>3$. Taking the inner product of \eqref{eq:inertial} with $\dot x(t)$ gives
			\[
			\frac12\frac{d}{dt}\|\dot x(t)\|^2+\frac{\alpha}{t}\|\dot x(t)\|^2+\langle\nabla\varphi_{\mu(t)}(x(t)),\dot x(t)\rangle=0,
			\]
			hence
			\[
			\langle\nabla\varphi_{\mu(t)}(x(t)),\dot x(t)\rangle=-\frac12\frac{d}{dt}\|\dot x(t)\|^2-\frac{\alpha}{t}\|\dot x(t)\|^2.
			\]
			Using \eqref{eq:deriv_varphi_mu} yields
			\begin{align*}
				\frac{d}{dt}\big(t^2\zeta(t)\big)
				&=2t\zeta(t)+t^2\dot\zeta(t)\\
				&=2t\zeta(t)-t^2\dot\mu(t)\mathcal D(\lambda^{\mu(t)}(x(t)))
				-\frac{t^2}{2}\frac{d}{dt}\|\dot x(t)\|^2-\alpha t\|\dot x(t)\|^2\\
				&=2t\zeta(t)-t^2\dot\mu(t)\mathcal D(\lambda^{\mu(t)}(x(t)))
				-\frac12\frac{d}{dt}\big(t^2\|\dot x(t)\|^2\big)-(\alpha-1)t\|\dot x(t)\|^2.
			\end{align*}
			Rearranging, we obtain
			\[
			(\alpha-1)t\|\dot x(t)\|^2
			=2t\zeta(t)-t^2\dot\mu(t)\mathcal D(\lambda^{\mu(t)}(x(t)))
			-\frac12\frac{d}{dt}\big(t^2\|\dot x(t)\|^2\big)-\frac{d}{dt}\big(t^2\zeta(t)\big).
			\]
			Integrating on $[t_0,t]$ gives
			\begin{align*}
				(\alpha-1)\int_{t_0}^t s\|\dot x(s)\|^2ds
				&=2\int_{t_0}^t s\zeta(s)\,ds
				-\int_{t_0}^t s^2\dot\mu(s)\mathcal D(\lambda^{\mu(s)}(x(s)))\,ds\\
				&\quad-\frac12\Big[s^2\|\dot x(s)\|^2\Big]_{t_0}^t-\Big[s^2\zeta(s)\Big]_{t_0}^t.
			\end{align*}
			Now, $\dot\mu\le 0$ and $\mathcal D\le \mathcal C$, so
			\[
			\int_{t_0}^\infty \! \big|t^2\dot\mu(t)\mathcal D(\cdot)\big|\,dt
			\le \mathcal C\int_{t_0}^\infty t^2|\dot\mu(t)|\,dt<\infty
			\quad\text{by Assumption~\ref{as:mu_cont}(ii)}.
			\]
			Moreover, Theorem~\ref{t:rates_inertial}(ii) gives $t|\zeta(t)|\in L^1(t_0,\infty)$, hence $\int_{t_0}^{\infty}  s\zeta(s)\,ds$ is finite.
			Finally, $t^2\|\dot x(t)\|^2$ is bounded because $\|\dot x(t)\|=\mathcal O(1/t)$, and $t^2\zeta(t)$ is bounded by the definition of $\mathcal E$ and the boundedness of $v(t)$ (see \eqref{eq:v_and_Z}).
			Letting $t\to\infty$ yields \eqref{eq:int_t_normdotx}.
			
			We now prove the existence of $\lim_{t\to\infty}\|x(t)-x^*\|$.
			Define
			\[
			u(t):=\frac12\|x(t)-x^*\|^2,\qquad 
			h(t):=\|v(t)\|^2-\frac{t^2}{(\alpha-1)^2}\|\dot x(t)\|^2.
			\]
			A direct expansion of $\|v(t)\|^2$ gives the identity
			\[
			h(t)=\|x(t)-x^*\|^2+\frac{2t}{\alpha-1}\langle x(t)-x^*,\dot x(t)\rangle
			=2u(t)+\frac{2t}{\alpha-1}\dot u(t),
			\]
			hence
			\begin{equation}\label{eq:u_h_link}
				\dot u(t)+\frac{\alpha-1}{t}u(t)=\frac{\alpha-1}{2t}\,h(t).
			\end{equation}
			Next, differentiating $h$  yields
			\[
			\dot h(t)=\frac{2t}{\alpha-1}\Big(\|\dot x(t)\|^2-\langle x(t)-x^*,\nabla\varphi_{\mu(t)}(x(t))\rangle\Big).
			\]
			By convexity of $\varphi_{\mu(t)}$,
			\[
			\langle \nabla\varphi_{\mu(t)}(x(t)),x(t)-x^*\rangle \ge \varphi_{\mu(t)}(x(t))-\varphi_{\mu(t)}(x^*).
			\]
			Since $\varphi_{\mu(t)}(x^*)\le \varphi(x^*)=\inf\varphi$, we get
			\[
			-\langle \nabla\varphi_{\mu(t)}(x(t)), x(t)-x^*\rangle \le -(\varphi_{\mu(t)}(x(t))-\inf\varphi)= -\zeta(t),
			\]
			and therefore
			\[
			|\dot h(t)|
			\le \frac{2t}{\alpha-1}\Big(\|\dot x(t)\|^2+|\zeta(t)|\Big).
			\]
			The right-hand side is integrable on $(t_0,\infty)$ thanks to \eqref{eq:int_t_normdotx} and since $t|\zeta(t)|\in L^1(t_0,+\infty)$.
			Since $h(\cdot)$ is bounded (because $v$ is bounded and $t\dot x$ is bounded), $\dot h\in L^1$ implies that
			$\lim_{t\to\infty}h(t)=:h_\infty$ exists.
			
			Finally, solve \eqref{eq:u_h_link}: multiplying by $t^{\alpha-1}$ gives
			\[
			\big(t^{\alpha-1}u(t)\big)'=\frac{\alpha-1}{2}\,t^{\alpha-2}h(t).
			\]
			Since $h(t)\to h_\infty$, one gets
			\[
			\lim_{t\to\infty}u(t)=\frac{h_\infty}{2},
			\]
			hence $\lim_{t\to\infty}\|x(t)-x^*\|$ exists.
		\end{proof}
		
		The preceding estimates allow us to derive a sharper decay rate for the objective
		values and, by Opial's lemma (see Lemma~\ref{l:opial}), to establish weak
		convergence of the trajectories. 
		\begin{theorem}\label{t:traj} Let $x(\cdot)$ be a solution of \eqref{eq:inertial} with $\alpha> 3$, and let $\mu(\cdot)$ satisfy Assumption~\ref{as:mu_cont}. Then the following assertions hold:
			\begin{enumerate}[label={\rm (\roman*)}]
				\item The value gap satisfies 
				$$\varphi_{\mu(t)}(x(t))-\inf \varphi=o\left(\frac{1}{t^2}\right).
				$$
				\item If $\argmin \varphi\neq \emptyset$, then $x(t)$ converges weakly to some $x_{\infty} \in \argmin \varphi$. 
				\item The (unsmoothed) value gap satisfies 
				$$\varphi(x(t))-\inf \varphi
				= o\!\left(\frac{1}{t^{2}}\right).$$
				\item It holds that $\Vert \dot{x}(t)\Vert =o\!\left(1/t\right)$ as $t\to \infty$.
			\end{enumerate}
		\end{theorem}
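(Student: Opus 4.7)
The plan is to derive (i) and (iv) together from an auxiliary ``anchored'' functional, to obtain (iii) immediately by comparing $\varphi$ with $\varphi_{\mu(t)}$, and to deduce (ii) via Opial's lemma. Set
\[
W(t) := t^{2}\zeta(t) + \tfrac{1}{2}\,t^{2}\|\dot x(t)\|^{2},
\]
where $\zeta(t)=\varphi_{\mu(t)}(x(t))-\inf\varphi$. Reproducing the computation already performed in the proof of Proposition~\ref{p:estimations_traj} (combining \eqref{eq:inertial} with the envelope identity \eqref{eq:deriv_varphi_mu}), one obtains
\[
\dot W(t) \;=\; 2t\,\zeta(t) \;-\;(\alpha-1)\,t\,\|\dot x(t)\|^{2} \;-\; t^{2}\,\dot\mu(t)\,\mathcal{D}(\lambda^{\mu(t)}(x(t))).
\]
Each summand is integrable on $(t_{0},+\infty)$: the first by Theorem~\ref{t:rates_inertial}(ii), the second by \eqref{eq:int_t_normdotx}, and the third by Assumption~\ref{as:mu_cont}(ii), using $0\le \mathcal{D}\le \mathcal{C}$ and $\dot\mu\le 0$. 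Hence $W(t)\to W_{\infty}\in\mathbb{R}$.

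The crux is to identify $W_{\infty}=0$. I would argue that
\[
\frac{|W(t)|}{t} \;\le\; t\,|\zeta(t)|\;+\;\tfrac{1}{2}\,t\,\|\dot x(t)\|^{2},
\]
and the right-hand side again lies in $L^{1}(t_{0},+\infty)$ by the same two estimates. If $W_{\infty}\neq 0$, then $|W(t)|/t\ge |W_{\infty}|/(2t)$ eventually, which contradicts integrability. Hence $W_{\infty}=0$. Since $\zeta(t)\ge -\mathcal{C}\mu(t)$ with $\mu(t)=o(1/t^{2})$ (Remark~\ref{remark:mus}), one has $\liminf_{t\to\infty} t^{2}\zeta(t)\ge 0$; combined with $W(t)\to 0$ and the nonnegativity of $t^{2}\|\dot x(t)\|^{2}$, this forces both $t^{2}\|\dot x(t)\|^{2}\to 0$ and $t^{2}\zeta(t)\to 0$, which give (iv) and (i). Assertion (iii) is then immediate from \eqref{eq:prop1}, since $0\le \varphi(x(t))-\inf\varphi\le \zeta(t)+\mathcal{C}\mu(t)$ and both summands are $o(1/t^{2})$.

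For (ii), I would invoke the continuous-time Opial lemma (Lemma~\ref{l:opial}) with $S=\argmin\varphi$. The first hypothesis holds by Proposition~\ref{p:estimations_traj}: since $\alpha>3$, $\lim_{t\to\infty}\|x(t)-x^{\ast}\|$ exists for every $x^{\ast}\in S$. For the second hypothesis, let $t_{n}\to+\infty$ with $x(t_{n})\rightharpoonup \bar x$; since $\varphi$ is convex and lower semicontinuous as a pointwise supremum (over the compact set $Q$) of continuous convex functions of $x$, it is weakly sequentially lower semicontinuous, and (iii) yields $\varphi(\bar x)\le \liminf_{n}\varphi(x(t_{n}))=\inf\varphi$, so $\bar x\in S$. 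The main obstacle I anticipate is precisely the identification $W_{\infty}=0$: neither $t|\zeta|\in L^{1}$ nor the integrability \eqref{eq:int_t_normdotx} alone forces $t^{2}\zeta(t)\to 0$, and the joint use of both via the auxiliary functional $W$ and the $|W(t)|/t\in L^{1}$ step is the only delicate point; everything else reduces to bookkeeping.
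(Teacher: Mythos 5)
Your proof is correct and takes a genuinely different route from the paper's. The paper works with the nonnegative functional $\mathcal{W}(t)=\tfrac{1}{2}\|\dot x(t)\|^2+\zeta(t)+\mathcal{C}\mu(t)$ and re-derives the needed integrability $\int s\,\mathcal{W}(s)\,ds<\infty$ from scratch via a second-order differential inequality in $u(t)=\tfrac12\|x(t)-x^*\|^2$, a completion-of-square identity, and then the $[q']_+$-monotonicity trick to obtain $\lim t^2\mathcal{W}(t)=\ell$, followed by an integrability contradiction to force $\ell=0$; in particular, the paper's proof of (i) is essentially self-contained and does \emph{not} invoke Theorem~\ref{t:rates_inertial}(ii) or Proposition~\ref{p:estimations_traj}. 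You instead exploit those two already-established facts ($t|\zeta(t)|\in L^1$ and $\int s\|\dot x(s)\|^2 ds<\infty$, both of which already require $\alpha>3$) to show $\dot W\in L^1$ directly for the signed functional $W(t)=t^2\zeta(t)+\tfrac12 t^2\|\dot x(t)\|^2$, obtain $W(t)\to W_\infty$, and then identify $W_\infty=0$ via the elementary observation that $|W(t)|/t\in L^1$ rules out $W_\infty\neq 0$. What your approach buys is brevity and modularity: the argument for (i) and (iv) collapses to a few lines and cleanly reuses the prior estimates. What the paper's approach buys is that it re-proves the key integrability inside the theorem, so the lengthy display chain is not strictly necessary if one is willing to lean on the earlier results as you do. The handling of the sign issue (using $\zeta\ge -\mathcal{C}\mu$ and $\mu=o(1/t^2)$ to get $\liminf t^2\zeta\ge 0$, then splitting $W=t^2\zeta+\tfrac12 t^2\|\dot x\|^2\to 0$ into two vanishing nonnegative pieces) is careful and correct, and parts (ii), (iii) match the paper's Opial-type conclusion and the $\eqref{eq:prop1}$-transfer.
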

		
		\begin{proof} To prove (i), let us consider 
			\[\mathcal{W}(t) = \dfrac{1}{2}\Vert\dot{x}(t) \Vert^2 + \varphi_{\mu(t)}(x(t)) - \inf \varphi + \mathcal{C}\mu(t) \quad \textrm{ and } \quad  u(t) = \frac{1}{2}\Vert x(t) - x^* \Vert^2.
			\]
			By virtue of \eqref{eq:prop1} it is clear that $\mathcal{W}$ is nonnegative. On the one hand,  
			\[u'(t) = \left\langle \dot{x}(t), x(t) - x^* \right\rangle \quad \textrm{ and } \quad  u''(t) = \Vert\dot{x}(t) \Vert^2 +  \left\langle \ddot{x}(t), x(t) - x^*  \right\rangle.\]
			Therefore, by using the dynamics \eqref{eq:inertial} and the convexity of $x\mapsto \varphi_{\mu(t)}(x)$, we obtain that
			\begin{align*}
				u''(t) + \frac{\alpha}{t}u'(t) &= \Vert\dot{x}(t) \Vert^2 + \left\langle -\nabla\varphi_{\mu(t)}(x(t)),  x(t)-x^{\ast} \right\rangle \\
				&\leq \Vert\dot{x}(t) \Vert^2  + \varphi_{\mu(t)}(x^*)-\varphi_{\mu(t)}(x(t)) \\
				&\leq \Vert\dot{x}(t) \Vert^2  + \varphi(x^*)-\varphi_{\mu(t)}(x(t)) \\
				&=\frac{3}{2}\Vert \dot{x}(t)\Vert^2-\mathcal{W}(t)+\mathcal{C}\mu(t),
			\end{align*}
			where we have used \eqref{eq:prop1}.
			Therefore, for all $t\geq t_0$,  
			\begin{equation}\label{eq:upbound_W}
				\mathcal{W}(t) \leq \frac{3}{2}\Vert\dot{x}(t) \Vert^2  -  u''(t) - \frac{\alpha}{t}u'(t) + \mathcal{C}\mu(t).
			\end{equation}
			On the other hand, by using \eqref{eq:prop1}, \eqref{eq:deriv_varphi_mu} and \eqref{eq:inertial}, we get that
			\begin{align*}
				\mathcal{W}'(t) &= \left\langle \ddot{x}(t)+\nabla\varphi_{\mu(t)}(x(t)),  \dot{x}(t) \right\rangle  -\dot{\mu}(t)\left(\mathcal{D}(\lambda^{\mu(t)}(x(t)))  - \mathcal{C}\right) \\
				&= -\dfrac{\alpha}{t}\Vert\dot{x}(t) \Vert^2 +\vert \dot{\mu}(t)\vert \left(\mathcal{D}(\lambda^{\mu(t)}(x(t)))  - \mathcal{C}\right).
			\end{align*}
			Hence, 
			\begin{equation}\label{eq:deriv_ttW}
				\left( t^2 \mathcal{W}(t)\right)' =2t\mathcal{W}(t)+t^{2}\mathcal{W}'(t)= 2t\mathcal{W}(t) - \alpha t\Vert\dot{x}(t) \Vert^2 +t^2\vert \dot{\mu}(t)\vert \left(\mathcal{D}(\lambda^{\mu(t)}(x(t)))  - \mathcal{C}\right),
			\end{equation}
			which, since $\mathcal{C} = \sup_{\lambda \in Q} \mathcal{D}(\lambda)$,  implies that 
			\[\alpha t\Vert\dot{x}(t) \Vert^2 \leq 2t \mathcal{W}(t) -  \left( t^2 \mathcal{W}(t)\right)'.\]
			Moreover, from \eqref{eq:upbound_W} and the above inequality, we obtain 
			\[t \mathcal{W}(t) \leq \frac{3t}{2}\Vert\dot{x}(t) \Vert^2  -  tu''(t) - \alpha u'(t) + \mathcal{C}t\mu(t) \leq \frac{3}{\alpha}t \mathcal{W}(t) - \dfrac{3}{2\alpha}\left( t^2 \mathcal{W}(t)\right)' -  tu''(t) - \alpha u'(t) + \mathcal{C}t\mu(t). \]
			Rearranging terms, we obtain that
			\[\left( 1 -\frac{3}{\alpha} \right)t \mathcal{W}(t) + \dfrac{3}{2\alpha}\left( t^2 \mathcal{W}(t)\right)' \leq -  tu''(t) - \alpha u'(t) + \mathcal{C}t\mu(t). \]
			Integrating the previous over $[t_0,t]$, we obtain 
			\[\left( 1 -\frac{3}{\alpha} \right)\int_{t_0}^{t} s \mathcal{W}(s)\, ds + \dfrac{3}{2\alpha} t^2 \mathcal{W}(t) \leq (1-\alpha)u(t) - tu'(t)  + \frac{3t_0^2}{2\alpha}\mathcal{W}(t_0) + t_0u'(t_0) -(1-\alpha)u(t_0) +\mathcal{C} \int_{t_0}^{t}s\mu(s)\, ds.\]
			By virtue of Assumption \ref{as:mu_cont}(i), it follows that $t\mu(t) \in L^1(t_0,\infty)$. Hence, there exists a constant $C_{\mathcal{W}}>0$ such that
			\[\left( 1 -\frac{3}{\alpha} \right)\int_{t_0}^{t} s \mathcal{W}(s)\, ds + \dfrac{3}{2\alpha} t^2 \mathcal{W}(t) \leq (1-\alpha)u(t) - tu'(t)  + C_{\mathcal{W}}.\]
			Besides, we observe that 
			\[t \vert u'(t) \vert \leq t \Vert \dot{x}(t) \Vert \Vert x(t) - x^* \Vert \leq 2\sqrt{t^2 \mathcal{W}(t)}\sqrt{u(t)}.\]
			Hence, using the above inequality, we get that 
			\begin{equation}\label{eq.almost-final}
				\left( 1 -\frac{3}{\alpha} \right)\int_{t_0}^{t} s \mathcal{W}(s)\, ds + \dfrac{3}{2\alpha} t^2 \mathcal{W}(t) \leq (1-\alpha)u(t) +2\sqrt{t^2 \mathcal{W}(t)}\sqrt{u(t)} + C_{\mathcal{W}}.
			\end{equation}
			Moreover, it is straightforward to show that
			$$
			(1-\alpha)u(t) +2\sqrt{t^2 \mathcal{W}(t)}\sqrt{u(t)}=\frac{t^2 \mathcal{W}(t)}{\alpha-1}-(\alpha-1)\left(\sqrt{u(t)}-\frac{\sqrt{t^2 \mathcal{W}(t)}}{\alpha-1}\right)^2.
			$$
			Hence, using the equality above and \eqref{eq.almost-final}, we obtain
			\[\left( 1 -\frac{3}{\alpha} \right)\int_{t_0}^{t} s \mathcal{W}(s)\, ds + \dfrac{\alpha-3}{2\alpha(\alpha-1)} t^2 \mathcal{W}(t) \leq C_{\mathcal{W}}.\]
			Therefore, since $\alpha>3$, we have proved that
			\[\sup_{t\geq t_0} t^2 \mathcal{W}(t) < +\infty \quad \textrm{ and }  \quad \int_{t_0}^{\infty} s\mathcal{W}(s)\,ds < +\infty.\]
			Set $q(t):=t^2 \mathcal{W}(t)$. Then, from \eqref{eq:deriv_ttW}, we get that
			$$
			[q^{\prime}(t)]_+=\max\{(t^2 \mathcal{W}(t))^{\prime},0\}\leq 2t\mathcal{W}(t) \quad \textrm{ for all } t\geq t_0.
			$$
			Then, since $t\mathcal{W}(t)\in L^1(t_0,\infty)$, it follows that
			$$
			\vartheta(t):=q(t)-\int_{t_0}^t [q^{\prime}(s)]_+ds\geq \int_{t_0}^{\infty} [q^{\prime}(s)]_+ds>-\infty.
			$$
			Moreover, for a.e. $t\geq t_0$,
			$$
			\vartheta' (t)=q'(t)-[q^{\prime}(t)]_+\leq 0,
			$$
			so $\vartheta$ is nonincreasing and bounded from below. Hence, the limit $\ell:=\lim_{t\to \infty}t^2 \mathcal{W}(t)\geq 0$ exists. Assume, for the sake of contradiction, that $\ell>0$.  Then there exists $\tilde{t}\geq t_0$ such that 
			$$
			0<\frac{\ell}{2}\leq t^2 \mathcal{W}(t) \quad \textrm{ for all } t\geq \tilde{t}.
			$$
			Consequently,
			\begin{equation*}
				+\infty>\int_{t_0}^{\infty} s\mathcal{W}(s)ds=\int_{t_0}^{\tilde{t}} s\mathcal{W}(s)ds+\int_{\tilde{t}}^{\infty} s\mathcal{W}(s)ds\\
				\geq \int_{t_0}^{\tilde{t}} s\mathcal{W}(s)ds+\frac{\ell}{2}\int_{\tilde{t}}^{\infty} \frac{ds}{s},
			\end{equation*}
			which is a contradiction. Therefore, $\ell=0$. Finally, by observing that 
			\[ -\mathcal{C} t^2 \mu(t)\leq t^2(\varphi_{\mu(t)}x(t) - \inf \varphi) \leq t^2 \mathcal{W}(t) - t^2\mathcal{C}\mu(t)\]
			and using Lemma \ref{lemma:orden-t2}, we get that 
			\[\lim_{t\to \infty} t^2\vert \varphi_{\mu(t)}x(t) - \inf \varphi\vert = 0.\]

			We now prove assertion (ii). Let $x^{\ast}\in\argmin\varphi$ be arbitrary. Since $\alpha>3$, Proposition \ref{p:estimations_traj} ensures that $\Vert x(t)-x^{\ast}\Vert$ admits a limit as $t\to +\infty$, i.e., condition (i) in Lemma~\ref{l:opial} holds. To verify condition~(ii) in Lemma~\ref{l:opial}, let $(t_{n})$ be any sequence with $t_{n}\to+\infty$ and $x(t_{n})\rightharpoonup \bar{x}$. By convexity
			and continuity, $\varphi$ is weakly lower semicontinuous, and thus
			\begin{equation*}
				\varphi(\bar{x})\leq \liminf_{n\to \infty}\varphi(x(t_n))
				\leq \limsup_{n\to \infty}\left(\varphi_{\mu(t_n)}(x(t_n))+\mathcal{C}\mu(t_n)\right),
			\end{equation*}
			where the last inequality follows from \eqref{eq:prop1}. Since
			$\mu(t_{n})\to 0$ and
			$\varphi_{\mu(t_{n})}(x(t_{n}))\to \inf\varphi$ by the first part of the proof,
			we conclude that $\varphi(\bar{x})\le \inf\varphi$, hence
			$\bar{x}\in \argmin\varphi$. Opial's lemma then yields that $x(t)$ converges
			weakly to some $x_{\infty}\in \argmin\varphi$ as $t\to+\infty$. 
			Assertion (iii) follows from assertion (i) and inequality~\eqref{eq:prop1}. Finally,  using the inequality $\frac{1}{2}\Vert \dot{x}(t)\Vert^2 \leq \mathcal{W}(t)$ and the squeeze theorem, we obtain (iv). 
		\end{proof}

		\section{Applications}\label{eq:appl}
		In this section, we present illustrative examples of the penalty-based smoothing framework and the associated inertial dynamics with vanishing regularization. We first show how the construction applies to multiobjective through Chebyshev (max) scalarization, leading to convergence guarantees toward weakly Pareto optimal solutions. We then discuss the distributionally robust optimization (DRO) setting, where KL-based penalties produce tractable smooth surrogates for worst-case expectations over ambiguity sets. All numerical experiments are performed by numerically integrating the inertial system~\eqref{eq:inertial} in \texttt{Python} using \texttt{solve\_ivp} from \texttt{scipy.integrate}, with relative and absolute tolerances set to \texttt{rtol=1e-8} and \texttt{atol=1e-10}, respectively. Whenever randomness is involved, reproducibility is ensured by fixing the random number generator seed via \texttt{numpy.random.default\_rng(2)}.

		\subsection{Multiobjective Optimization}
		Consider the multiobjective optimization problem
		\begin{equation}\label{eq:prob}
			\min_{x \in \mathcal{H}} G(x):= \paren{g_1(x),g_2(x),\ldots,g_m(x)},
		\end{equation}
		where each $g_i\colon \mathcal{H} \to \R$ is a convex and Fr\'echet differentiable. We refer to \cite{MR1784937,ehrgott2005multicriteria} for standard background on multiobjective optimization.
		
		A point $x^*$ is called \textit{Pareto optimal} if there is no $x\in \mathcal{H}$ such that
		\[g_i(x) \leq g_i(x^*) \quad \text{ for all } \quad i=1,\ldots,m, \quad \text{ and } \quad g_j(x) < g_j(x^*),\quad \text{for some }  j.\]
		Informally, this means that one cannot improve one objective without worsening at least one other objective. The set of Pareto optimal points is denoted by $P$ (Pareto set). A point $x^*$ is called \textit{weakly Pareto optimal} if there is no $x \in \mathcal{H}$ such that $g_i(x) < g_i(x^*)$ for all $i = 1,\ldots,m$. The set of weakly Pareto optimal points is denoted by $P_w$ (weak Pareto set). Clearly, $P \subset P_w$.

		A point $x^* \in \mathcal H$ is called \textit{Pareto critical} (or \emph{weakly stationary}) if there exists $\lambda \in \Delta_m$ such that
		\begin{equation}\label{eq:cond_opt}
			\sum_{i=1}^m \lambda_i \nabla g_i(x^*) = 0 .
		\end{equation}
		In the unconstrained convex differentiable setting \eqref{eq:prob}, Pareto criticality characterizes weak Pareto optimality. More precisely, $x^*$ is weakly Pareto optimal if and only if
		\begin{equation}\label{eq:weakpareto}
			0 \in \operatorname{conv}\{\nabla g_1(x^*),\ldots,\nabla g_m(x^*)\},
		\end{equation}
		or, equivalently, if \eqref{eq:cond_opt} holds for some $\lambda \in \Delta_m$  (see Ehrgott~\cite[Theorem~3.21]{ehrgott2005multicriteria}). In the terminology of Ehrgott, Pareto critical points correspond to \emph{weakly efficient} solutions.

		We connect \eqref{eq:prob} to our supremum framework via the classical Chebyshev (max) scalarization
		\[\min \varphi(x):= \max_{i=1,\ldots,m}g_i(x).
		\]
		Any minimizer of $\varphi$ is weakly Pareto optimal: indeed, if $x^{\ast}\in \operatorname{argmin}\varphi$ and there existed $x$ with $g_i(x)<g_i(x^{\ast})$ for all $i$, then $\max_i g_i(x)<\max_i g_i(x^{\ast})$, contradicting optimality of $x^{\ast}$. Moreover, this scalarization fits \eqref{eq:prob_max} with $Q=\Delta_m$, since $\varphi(x)=\max_{\lambda\in \Delta_m} \sum_{i=1}^m \lambda_i g_i(x)$. If the $g_i$ are differentiable, then the subdifferential of $\varphi$ is
		\begin{equation*}
			\partial \varphi(x)=\operatorname{conv}\left\{\nabla g_i(x) \big|\ g_i(x)=\varphi(x)\right\},
		\end{equation*}
		so the first-order condition $0\in \partial \varphi(x^{\star})$ implies \eqref{eq:weakpareto}, hence $x^{\star}\in P_w$. In particular, $\argmin \varphi \subset P_w$. 
		Therefore, Theorems \ref{t:rates_inertial} and \ref{t:traj}, applied to the max-scalarized  function through our smoothing construction, yield convergence guarantees (in values and, for $\alpha>3$, weak convergence of trajectories) toward an element of $\operatorname{argmin}\varphi$, and hence toward a weakly Pareto optimal point of \eqref{eq:prob}. For related accelerated continuous-time models and efficient discretizations designed directly for multiobjective problems (smooth and nonsmooth), we refer to \cite{BotSonntag2026JMAA,SonntagEtAl2024Nonsmooth,MR2066239,sonntagpeitz2024fast}.
		
		To illustrate our method, we consider a benchmark from \cite[Section 5.1]{sonntag2024fast}. Consider the multiobjective problem \eqref{eq:prob} with $m=2$ quadratic functions 
		\[g_i(x)=\frac{1}{2}(x-x^i)^\top \mathcal{M}_i(x-x^i), \quad i=1,2,\]
		where 
		\begin{equation}\label{matrices}
			\mathcal{M}_1 = \begin{pmatrix}
				2 & 0 \\ 0 & 1
			\end{pmatrix}, \quad \mathcal{M}_2= \begin{pmatrix}
				1 & 0 \\ 0 & 2
			\end{pmatrix}, \quad x^1 = \begin{pmatrix}
				1\\ 0
			\end{pmatrix}, \quad x^2 = \begin{pmatrix}
				0\\ 1
			\end{pmatrix}. 
		\end{equation}
		For this instance, the Pareto set admits the explicit parametrization
		\[P = \left\lbrace \left( \frac{2\rho}{\rho+1}, \frac{2(1-\rho)}{2-\rho} \right): \rho \in [0,1]\right\rbrace. \] 
		We consider the Chebyshev scalarization
		\begin{equation}\label{eq:minmax_quad}
			\min_{x\in \mathbb{R}^2} \varphi(x):= \max\{ g_1(x), g_2(x)\},
		\end{equation}
		and study the inertial system \eqref{eq:inertial} on $t\in [1,50]$ with $\alpha=3.1$ and $\mu(t)=t^{-r}$ for several values $r>2$. Figure~\ref{fig:quad} reports the evolution of the regularized values together with the trajectories $(x_1(t),x_2(t))$. The observed limit point coincides with the minimizer of \eqref{eq:minmax_quad} computed independently and corresponding to the symmetric Pareto point $\rho=\frac{1}{2}$ in the above parametrization. This illustrates the theoretical prediction: trajectories of \eqref{eq:inertial} converge to a solution of the max-scalarized problem \eqref{eq:minmax_quad}, and hence to a weakly Pareto optimal point of the original multiobjective problem \eqref{eq:prob}.
		
		\begin{figure}[h]
			\centering
			\subfigure[Residuals for different values of $r$.]{\includegraphics[width=0.48\textwidth]{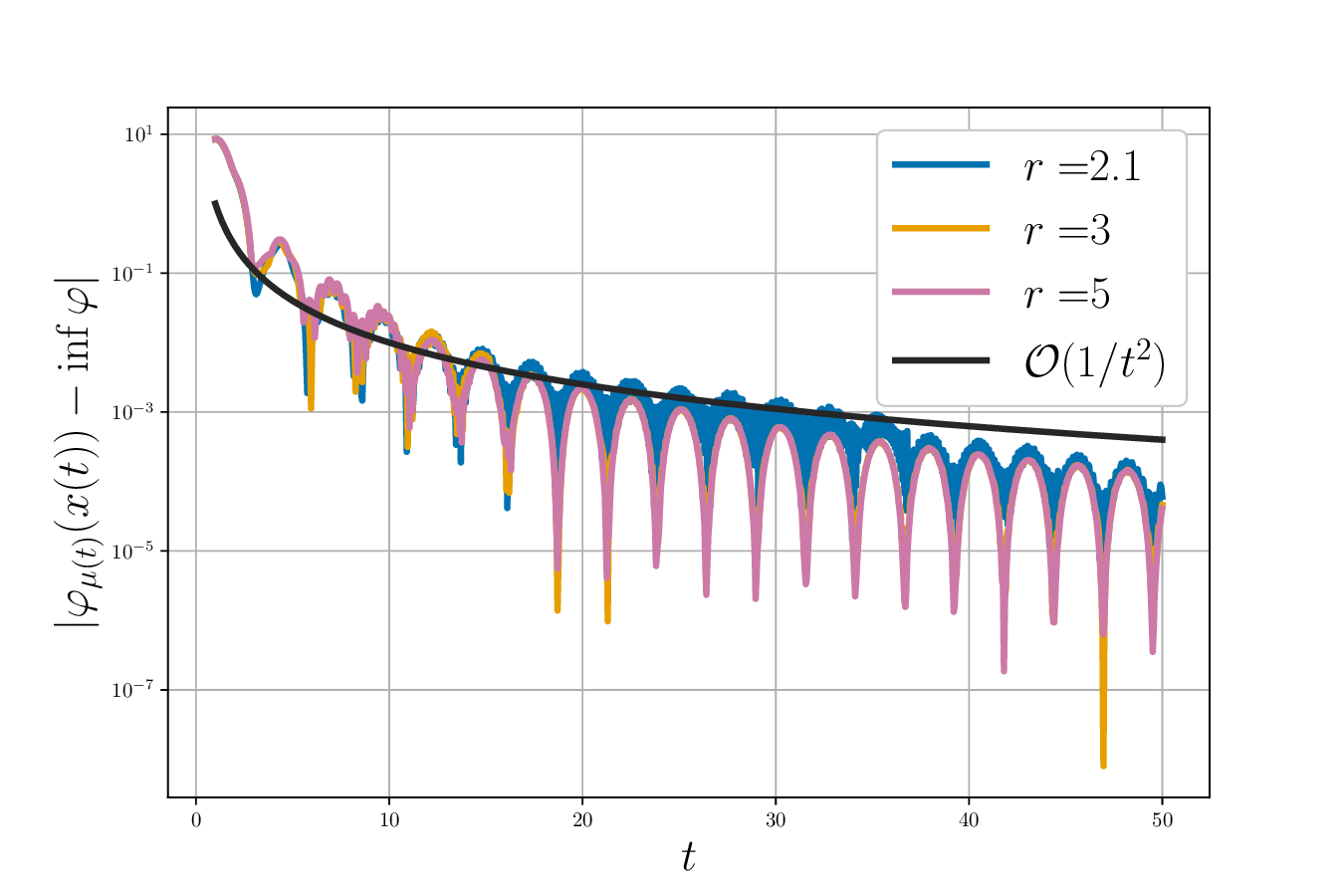} }
			\subfigure[Trajectories for different values of $r$.]{\includegraphics[width=0.48\textwidth]{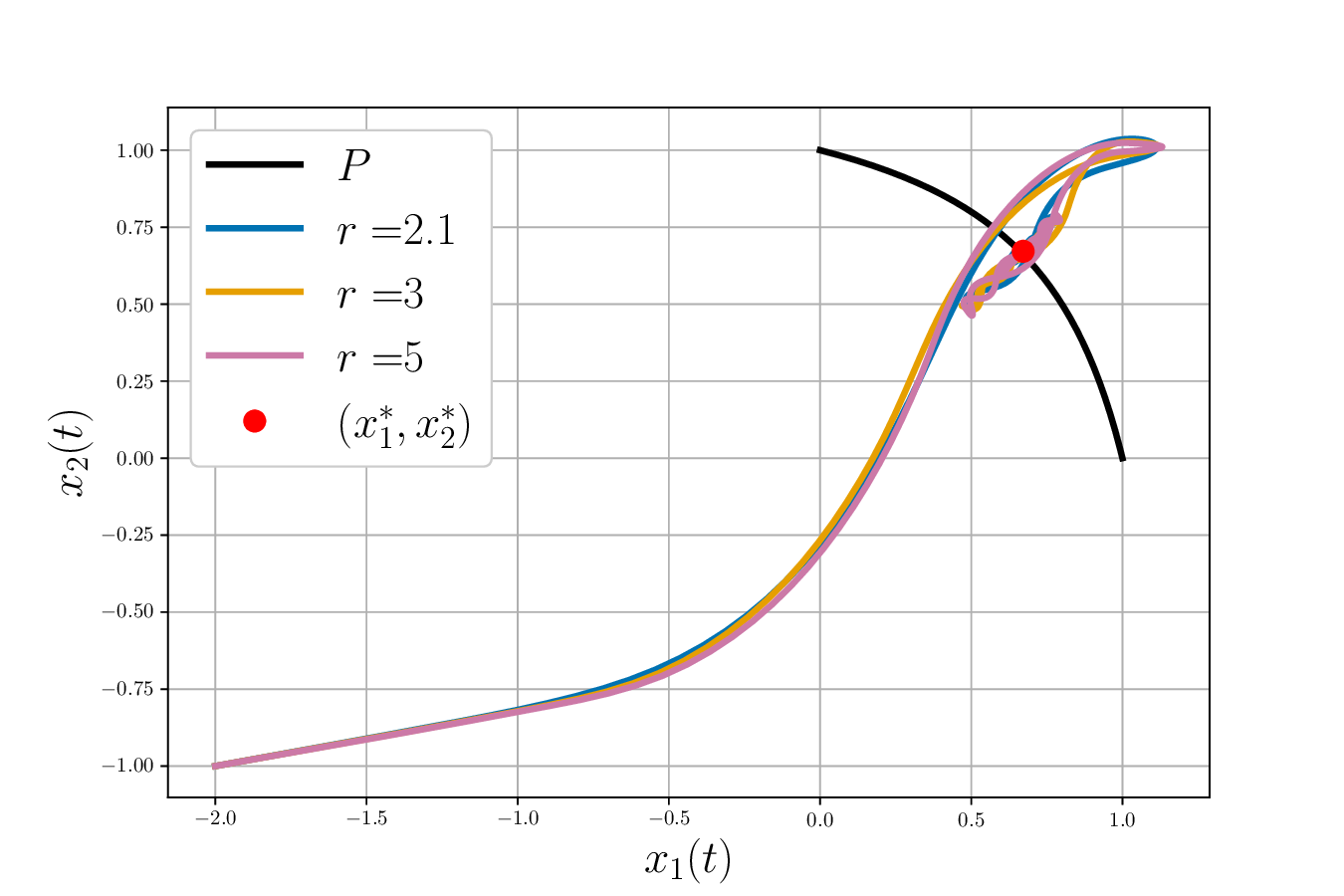}}
			\caption{Problem \eqref{eq:prob} with $g_i(x)=\frac{1}{2}(x-x^i)^\top \mathcal{M}_i(x-x^i)$, where $\mathcal{M}_i$ and $x^i$ are defined in \eqref{matrices}.}
			\label{fig:quad} 
		\end{figure}

		\subsection{Distributionally Robust Optimization}
		We now illustrate how the penalty-based smoothing framework naturally covers  \emph{Distributionally Robust Optimization} (DRO). We refer to \cite{MR4362585,MR4379565,MR3294550} for background. DRO addresses decision-making under uncertainty when the data-generating distribution is unknown or only partially specified. Instead of postulating a single probabilistic model, one assumes that the true distribution belongs to a prescribed \emph{ambiguity set} constructed from the available information, and optimizes against the worst-case distribution within that set. This yields robust decisions that bridge classical stochastic optimization and worst-case robust optimization.

		Let $Q=\mathcal{P}\subset \Delta_m$ be an ambiguity set of probability vectors, and let $\xi$ be a discrete random variable taking values in $\{\xi_1,\ldots,\xi_m\}$. For each scenario,  define the cost $g_i(x)=f(x,\xi_i)$ for $i=1,\ldots,m$. Then \eqref{eq:Framework} can be written as the standard distributionally robust function
		\[
		\varphi(x)=\sup_{\mathbb{P}\in\mathcal{P}} \mathbb{E}_{\mathbb{P}}\!\left[f(x,\xi)\right]
		=\sup_{p\in\mathcal{P}} \sum_{i=1}^m p_i\, f(x,\xi_i).
		\]
		We identify $\mathbb P$ with its probability vector $p\in\Delta_m$ and take the entropic penalty
		$\mathcal D(p)=\operatorname{KL}(p\|u)=\sum_{i=1}^m p_i\log(mp_i)$, where
		$u=(1/m,\ldots,1/m)\in\Delta_m$. Then the corresponding regularization becomes 
		\[
		\varphi_\mu(x)
		=\sup_{\mathbb{P}\in\mathcal{P}}
		\left\{\mathbb{E}_{\mathbb{P}}\!\left[f(x,\xi)\right]-\mu\,\operatorname{KL}(\mathbb{P}\Vert u)\right\}
		=\sup_{p\in\mathcal{P}}
		\left\{\sum_{i=1}^m p_i\, f(x,\xi_i)-\mu\sum_{i=1}^m p_i\log(mp_i)\right\}.
		\]
		The resulting explicit variational/KKT characterization for $\varphi_\mu$ (and for the maximizer $p^\mu(x)$) depends on the geometry of the ambiguity set $\mathcal{P}$. In order to obtain an explicit characterization of $p^{\mu}(x)$, we specialize to the moment-constrained ambiguity set 
		\begin{equation}\label{eq:amb_set}
			\mathcal{P} = \left\{ p \in \R^m : A p = b, \ \langle\mathbbm{1},p\rangle=1,\ p\ge 0 \right\},
		\end{equation}
		where $A\in\R^{d\times m}$ and $b\in\R^d$.   Such \emph{moment-constrained ambiguity sets} arise naturally when partial statistical information, e.g., moments or marginals, is available; see \cite{MR4362585,MR3294550}.

		We assume that the ambiguity set $\mathcal P$ admits a strictly positive feasible point, i.e., there exists $\bar p\in\mathcal P$
		such that $\bar p_i>0$ for all $i=1,\ldots,m$ (equivalently, $\operatorname{ri}(\mathcal P)\cap \mathbb{R}^m_{++}\neq\emptyset$).
		\begin{remark}\label{rem:interiority_DRO}
			Under this assumption, for every $\mu>0$ and $x$, the maximizer $p^\mu(x)$ of the entropically regularized problem
			belongs to $\operatorname{ri}(\mathcal P)$ (in particular, $p_i^\mu(x)>0$ for all $i$), so the nonnegativity constraints
			are inactive at $p^\mu(x)$. Consequently, the normal cone contribution associated with $p\ge 0$ vanishes, and the
			optimality condition \eqref{eq:opt_DR} reduces to the KKT system for the equality constraints $Ap=b$ and
			$\langle \mathbbm 1,p\rangle=1$.
		\end{remark}
		\noindent The first-order optimality condition for the maximizer $p^\mu(x)$ can be written in variational form as
		\begin{equation}\label{eq:opt_DR}
			\frac{1}{\mu}f(x)\ \in\ \nabla \mathcal{D}\bigl(p^\mu(x)\bigr)+N_{\mathcal{P}}\bigl(p^\mu(x)\bigr),
		\end{equation}
		where $f(x):=(f_1(x),\ldots,f_m(x))\in\mathbb R^m$, $(\nabla \mathcal{D}(p))_i = 1+\log(mp_i)$ and $N_{\mathcal{P}}$ denotes the normal cone to the set $\mathcal{P}$. For \eqref{eq:amb_set}, the normal cone contains contributions from both the equality constraints and the nonnegativity constraints. Under the strict feasibility assumption in Remark~\ref{rem:interiority_DRO}
		(which ensures $\operatorname{ri}(\mathcal P)\cap \mathbb R^m_{++}\neq\emptyset$),
		the maximizer $p^\mu(x)$ satisfies $p^\mu(x)\in \operatorname{ri}(\mathcal P)$; hence the inequality constraints
		$p\ge 0$ are inactive at $p^\mu(x)$ and the KKT system involves only the equality constraints. In that case, there exist multipliers $(\theta^\mu(x),\eta^\mu(x))\in\R^d\times\R$ such that
		$$
		f_i(x)-\mu\left(1+\log(m p_i^{\mu}(x))\right)-\left(A^{\top}\theta^{\mu}(x)\right)_i-\eta^{\mu}(x)=0, \quad i=1,\ldots,m, 
		$$
		which yields the exponential-tilting representation
		\[
		p_i^\mu(x)
		=\frac{\exp\!\left(\frac{f_i(x)-(A^\top\theta^\mu(x))_i}{\mu}\right)}
		{\sum_{j=1}^m \exp\!\left(\frac{f_j(x)-(A^\top\theta^\mu(x))_j}{\mu}\right)},
		\qquad i=1,\ldots,m,
		\]
		where $\theta^\mu(x)$ is chosen so that $A p^\mu(x)=b$ holds. Finally, the regularized value admits the explicit expression
		\begin{equation*}
			\varphi_\mu(x)=\sum_{i=1}^{m}p_i^\mu(x)\, f_i(x)\;-\;\mu\sum_{i=1}^{m}p_i^\mu(x)\log\!\bigl(m p_i^\mu(x)\bigr).
		\end{equation*}
		This places KL-regularized DRO within our smoothing framework and allows us to study inertial dynamics driven by $\nabla\varphi_{\mu(t)}$ with a time-vanishing regularization parameter.
		
		To  numerically illustrate our approach, we consider a DRO instance with $m$ scenarios and a decision variable $x\in\R^n$.  The scenario-dependent cost functions are defined as
		\[f_i(x) = \frac{1}{2}(x-d_i)^\top \mathcal{S}_i (x-d_i) + e_i, \quad i=1,\ldots,m,
		\]
		where $d_i \in \R^n$, $\mathcal{S}_i \in \R^{n\times n}$ is symmetric positive definite, and $e_i \in \R$, for each $i = 1,\ldots,m$. The parameters are generated as follows. For each $i=1,\ldots,m$, we generate a vector $s_i \in \R^{n}$ whose entries are independently and uniformly distributed on $[\tfrac{1}{2}, 2]$, and define $\mathcal{S}_i \in \R^{n\times n}$ as the diagonal matrix with diagonal $s_i$. The vectors $d_i$ are sampled independently from a standard normal distribution in $\R^n$, and the scalars $e_i$ are drawn independently from the uniform distribution on $[-0.2,0.2]$.    We take $n=5$, $m=6$, and consider the ambiguity set \eqref{eq:amb_set}  with  
		\[A= \begin{pmatrix}
			1 & -1 & 0 & 0 & 0 & 0\\
			0 & 1 &-1 & 0 & 0 & 0\\
		\end{pmatrix},\quad b = \begin{pmatrix}
			0 \\ 0
		\end{pmatrix}.\]
		We then solve the inertial system \eqref{eq:inertial} on $t\in[1,20]$ with $\alpha=3.1$, initial conditions $x(1)=0$ and $\dot{x}(1)=0$, and $\mu(t)=t^{-r}$ with $r \in \left\lbrace 2.1, 3, 5 \right\rbrace$. Figure~\ref{fig:sols_DR} reports the corresponding trajectories and the decay of the residual $\vert \varphi_{\mu(t)}(x(t))-\inf \varphi \vert $, in qualitative agreement with the $\mathcal{O}(t^{-2})$ behavior established in Theorem~\ref{t:rates_inertial}.
		\begin{figure}[h]
			\centering
			\includegraphics[width=0.65\textwidth]{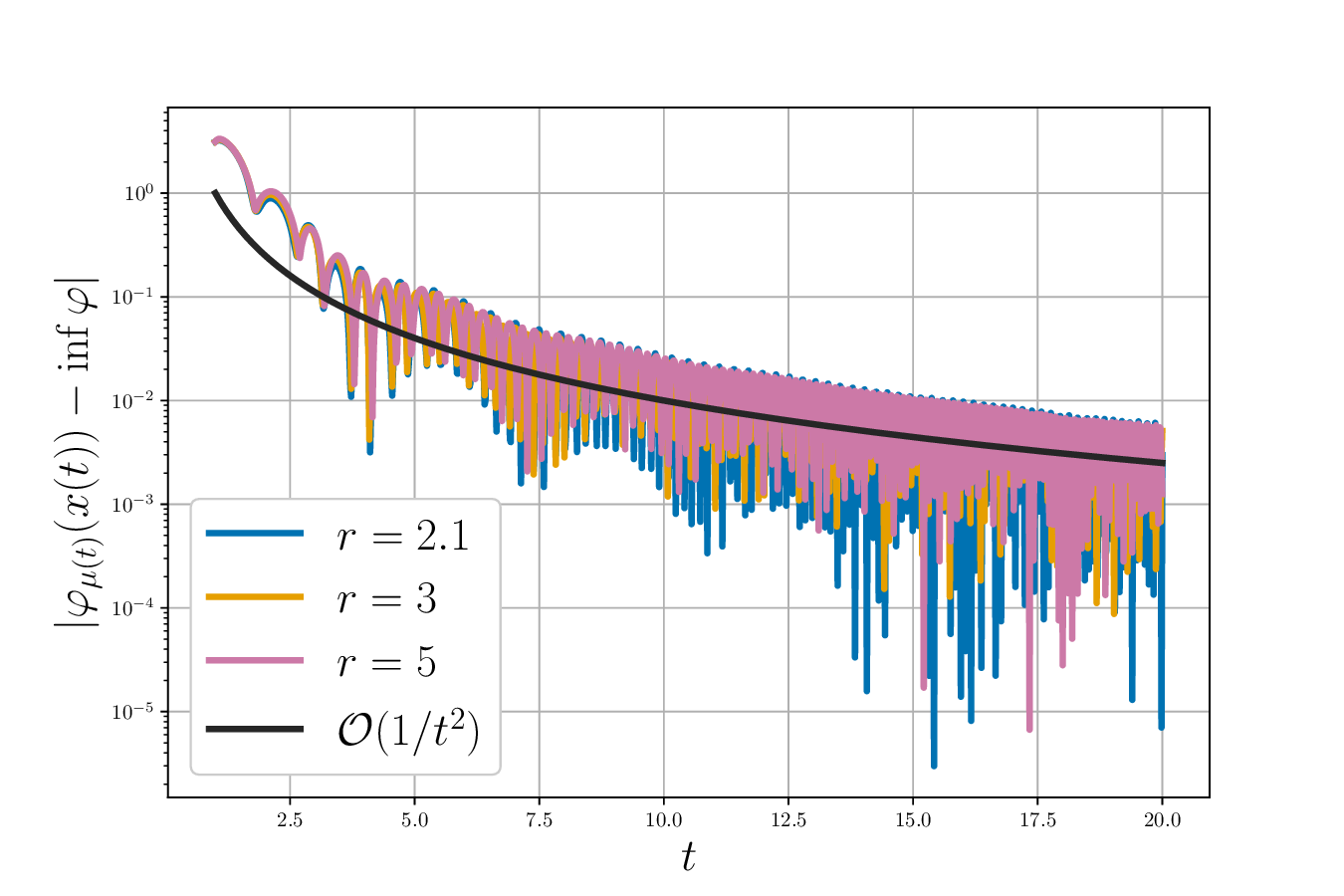}
			\caption{DRO experiment: residual $\vert \varphi_{\mu(t)}(x(t))-\inf \varphi\vert$ along the solution of \eqref{eq:inertial} with KL regularization.}
			\label{fig:sols_DR}
		\end{figure}
		\section{Concluding Remarks}
		
		This work develops a penalty-based smoothing framework for convex supremum functions  and investigates accelerated inertial dynamics driven by a \emph{vanishing} regularization parameter. By introducing the regularized family $\varphi_\mu$, the supremum problem is replaced by a smooth convex surrogate whose gradient admits an explicit envelope formula. The resulting approximation is quantitatively controlled from below, with a uniform error of order $\mu$, and the Lipschitz modulus of $\nabla \varphi_\mu$ scales as $1/\mu$, making explicit the classical smoothness-accuracy tradeoff. The main dynamical contribution is the analysis of the nonautonomous inertial system
		\[
		\ddot x(t)+\frac{\alpha}{t}\dot x(t)+\nabla \varphi_{\mu(t)}(x(t))=0,
		\]
		where $\alpha\ge 3$ and $\mu(t)\downarrow 0$. Although the time dependence of $\mu(t)$ breaks the usual monotonicity of standard Lyapunov functionals and the reference minimizer of $\varphi$ is generally not a minimizer of $\varphi_{\mu(t)}$, the paper shows that accelerated rates can still be recovered under natural integrability conditions on $\mu$ and $\dot\mu$. In particular, the  residual satisfies a decay bound of order $\mathcal O(t^{-2})$, and, when $\alpha>3$, one obtains the sharper estimate
		\[
		\varphi_{\mu(t)}(x(t))-\inf \varphi=o(t^{-2}),
		\]
		together with weak convergence of trajectories to a minimizer of $\varphi$ via an Opial-type argument. These accelerated rates improve upon those obtained for the associated first-order dynamics, namely the classical gradient flow driven by the same regularized objective (see Appendix \ref{gradient} for completeness and comparison).
		
		We show that the framework naturally covers max-type scalarizations in multiobjective optimization, as well as finite-scenario distributionally robust optimization. From an applied viewpoint, the results support the idea that acceleration and smoothing can be coupled in continuous time without sacrificing asymptotic consistency with the original nonsmooth problem, provided that the regularization parameter vanishes sufficiently fast.
		
		Several directions emerge for future research. First, it would be of interest to develop \emph{discrete-time} counterparts with explicit complexity guarantees, including accelerated schemes with a nonincreasing regularization schedule $\mu_k$ and step sizes that reflect the $1/\mu_k$ smoothness growth. Second, while the present analysis yields weak convergence in Hilbert spaces, stronger convergence (trajectory selection) may be achievable under additional structure, for instance by combining the dual regularization considered here with a vanishing Tikhonov term in the primal variable, or under growth/strong convexity assumptions on $\varphi$. Third, extending the framework beyond finite maxima (for example to supremum problems with infinite index sets or integral representations arising in stochastic and robust optimization as in \cite{MR4279933}) would require addressing measurability and compactness issues while preserving tractable smooth surrogates. Finally, incorporating constraints on $x$ (via projections or differential inclusions), nonsmooth composite structure in the $g_i$, or stochastic perturbations in the dynamics are natural avenues that could broaden the algorithmic scope of the penalty-based approach.
		
		Overall, the proposed penalty-driven viewpoint provides a flexible bridge between the variational structure of supremum functions and the accelerated dynamics literature, and it offers a principled template for designing and analyzing accelerated methods for broad classes of nonsmooth convex models.

		\paragraph{Acknowledgements}
		The authors were supported by ANID (Chile) through Fondecyt Regular grants No.~1240120 and No.~1220886 (E.~Vilches), CMM BASAL funds for the Center of Excellence FB210005 (E.~Vilches and J.-J.~Maul\'en), project ECOS230027 (E.~Vilches), MATH-AmSud 23-MATH-17 (E.~Vilches), and Fondecyt Postdoctoral grant No.~3250609 (J.-J.~Maul\'en). The first author is supported by the Math AmSud project N°51756TF (VIPS), the ECOS Project C24E06 and the FMJH Gaspard Monge Program for optimization and data science.

		\bibliographystyle{abbrv}
		\bibliography{ref}
		
		\appendix
		\renewcommand{\thesection}{\Alph{section}}
		
		\section{Appendix}

		\subsection{Examples}\label{Ejemplos-1000}
		\begin{example}\label{ex:simplex} Let $Q=\Delta_m$, the $m$-dimensional simplex in $\R^m$. Then 
			\[
			\varphi(x)=\max_{i =1,\ldots,m} g_i(x).
			\]
			Different smooth approximations $\varphi_\mu$ are obtained depending on the choice of the penalty $\mathcal{D}$.
			\begin{itemize}
				\item \textbf{Entropic (log-sum-exp) smoothing}
				Let $\mathcal{D}(\lambda)=\operatorname{KL}(\lambda\|u):=\sum_{i=1}^m \lambda_i\log(m\lambda_i)$ be the Kullback-Leibler divergence with respect to the uniform distribution $u=(1/m,\ldots,1/m)\in\Delta_m$. Then
				\[
				\varphi_\mu(x)
				=\mu\log\!\left(\frac{1}{m}\sum_{i=1}^m \exp\!\left(\frac{g_i(x)}{\mu}\right)\right),
				\]
				and one may take $\mathcal{C}=\log m$. This is the classical \texttt{logsumexp} approximation of the maximum. As a simple illustration, consider $\varphi(x)=\max\{x^2+1,\exp(x)\}$. Figure~\ref{fig:ex_logsum} displays $\varphi$
				together with $\varphi_\mu$ for several values of $\mu$. As noted in Remark~\ref{r:minimums}, although $\varphi$ has the
				unique minimizer $x^{\ast}=0$, the minimizer of $\varphi_\mu$ is generally shifted (here, it occurs at a smaller value of $x$).
				
				\item \textbf{Quadratic proximal smoothing} 
				Let $\mathcal{D}(\lambda)=\frac12\|\lambda-u\|_2^2$ with some $u\in\Delta_m$. Then
				\[
				\varphi_\mu(x)
				=\langle g(x),u\rangle
				+\frac{1}{2\mu}\|g(x)\|_2^2
				-\frac{\mu}{2}\,\operatorname{dist}_{\Delta_m}^2\!\left(u+\frac{g(x)}{\mu}\right),
				\]
				and one may take $\mathcal{C}:=\frac12\bigl(1+\|u\|_2^2-2\min_i u_i\bigr)$. In particular, for the uniform choice
				$u_i=1/m$ one has $\mathcal{C}=\frac12\left(1-\frac{1}{m}\right)$. Figure~\ref{fig:ex_simplex} compares
				$\varphi(x)=\max\{x^2+1,\exp(x)\}$ with $\varphi_\mu$ for several values of $\mu$ in the case $u=(1/2,1/2)$.
			\end{itemize}
			\begin{figure}[h]
				\centering
				\subfigure[$\varphi$ and $\varphi_{\mu}$ for entropic (log-sum-exp) smoothing]{\includegraphics[width=0.48\textwidth]{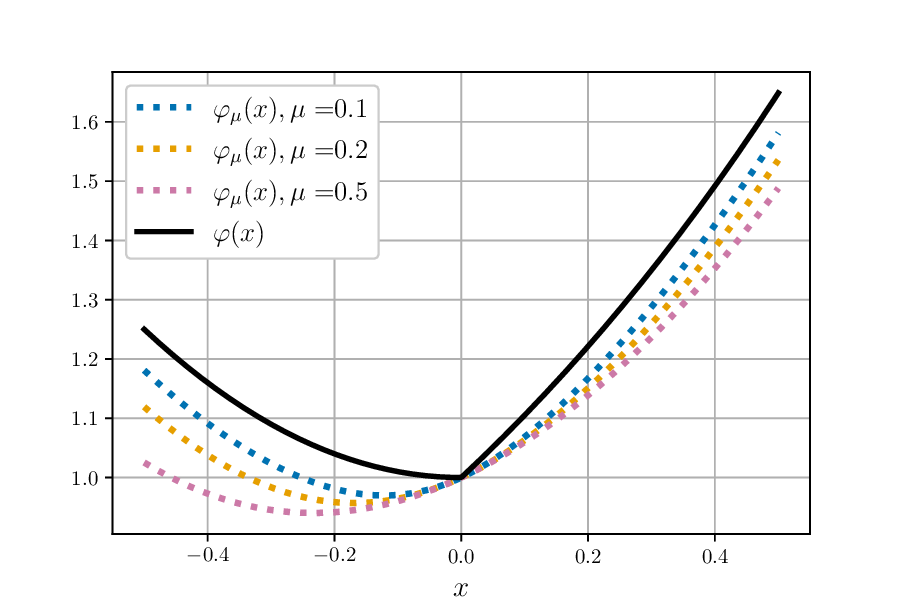} \label{fig:ex_logsum}}
				\subfigure[$\varphi$ and $\varphi_{\mu}$ for quadratic proximal smoothing]{\includegraphics[width=0.48\textwidth]{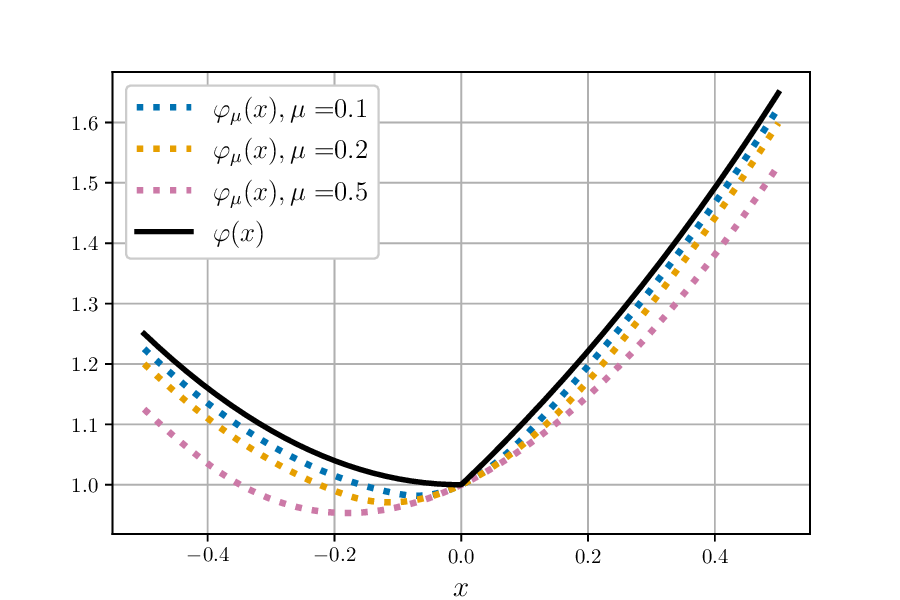}\label{fig:ex_simplex}}
				\caption{Entropic and  proximal smoothing for $g_1(x)=x^2+1$ and $g_2(x)=\exp(x)$ in Example \ref{ex:simplex}.}
			\end{figure}
		\end{example}

		\begin{example}\label{ex:box} Let $\ell, u \in \R^m$ and consider the box 
			$$
			Q=[\ell,u]:=\{\lambda\in \mathbb{R}^m \colon \ell_i\leq \lambda_i\leq u_i, \, i=1,\ldots,m\},  \textrm{ with } 0 \leq \ell_i\leq u_i, \quad i= 1,\ldots,m.
			$$ Then, for $g(x)=(g_1(x),\ldots,g_m(x))\in \mathbb{R}^m$, 
			\[\varphi(x)=\sup_{\lambda \in Q} \langle \lambda, g(x)\rangle =\sum_{i=1}^m \left(u_i\max\{g_i(x),0\}+\ell_i \min\{g_i(x),0\}\right).\]
			Fix any $c\in [\ell,u]$ and set $\mathcal{D}(\lambda)=\frac{1}{2}\Vert \lambda-c\Vert^2_2$. The corresponding regularization admits the closed form
			$$\varphi_{\mu}(x):=\langle g(x),c\rangle+\frac{1}{2\mu}\Vert g(x)\Vert^2_2-\frac{\mu}{2}\operatorname{dist}_{[\ell,u]}^2\left(c+\frac{g(x)}{\mu}\right). 
			$$
			Moreover, one may take $\mathcal{C}:=\frac{1}{2}\sum_{i=1}^m \max\{(\ell_i-c_i)^2,(u_i-c_i)^2\}$.  Figure \ref{fig:ex_box} depicts, for an illustrative instance, the original function $\varphi$ and its regularization $\varphi_{\mu}$ for different values of $\mu$. 
			\begin{figure}[ht]
				\centering
				\includegraphics[width=0.65\linewidth]{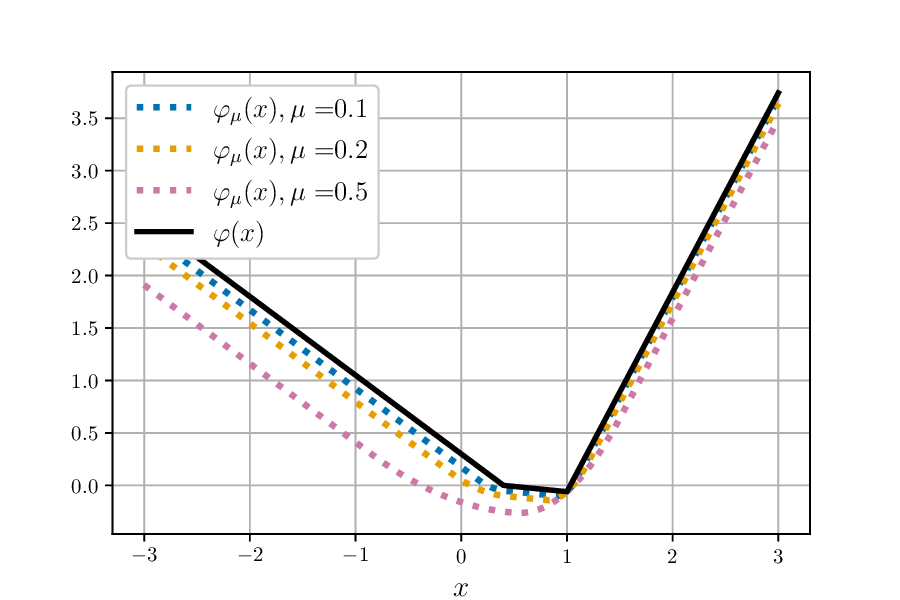}
				\caption{$\varphi$ and $\varphi_{\mu}$ in Example \ref{ex:box} for $g_1(x) = x -1$,  $g_2(x) = -\frac{1}{2}x+\frac{1}{5}$, $Q=[0,2]\times [\frac{1}{5},\frac{3}{2}]$ and $c=(1,\frac{1}{4})$.}
				\label{fig:ex_box}
			\end{figure}
		\end{example}

		\begin{example}
			Let $1\le p\le \infty$ and set $Q=B_p:=\{\lambda\in\mathbb{R}^m:\ \|\lambda\|_p\le 1\}$.  Then the supremum function reduces to the dual norm $\varphi(x)=\|g(x)\|_q$, where $q$ is the dual exponent of $p$, i.e., $1/p+1/q=1$ (with the convention $q=\infty$ if $p=1$ and $q=1$ if $p=\infty$).
			Choosing the quadratic penalty $\mathcal{D}(\lambda)=\tfrac12\|\lambda\|_2^2$, the associated regularization admits the closed form
			\[
			\varphi_\mu(x)
			:=\frac{1}{2\mu}\|g(x)\|_2^2-\frac{\mu}{2}\,\operatorname{dist}_{B_p}^2\!\left(\frac{g(x)}{\mu}\right).
			\]
			In this case, one may take
			\[
			\mathcal{C}:=\frac12
			\begin{cases}
				1, & 1\le p\le 2,\\[2mm]
				m^{\,1-\frac{2}{p}}, & 2\le p\le \infty.
			\end{cases}
			\]
		\end{example}
		
		\begin{example}
			Consider $Q=\operatorname{conv}\{a_1,\ldots,a_K\}\subset \mathbb{R}^m$, where $a_k\in \mathbb{R}^m$, and set $A=[a_1 \cdots a_K]\in \mathbb{R}^{m\times K}$. Every $\lambda \in Q$ can be written as $\lambda =A\alpha$ for some $\alpha\in \Delta_K:=\{\alpha\in \mathbb{R}^K_+: \sum_{k=1}^K \alpha_k=1\}$.   Fix a prior $v\in \operatorname{ri}(\Delta_K)$ and define the pushforward Kullback–Leibler divergence on $Q$ by
			$$
			\mathcal{D}(\lambda):=\min\{ \operatorname{KL}(\alpha\Vert v)\colon \alpha \in \Delta_K, A\alpha=\lambda\}, \quad  \operatorname{KL}(\alpha\Vert v):=\sum_{k=1}^K \alpha_k \log\left(\frac{\alpha_k}{v_k}\right).
			$$ 
			Then, 
			$$
			\varphi(x)=\max_{k=1,\ldots,K} \langle a_k,g(x)\rangle \textrm{ and } \varphi_{\mu}(x):=\mu \log\left(\sum_{k=1}^K v_k \exp\left(\frac{\langle a_k,g(x)\rangle}{\mu}\right)\right).
			$$
			In this setting, one may take $\mathcal{C}:=-\log(\min_i v_i)$. In particular, for the uniform prior $v_k=\frac{1}{K}$,
			$$
			\varphi_{\mu}(x):=\mu \log\left(\frac{1}{K}\sum_{k=1}^K \exp\left(\frac{\langle a_k,g(x)\rangle}{\mu}\right)\right) \textrm{ and } \mathcal{C}=\log K.
			$$
		\end{example}

		\subsection{Proof of Proposition \ref{p:Lipschitz}.}\label{prueba-Lipschitz}
		By Proposition \ref{p:properties},
		\[\nabla \varphi_\mu(x) = \sum_{i=1}^{m}\nabla g_i(x)\lambda^\mu_i(x),\]
		where $\lambda^\mu(x) = (\lambda^\mu_1(x),\ldots,\lambda^\mu_m(x))$ is the unique maximizer of the function $\lambda \mapsto \lambda^\top g(x) - \mu \mathcal{D}(\lambda)$ over $Q$. Hence, for any $x$, $y \in B$,  
		\[\Vert \nabla\varphi_\mu(x) - \nabla\varphi_\mu(y) \Vert  \leq \left\Vert \sum_{i=1}^{m}\left( \nabla g_i(x) - \nabla g_i(y) \right)\lambda^\mu_i(x) \right\Vert + \left\Vert \sum_{i=1}^{m}\nabla g_i(y)\left( \lambda^\mu_i(x) - \lambda^\mu_i(y) \right)\right\Vert.\]
		We bound these two terms separately. First, using the triangle inequality and the Lipschitz continuity of $\nabla g_i$ on $B$,
		\[\left\Vert \sum_{i=1}^{m}\left( \nabla g_i(x) - \nabla g_i(y) \right)\lambda^\mu_i(x) \right\Vert \leq \sum_{i=1}^{m}\left\Vert \nabla g_i(x) - \nabla g_i(y)\right\Vert \vert \lambda_i^\mu(x) \vert \leq M_Q L_g \Vert x - y \Vert.\]
		For the second term, the optimality condition for $\lambda^\mu(x)$ and $\lambda^\mu(x)$  yield, for every $\lambda \in Q$, 
		\begin{align*}
			\left\langle \lambda - \lambda^\mu(x), g(x) - \mu\nabla\mathcal{D}(\lambda^\mu(x)) \right\rangle \leq 0, \quad 
			\left\langle \lambda - \lambda^\mu(y), g(y) - \mu\nabla\mathcal{D}(\lambda^\mu(y)) \right\rangle \leq 0. 
		\end{align*}
		Taking $\lambda = \lambda^\mu(y)$ in the first inequality and $\lambda = \lambda^\mu(x)$ in the second  and adding them gives 
		\[ \left\langle\lambda^\mu(x)- \lambda^\mu(y), g(y) - g(x) + \mu\left(\nabla\mathcal{D}(\lambda^\mu(x)) - \nabla\mathcal{D}(\lambda^\mu(y))  \right)  \right\rangle \leq 0.\]
		By the Cauchy-Schwarz inequality,
		\[\mu\left\langle \nabla\mathcal{D}(\lambda^\mu(x)) - \nabla\mathcal{D}(\lambda^\mu(y)), \lambda^\mu(x)- \lambda^\mu(y)  \right\rangle \leq \Vert g(x) - g(y)\Vert_2 \Vert\lambda^\mu(x)- \lambda^\mu(y) \Vert_2. \]
		Since $\mathcal{D}$ is $\sigma$-strongly convex (and differentiable), we have 
		\[\mu\sigma \Vert  \lambda^\mu(x)- \lambda^\mu(y)\Vert^2_2 \leq \Vert g(x) - g(y)\Vert_2 \Vert\lambda^\mu(x)- \lambda^\mu(y) \Vert_2, \]
		and therefore 
		\[  \Vert  \lambda^\mu(x)- \lambda^\mu(y)\Vert_2 \leq \frac{1}{\mu \sigma}\Vert g(x) - g(y)\Vert_2. \]
		It remains to bound $\Vert g(x) - g(y)\Vert_2$. For each $i$, using the fundamental theorem of calculus along the segment $y+t(x-y)\subset B$ (since $B$ is convex), 
		\[\vert g_i(x) - g_i(y) \vert = \left\vert \int_{0}^1 \left\langle \nabla g_i(y + t(x-y)), x-y\right\rangle \, dt\right\vert \leq \sup_{z \in B} \Vert\nabla g_i(z) \Vert_2 \Vert x-y \Vert_2. \]
		Thus,
		\[\Vert g(x) - g(y) \Vert_2 \leq \left(\sum_{i=1}^{m}\sup_{z \in B} \Vert \nabla g_i(z)\Vert^2\right)^{1/2} \Vert x - y\Vert=G_B\Vert x-y\Vert,\]
		and consequently,
		\[ \Vert  \lambda^\mu(x)- \lambda^\mu(y)\Vert_2 \leq \dfrac{G_B}{\mu\sigma }\Vert x-y \Vert.\]
		Finally, 
		\begin{align*}
			\left\Vert \sum_{i=1}^{m}\nabla g_i(y)\left( \lambda^\mu_i(x) - \lambda^\mu_i(y) \right)\right\Vert \leq \Vert \lambda^\mu(x)- \lambda^\mu(y)\Vert_2 \left(\sum_{i=1}^{m}\Vert \nabla g_i(y)\Vert^2\right)^{1/2} 
			\leq\Vert  \lambda^\mu(x)- \lambda^\mu(y)\Vert_2 G_B,
		\end{align*}
		so that
		$$
		\left\Vert \sum_{i=1}^{m}\nabla g_i(y)\left( \lambda^\mu_i(x) - \lambda^\mu_i(y) \right)\right\Vert  \leq \frac{G_B^2}{\mu \sigma}\Vert x-y\Vert.
		$$
		Combining both bounds yields the claim. \qed
		
		\subsection{Proof of Theorem \ref{prop:global}}\label{thm3.1}
		To prove Theorem~\ref{prop:global}, we proceed as follows. Fix an arbitrary compact time interval
		$[t_0,T]\subset \mathbb{R}_+$. We first obtain a unique maximal solution, in the sense that it is defined on a maximal subinterval $[t_0,\tau_{\max})$ with $\tau_{\max}\le T$ and cannot be extended, as a solution on $[t_0,T]$, beyond $\tau_{\max}$. Next, we show that this solution can in fact be extended up to $T$. Since $T>t_0$ is arbitrary, this yields a
		solution defined for all $t\ge t_0$.
		
		\begin{proposition}\label{prop:local-wp}
			Assume that (SH) holds. Fix $t_0>0$ and $T>t_0$ with $T<+\infty$. Assume that $\mu \colon [t_0,T]\to(0,+\infty)$ is continuous.  Then, for any initial conditions $x(t_0)=x_0\in\mathcal H$ and $\dot x(t_0)=y_0\in\mathcal H$, there exists a maximal solution $x\in C^{2}\big([t_0,\tau_{\max});\mathcal H\big)$ to  \eqref{eq:inertial}, defined on $[t_0,\tau_{\max})$, where  $\tau_{\max}\in (t_0,T]$.
		\end{proposition}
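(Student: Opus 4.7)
The plan is to reduce the second-order system \eqref{eq:inertial} to a first-order problem in the phase space $\mathcal{H}\times\mathcal{H}$ and apply a Cauchy--Lipschitz (Picard--Lindel\"of) argument for vector fields that are continuous in time and locally Lipschitz in space, uniformly on bounded sets. Concretely, I would set $X(t):=(x(t),\dot x(t))$ and define
\[
F\colon [t_0,T]\times \mathcal{H}\times\mathcal{H}\to \mathcal{H}\times\mathcal{H}, \qquad F(t,x,y):=\Bigl(y,\ -\tfrac{\alpha}{t}\,y-\nabla\varphi_{\mu(t)}(x)\Bigr),
\]
so that \eqref{eq:inertial} is equivalent to $\dot X(t)=F(t,X(t))$ with $X(t_0)=(x_0,y_0)$. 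The task then reduces to checking that $F$ falls within the hypotheses of the classical existence/uniqueness theorem for ODEs in Banach spaces, and then invoking a standard maximal-extension argument.

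First I would verify the needed regularity of $F$. Since $\mu$ is continuous and strictly positive on the compact interval $[t_0,T]$, the quantity $\mu_{\min}:=\min_{t\in[t_0,T]}\mu(t)$ is strictly positive. For any closed bounded set $B\subset\mathcal{H}$, Proposition~\ref{p:Lipschitz} then shows that $x\mapsto \nabla\varphi_{\mu(t)}(x)$ is Lipschitz on $B$ with constant at most $M_Q L_g+G_B^{2}/(\mu_{\min}\sigma)$, uniformly in $t\in[t_0,T]$. Combined with the trivial Lipschitz bound $\alpha/t_0$ for the damping term $y\mapsto -(\alpha/t)y$, this yields local Lipschitz continuity of $F(t,\cdot)$ on bounded subsets of $\mathcal{H}\times\mathcal{H}$, uniform in $t\in[t_0,T]$. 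Continuity in $t$ for fixed $X$ follows from continuity of $\mu$ together with joint continuity of $(\mu,x)\mapsto \nabla\varphi_{\mu}(x)=\sum_i \lambda_i^{\mu}(x)\nabla g_i(x)$; the latter is obtained from the envelope formula in Proposition~\ref{p:properties}(ii) and the continuity of the (uniquely determined) maximizer $\mu\mapsto \lambda^{\mu}(x)$, which is a standard consequence of strong concavity of the dual objective and Berge's maximum theorem.

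With these properties in hand, the Cauchy--Lipschitz theorem for Carath\'eodory ODEs in a Hilbert space yields a unique local $C^{1}$ solution $X$ on some interval $[t_0,t_0+\delta]$, and the second component of the equation gives $\ddot x\in C([t_0,t_0+\delta];\mathcal{H})$, whence $x\in C^{2}$. The standard concatenation/Zorn extension argument then produces a unique maximal solution defined on some right-open interval $[t_0,\tau_{\max})\subset[t_0,T]$; uniqueness on every subinterval prevents any branching, so the maximal solution is truly unique. The main technical point to be careful with is the uniform-in-$t$ local Lipschitz bound, which rests on the fact that $1/\mu(t)$ remains under control on $[t_0,T]$; this is exactly where the restriction to a compact time window (as opposed to directly on $[t_0,\infty)$) is needed, and it is also the reason why extending $\tau_{\max}$ up to $T$ must be carried out separately in the next step of the well-posedness analysis.
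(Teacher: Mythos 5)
Your proposal is correct and follows essentially the same route as the paper: reduce \eqref{eq:inertial} to a first-order system in phase space, use Proposition~\ref{p:Lipschitz} together with $\mu_{\min}:=\min_{[t_0,T]}\mu>0$ to get a uniform-in-$t$ local Lipschitz bound, check continuity in $t$, and invoke Cauchy--Lipschitz--Picard plus a standard maximal-extension argument. You actually supply more detail than the paper on the time-continuity of $t\mapsto\nabla\varphi_{\mu(t)}(x)$ (via Berge's maximum theorem and uniqueness of the maximizer), which the paper simply asserts; this is a welcome elaboration rather than a different method.
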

		\begin{proof} Fix $t_0>0$ and $T>t_0$ with $T<+\infty$. Setting $y:=\dot{x}$, the system \eqref{eq:inertial} is equivalent to 
			\begin{equation}\label{sistema-lineal}
				\frac{d}{dt}\binom{x}{y}=
				\binom{y}{-\frac{\alpha}{t}y-\nabla \varphi_{\mu(t)}(x)}
			\end{equation}
			with initial conditions $x(t_0)=x_0$, $y(t_0)=y_0$. Moreover, by  Proposition~\ref{p:Lipschitz}, for each fixed $t\in[t_0,T]$, the map $x\mapsto \nabla \varphi_{\mu(t)}(x)$ is locally Lipschitz and locally bounded. Since $\mu$ is continuous, for each fixed $x\in \mathcal{H}$ the map $t\mapsto \nabla \varphi_{\mu(t)}(x)$ is continuous. Hence, by the Cauchy-Lipschitz-Picard theorem, there exists a maximal solution $x\in C^{2}\big([t_0,\tau_{\max});\mathcal H\big)$ to  \eqref{eq:inertial}, defined on $[t_0,\tau_{\max})$, where  $\tau_{\max}\in (t_0,T]$.  
		\end{proof}
		Fix $T>t_0$ and denote by $x_T(t)$ the maximal solution of \eqref{eq:inertial} over the interval $[t_0,\tau_{\max})$ given by Proposition~\ref{prop:local-wp}. Fix $x^{\ast} \in \argmin \varphi$ and consider the energy functional 
		$$
		\mathcal{E}_T(t):=\frac{1}{2}\Vert v_T(t)\Vert^2+Z_T(t),
		$$
		where 
		\begin{equation*}
			v_T(t):=x_T(t)-x^{\ast}+\frac{t}{\alpha-1}\dot{x}_T(t)\quad \textrm{ and } \quad Z_T(t):=\frac{t^2}{(\alpha-1)^2}(\varphi_{\mu(t)}(x_T(t))-\inf\varphi). 
		\end{equation*}
		Directly from the definition, together with \eqref{eq:prop1}, we obtain the lower bound
		\begin{equation}\label{eq:lbound_E_finite}
			\mathcal{E}_T(t)\geq \frac{t^2}{(\alpha-1)^2}(\varphi_{\mu(t)}(x_T(t))-\inf\varphi)\geq -\frac{\mathcal{C}}{(\alpha-1)^2}t^2\mu(t).
		\end{equation}
		First, we provide some estimations over $\mathcal{E}_T(t)$.
		\begin{proposition} Let $\mu\colon [t_0,T]\to(0,+\infty)$ be nonincreasing and continuously differentiable, and let $x_T(\cdot)$ be the solution of \eqref{eq:inertial} over the interval $[t_0,\tau_{\max})$ with $\alpha\ge 3$. Then, for every $t \in [t_0,\tau_{\max})$,
			\begin{equation}\label{eq:energy_deriv_finite}
				\dot{\mathcal{E}}_T(t) \leq \frac{\mathcal{C}(\alpha-3)}{(\alpha-1)^2}t\mu(t)+\frac{\mathcal{C}t^2}{(\alpha-1)^2}\vert \dot{\mu}(t)\vert .  
			\end{equation}    
		\end{proposition}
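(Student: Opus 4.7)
The plan is to reproduce the Lyapunov computation already performed in the global version of this proposition, adapted to the local time interval $[t_0,\tau_{\max})$ on which $x_T$ is defined. Since $x_T\in C^2([t_0,\tau_{\max});\mathcal H)$ and $\mu\in C^1([t_0,T])$ with $\mu>0$ and $\dot\mu\le 0$, all derivatives below are classical, so the derivation is straightforward.

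First, differentiate $v_T$ using \eqref{eq:inertial} to get
\begin{equation*}
\dot v_T(t)=\dot x_T(t)+\frac{1}{\alpha-1}\dot x_T(t)+\frac{t}{\alpha-1}\ddot x_T(t)
=\frac{\alpha}{\alpha-1}\dot x_T(t)+\frac{t}{\alpha-1}\Bigl(-\frac{\alpha}{t}\dot x_T(t)-\nabla\varphi_{\mu(t)}(x_T(t))\Bigr),
\end{equation*}
so that $\langle v_T(t),\dot v_T(t)\rangle=-\frac{t}{\alpha-1}\langle v_T(t),\nabla\varphi_{\mu(t)}(x_T(t))\rangle$. Then apply the chain rule together with Proposition~\ref{p:properties}(iii) to obtain
\begin{equation*}
\frac{d}{dt}\varphi_{\mu(t)}(x_T(t))=\langle\nabla\varphi_{\mu(t)}(x_T(t)),\dot x_T(t)\rangle-\dot\mu(t)\,\mathcal D(\lambda^{\mu(t)}(x_T(t))),
\end{equation*}
and plug this into $\dot Z_T(t)=\frac{2t}{(\alpha-1)^2}(\varphi_{\mu(t)}(x_T(t))-\inf\varphi)+\frac{t^2}{(\alpha-1)^2}\frac{d}{dt}\varphi_{\mu(t)}(x_T(t))$.

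Next, sum the two contributions. The cross term $\langle v_T,\nabla\varphi_{\mu(t)}(x_T)\rangle$ expands into $\langle x_T-x^{\ast},\nabla\varphi_{\mu(t)}(x_T)\rangle+\frac{t}{\alpha-1}\langle \dot x_T,\nabla\varphi_{\mu(t)}(x_T)\rangle$, and the second piece cancels the $\frac{t^2}{(\alpha-1)^2}\langle\nabla\varphi_{\mu(t)}(x_T),\dot x_T\rangle$ coming from $\dot Z_T$. What remains is
\begin{equation*}
\dot{\mathcal E}_T(t)=-\frac{t}{\alpha-1}\langle x_T(t)-x^{\ast},\nabla\varphi_{\mu(t)}(x_T(t))\rangle+\frac{2t}{(\alpha-1)^2}(\varphi_{\mu(t)}(x_T(t))-\inf\varphi)-\frac{t^2}{(\alpha-1)^2}\dot\mu(t)\,\mathcal D(\lambda^{\mu(t)}(x_T(t))).
\end{equation*}
Convexity of $\varphi_{\mu(t)}$ gives $\langle x_T(t)-x^{\ast},\nabla\varphi_{\mu(t)}(x_T(t))\rangle\ge \varphi_{\mu(t)}(x_T(t))-\varphi_{\mu(t)}(x^{\ast})\ge \varphi_{\mu(t)}(x_T(t))-\inf\varphi$, so, collecting the two value-gap terms and factoring, we obtain
\begin{equation*}
\dot{\mathcal E}_T(t)\le -\frac{\alpha-3}{(\alpha-1)^2}\,t\bigl(\varphi_{\mu(t)}(x_T(t))-\inf\varphi\bigr)-\frac{t^2}{(\alpha-1)^2}\dot\mu(t)\,\mathcal D(\lambda^{\mu(t)}(x_T(t))).
\end{equation*}

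Finally, to convert this into the claimed bound, use the lower approximation estimate \eqref{eq:prop1}, namely $\varphi_{\mu(t)}(x_T(t))-\inf\varphi\ge -\mathcal C\mu(t)$, together with $\mathcal D(\lambda^{\mu(t)}(x_T(t)))\le \mathcal C$ and the fact that $\dot\mu(t)\le 0$ so $-\dot\mu(t)=|\dot\mu(t)|$. This yields exactly \eqref{eq:energy_deriv_finite}. I do not anticipate any real obstacle: the argument is essentially a verbatim transcription of the proof of the analogous global statement appearing earlier in the manuscript, the only difference being that here the computation is performed on the maximal existence interval $[t_0,\tau_{\max})$ rather than $[t_0,+\infty)$. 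The mild care needed is simply to note that continuous differentiability of $x_T$ on $[t_0,\tau_{\max})$ and $C^1$ regularity of $\mu$ on $[t_0,T]$ justify every differentiation step, and that Proposition~\ref{p:properties}(iii) is what delivers the envelope formula for $\partial_\mu\varphi_\mu$ used in the chain rule.
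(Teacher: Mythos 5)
Your proposal is correct and follows essentially the same computation as the paper's proof: differentiate $v_T$ to get $\dot v_T=-\tfrac{t}{\alpha-1}\nabla\varphi_{\mu(t)}(x_T)$, differentiate $Z_T$ using the envelope formula from Proposition~\ref{p:properties}(iii), observe the cancellation of the $\langle\nabla\varphi_{\mu(t)}(x_T),\dot x_T\rangle$ terms, apply convexity to bound $\langle x_T-x^\ast,\nabla\varphi_{\mu(t)}(x_T)\rangle$, and finish with \eqref{eq:prop1}, $\mathcal D\le\mathcal C$, and $\dot\mu\le 0$. The only difference is cosmetic, so nothing further to add.
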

		\begin{proof}First, by using \eqref{eq:inertial}, we obtain that 
			\begin{equation*}
				\begin{aligned}
					\langle v_T(t),\dot{v}_T(t)\rangle =-\frac{t}{\alpha-1}\langle v_T(t),\nabla \varphi_{\mu(t)}(x_T(t))\rangle.      
				\end{aligned}
			\end{equation*}
			On the other hand, using \eqref{eq:prop3},
			\begin{align*}
				\dot{Z}_T(t) &=\frac{t^2}{(\alpha-1)^2}\left[\dot{\mu}(t)\dfrac{d}{d\mu}\varphi_{\mu(t)}(x_T(t)) +\langle \nabla \varphi_{\mu(t)}(x_T(t)),\dot{x}_T(t)\rangle \right]
				+\frac{2t}{(\alpha-1)^2}(\varphi_{\mu(t)}(x_T(t))-\inf\varphi) \\
				&=\frac{t^2}{(\alpha-1)^2}\left[-\dot{\mu}(t)\mathcal{D}(\lambda^{\mu(t)}(x_T(t))) +\langle \nabla \varphi_{\mu(t)}(x_T(t)),\dot{x}_T(t)\rangle \right]
				+\frac{2t}{(\alpha-1)^2}(\varphi_{\mu(t)}(x_T(t))-\inf\varphi)
			\end{align*}
			Therefore, 
			\begin{equation*}
				\begin{aligned}
					\dot{\mathcal{E}}_T(t)&=\langle v_T(t),\dot{v}_T(t)\rangle+\dot{Z}_T(t)\\
					&=-\frac{t}{\alpha-1}\langle x_T(t)-x^{\ast},\nabla \varphi_{\mu(t)}(x_T(t))\rangle +\frac{2t}{(\alpha-1)^2}(\varphi_{\mu(t)}(x_T(t))-\inf\varphi) \\
					&\quad -\frac{t^2}{(\alpha-1)^2}\dot{\mu}(t)\mathcal{D}(\lambda^{\mu(t)}(x_T(t)))
				\end{aligned}
			\end{equation*}
			Convexity of $x\mapsto \varphi_{\mu(t)}(x)$ yields 
			$$
			\langle x_T(t)-x^{\ast},\nabla \varphi_{\mu(t)}(x_T(t))\rangle \geq \varphi_{\mu(t)}(x_T(t))-\varphi_{\mu(t)}(x^{\ast})\geq \varphi_{\mu(t)}(x_T(t))-\inf\varphi.
			$$
			Using the previous and recalling that $\mathcal{C} = \sup_{\lambda \in Q}\mathcal{D}(\lambda)$ we obtain the upper bound
			\begin{equation*}
				\dot{\mathcal{E}}_T(t) \leq   -\frac{(\alpha-3)}{(\alpha-1)^2}t(\varphi_{\mu(t)}(x_T(t))-\inf\varphi)-\mathcal{C}\frac{t^2}{(\alpha-1)^2}\dot{\mu}(t)
			\end{equation*}
			Since $\mu$ is nonincreasing and $C^1$, we have $\dot\mu(t)\le 0$ for all $t\ge t_0$, hence $-\dot\mu(t)=|\dot\mu(t)|$. Since $x^*\in \argmin \varphi$ we have that, for all $t\in [t_0,\tau_{\max})$, 
			\[\varphi_{\mu(t)}(x_T(t)) - \inf\varphi \geq \varphi_{\mu(t)}(x_T(t)) - \varphi(x_T(t)) \geq -\mathcal{C}\mu(t).\]
			Thus, we obtain that 
			\begin{equation*}
				\dot{\mathcal{E}}_T(t) \leq \frac{\mathcal{C}(\alpha-3)}{(\alpha-1)^2}t\mu(t)+\frac{\mathcal{C}t^2}{(\alpha-1)^2}\vert \dot{\mu}(t)\vert.
			\end{equation*}       
		\end{proof}
		\begin{Lemma}\label{Lemmacota}
			Let $x_T(t)$ be the maximal solution of \eqref{eq:inertial} over the interval $[t_0,\tau_{\max})$. Then, for all $t \in [t_0,\tau_{\max})$, the quantity $v_T(t)$ remains bounded and 
			\[\Vert \dot{x}_T(t) \Vert \leq \dfrac{C_T}{t}, \]
			where $C_T$ a positive constant. 
		\end{Lemma}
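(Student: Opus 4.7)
The plan is to exploit the energy identity \eqref{eq:energy_deriv_finite} restricted to the compact window $[t_0,T]$ so as to turn it into a uniform \emph{a priori} bound on $\mathcal E_T$, and then to unpack the definition of $v_T$ to extract separate bounds on $\|v_T\|$, on $\|x_T-x^*\|$, and finally on $\|\dot x_T\|$. Since $\mu$ is continuous on $[t_0,T]$ and $\dot\mu$ is continuous on $[t_0,T]$, the quantities $\mu(t)$, $|\dot\mu(t)|$, $t\mu(t)$ and $t^2|\dot\mu(t)|$ are all bounded on $[t_0,T]$ by constants depending only on $T$ (and on the data), so the right-hand side of \eqref{eq:energy_deriv_finite} is integrable on $[t_0,T]$.

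First, I would integrate \eqref{eq:energy_deriv_finite} over $[t_0,t]$ for an arbitrary $t\in[t_0,\tau_{\max})$, to obtain an inequality of the form $\mathcal E_T(t)\le \mathcal E_T(t_0)+K_T$, where $K_T$ depends only on $T$, $\alpha$ and $\mathcal C$. Combined with the lower bound \eqref{eq:lbound_E_finite} and the fact that $t^2\mu(t)$ is bounded on $[t_0,T]$ by a constant, this yields a uniform bound $\tfrac12\|v_T(t)\|^2\le M_T$ for all $t\in[t_0,\tau_{\max})$, giving the boundedness of $v_T$ with a constant independent of $\tau_{\max}$.

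Next, to bound $\|x_T(t)-x^*\|$, I would mimic the argument in the proof of Proposition~\ref{p:estimations_traj}. Setting $\omega(t):=\|x_T(t)-x^*\|$ and using $\dot x_T(t)=\tfrac{\alpha-1}{t}(v_T(t)-(x_T(t)-x^*))$, the chain rule gives
\[
\frac{d}{dt}\!\left(\tfrac12\omega^2(t)\right)
=\frac{\alpha-1}{t}\bigl(\langle x_T(t)-x^*,v_T(t)\rangle-\omega^2(t)\bigr)
\le \frac{\alpha-1}{t}\bigl(M\omega(t)-\omega^2(t)\bigr),
\]
with $M:=\sup_{t\in[t_0,\tau_{\max})}\|v_T(t)\|<\infty$. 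A standard invariant-region argument then shows $\omega(t)\le\max\{\omega(t_0),M\}$ on $[t_0,\tau_{\max})$. Combining these bounds and substituting into $\dot x_T(t)=\tfrac{\alpha-1}{t}(v_T(t)-(x_T(t)-x^*))$ yields $\|\dot x_T(t)\|\le C_T/t$ with $C_T:=(\alpha-1)(M+\max\{\omega(t_0),M\})$.

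The main subtlety, rather than a technical obstacle, is to make sure every constant produced along the argument depends only on $T$ (and on $x_0$, $y_0$, $x^*$, $\alpha$, $\mathcal C$, and the sup-norms of $\mu$ and $\dot\mu$ on $[t_0,T]$), and not on $\tau_{\max}$; this is exactly what the compactness of $[t_0,T]$ and the continuity assumptions on $\mu$ provide, and it is the key feature that will later allow us to extend $x_T$ from $[t_0,\tau_{\max})$ up to $T$ by a standard blow-up alternative in the proof of global well-posedness.
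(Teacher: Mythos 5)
Your argument matches the paper's proof almost step for step: integrate the differential energy estimate \eqref{eq:energy_deriv_finite} on $[t_0,t]$, use the lower bound \eqref{eq:lbound_E_finite} together with the boundedness of $t^2\mu(t)$ on the compact window to control $\|v_T\|$, and then run the $\omega(t)=\|x_T(t)-x^*\|$ differential inequality (exactly as in Proposition~\ref{p:estimations_traj}) to conclude $\|\dot x_T(t)\|\le C_T/t$. This is correct and is the same approach the paper takes, so no further comment is needed.
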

		\begin{proof} 
			Since $\mu\colon [t_0,T]\to (0,+\infty)$ is continuously differentiable, the following properties hold:
			\begin{enumerate}[label=(\roman*)]
				\item $t \mapsto t^2\mu(t)$ is bounded;
				\item $t\mu(t) \in L^1(t_0,T)$; 
				\item $t^2|\dot{\mu}(t)| \in L^1(t_0,T)$.
			\end{enumerate}  
			Let $t \in [t_0,\tau_{\max})$. Integrating \eqref{eq:energy_deriv_finite} over $[t_0,t]$, we obtain
			\[ \mathcal{E}_T(t) \leq \mathcal{E}_T(t_0) + \mathcal{C}\frac{(\alpha-3)}{(\alpha-1)^2}\int_{t_0}^{t}s\mu(s)\,ds+\frac{\mathcal{C}}{(\alpha-1)^2}\int_{t_0}^{t}s^2\vert \dot{\mu}(s)\vert \,ds.\]
			Combining the previous with the lower bound \eqref{eq:lbound_E_finite} we obtain 
			\[    -\frac{\mathcal{C}}{(\alpha-1)^2}t^2\mu(t) \leq \frac{t^2}{(\alpha-1)^2}(\varphi_{\mu(t)}(x_T(t))-\inf\varphi)   \leq \vert \mathcal{E}_T(t_0)\vert + \mathcal{C}\frac{(\alpha-3)}{(\alpha-1)^2}\int_{t_0}^{T}s\mu(s)\,ds+\frac{\mathcal{C}}{(\alpha-1)^2}\int_{t_0}^{T}s^2\vert \dot{\mu}(s)\vert \,ds.\]
			Combined with (i)-(iii), we obtain that for every $t \in [t_0,\tau_{\max})$, $\vert  Z_T(t)\vert$ is bounded. This implies directly  that $v_T(t)$ is bounded. To lighten the notation, let us define
			\[\omega_T(t) = \Vert x_T(t) - x^*\Vert, \quad M_T = \sup_{t \in [t_0,T] } \Vert v_T(t) \Vert.\]
			We have
			$$
			\dot{x}_T(t)=\frac{\alpha-1}{t}\left(v_T(t)-(x_T(t)-x^{\ast})\right),
			$$
			which yields
			\begin{equation*}
				\frac{d}{dt}\left(\frac{1}{2}\omega_T^2(t)\right)=\frac{\alpha-1}{t}\langle x_T(t)-x^{\ast},v_T(t)\rangle-\frac{\alpha-1}{t}\Vert x_T(t)-x^{\ast}\Vert^2 
				\leq \frac{\alpha-1}{t}\left(M_T\omega_T(t)-\omega_T^2(t)\right).
			\end{equation*}
			Hence, for a.e. $t\geq 0$
			$$
			\frac{d}{dt}\omega_T(t)\leq \frac{\alpha-1}{t}\left(M_T-\omega_T(t)\right),
			$$
			which implies that $\omega_T(t)\leq M_T+(\omega_T(t_0)-M_T)\left(\frac{t_0}{t}\right)^{\alpha-1}$. Therefore, $\Vert \dot{x}_T(t)\Vert \leq \dfrac{{C}_T}{t}$. 
		\end{proof}
		\begin{Lemma} The maximal solution $x_T(t)$  of \eqref{eq:inertial} can be defined over all the interval $[t_0,T]$.
		\end{Lemma}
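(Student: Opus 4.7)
The plan is to argue by contradiction: assume $\tau_{\max}<T$ and show that the solution $x_T$ together with its first derivative admits a continuous extension to $t=\tau_{\max}$, so that the local existence theorem (Proposition~\ref{prop:local-wp}) can be applied at the point $(\tau_{\max},x_T(\tau_{\max}),\dot x_T(\tau_{\max}))$, producing a strict extension of $x_T$ beyond $\tau_{\max}$ and contradicting its maximality.

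The first step will be to use Lemma~\ref{Lemmacota} on the whole interval $[t_0,\tau_{\max})$. This already gives $\|\dot x_T(t)\|\le C_T/t\le C_T/t_0$ and the boundedness of $v_T(t)$. Since $x_T(t)=x^\ast+v_T(t)-\frac{t}{\alpha-1}\dot x_T(t)$ and $t\le T<+\infty$, the trajectory $x_T$ itself stays in a bounded set $B\subset\mathcal H$ on $[t_0,\tau_{\max})$. I will fix any closed, bounded, convex set $B$ containing this trajectory (e.g., a closed ball around $x^\ast$ of sufficient radius).

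The second step is to control $\ddot x_T$ on $[t_0,\tau_{\max})$. Because $\mu$ is continuous and strictly positive on the compact interval $[t_0,T]$, we have $0<\mu_{\min}\le \mu(t)\le \mu_{\max}<+\infty$ for $t\in[t_0,T]$. Proposition~\ref{p:Lipschitz} then yields, for every $t\in[t_0,\tau_{\max})$, a uniform bound on $\|\nabla\varphi_{\mu(t)}(x_T(t))-\nabla\varphi_{\mu(t)}(x^\ast)\|$ with constant $M_QL_g+G_B^2/(\mu_{\min}\sigma)$, and hence a uniform bound on $\|\nabla\varphi_{\mu(t)}(x_T(t))\|$ (adding the bounded term $\|\nabla\varphi_{\mu(t)}(x^\ast)\|$, which is itself controlled by $M_Q\sum_i\|\nabla g_i(x^\ast)\|$). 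Combining this with the already established bound on $\|\dot x_T(t)\|$ and the explicit form of the ODE \eqref{eq:inertial} gives a uniform bound on $\|\ddot x_T(t)\|$ on $[t_0,\tau_{\max})$.

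The third step is to conclude. Boundedness of $\ddot x_T$ makes $\dot x_T$ Lipschitz on $[t_0,\tau_{\max})$, hence uniformly continuous, and a Cauchy-type argument shows that the limits $\lim_{t\uparrow\tau_{\max}}x_T(t)=:\bar x$ and $\lim_{t\uparrow\tau_{\max}}\dot x_T(t)=:\bar y$ exist in $\mathcal H$. Applying Proposition~\ref{prop:local-wp} with initial time $\tau_{\max}$ and initial data $(\bar x,\bar y)$ produces a $C^2$ solution on some interval $[\tau_{\max},\tau_{\max}+\delta)$, which concatenates with $x_T$ to yield a $C^2$ solution on $[t_0,\tau_{\max}+\delta)$, contradicting the maximality of $\tau_{\max}$. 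Therefore $\tau_{\max}=T$, and since $T>t_0$ was arbitrary, this finishes the proof of Theorem~\ref{prop:global}.

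The main obstacle I anticipate is the uniform boundedness of $\nabla\varphi_{\mu(t)}(x_T(t))$: the Lipschitz bound of Proposition~\ref{p:Lipschitz} degrades as $1/\mu$, so it is crucial that $\mu$ is bounded below by a positive constant on the compact interval $[t_0,T]$. Once this a priori bound on the right-hand side of \eqref{eq:inertial} is secured, the rest is a routine blow-up/continuation argument.
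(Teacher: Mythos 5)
Your argument is correct and essentially matches the paper's: both hinge on the a~priori bounds from Lemma~\ref{Lemmacota}, the positivity of $\mu$ on the compact interval $[t_0,T]$, and the resulting uniform local Lipschitz constant and boundedness of $\nabla\varphi_{\mu(t)}$ from Proposition~\ref{p:Lipschitz}, followed by a continuation via the Cauchy--Lipschitz--Picard theorem. The only cosmetic difference is in the final step---you extract limits of $x_T$ and $\dot x_T$ at $\tau_{\max}$ via a Cauchy argument from the uniform bound on $\ddot x_T$ and then restart at $\tau_{\max}$ itself, whereas the paper restarts at a time $s<\tau_{\max}$ close to $\tau_{\max}$ and relies on a uniform local existence time to push past $\tau_{\max}$; both are standard, equivalent implementations of the same blow-up/continuation idea.
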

		\begin{proof}
			Let $[t_0,\tau_{\max})$ be the maximal interval given by Proposition~\ref{prop:local-wp}, and set $y_T=\dot x_T$.
			By Lemma \ref{Lemmacota}, the pair $(x_T(t),y_T(t))$ remains bounded on
			$[t_0,\tau_{\max})$. Moreover, since $\mu$ is continuous and strictly positive on $[t_0,T]$, we have $\inf_{t\in[t_0,T]}\mu(t)>0$. Proposition~\ref{p:Lipschitz} then implies that
			$x\mapsto \nabla\varphi_{\mu(t)}(x)$ is Lipschitz on the bounded set visited by $x_T$, with a Lipschitz constant that can be chosen uniformly for $t\in[t_0,T]$. Hence, the vector field of \eqref{sistema-lineal} is locally Lipschitz in $(x,y)$, uniformly in $t\in[t_0,T]$. Finally, if $\tau_{\max}<T$, restarting the problem \eqref{eq:inertial} at some $s$ close to $\tau_{\max}$ yields (by the Cauchy-Lipschitz-Picard theorem) a unique
			solution on $[s,s+\delta]$ for some $\delta>0$, which extends $(x_T,y_T)$ beyond $\tau_{\max}$, contradicting the maximality. Therefore $\tau_{\max}=T$.
		\end{proof}
		
		\subsection{A Decay Criterion for Nonincreasing Positive Functions}\label{ap:ord_t2}
		\begin{Lemma}\label{lemma:orden-t2}
			Assume that $\mu\colon [t_{0},\infty)\to(0,\infty)$ is continuously differentiable and nonincreasing, and that
			\[
			\lim_{s\to\infty}\mu(s)=0
			\quad\text{and}\quad
			\int_{t_{0}}^{\infty} s^{2}|\dot{\mu}(s)|\,ds < \infty.
			\]
			Then $\displaystyle{\lim_{s\to\infty}} s^{2}\mu(s)=0$.
		\end{Lemma}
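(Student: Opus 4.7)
The plan is to exploit the monotonicity of $\mu$ to express the value $\mu(t)$ itself as a tail integral of $|\dot\mu|$, and then bound $t^2\mu(t)$ by the corresponding tail of the integrable quantity $s^2|\dot\mu(s)|$.

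\textbf{Step 1: Represent $\mu(t)$ as a tail integral.} Since $\mu$ is $C^1$, nonincreasing, and satisfies $\lim_{s\to\infty}\mu(s)=0$, the fundamental theorem of calculus gives, for every $t\ge t_0$,
\[
\mu(t)=-\int_{t}^{\infty}\dot\mu(s)\,ds=\int_{t}^{\infty}|\dot\mu(s)|\,ds,
\]
where in the last equality I use $\dot\mu\le 0$.

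\textbf{Step 2: Multiply by $t^2$ and majorize using monotonicity of $s\mapsto s^2$.} Multiplying the previous identity by $t^2$ and using $t\le s$ in the region of integration yields
\[
t^2\mu(t)=\int_{t}^{\infty}t^2|\dot\mu(s)|\,ds\;\le\;\int_{t}^{\infty}s^2|\dot\mu(s)|\,ds.
\]
This is the key inequality: the factor $t^2$ is absorbed into the integrand thanks to $s\ge t$.

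\textbf{Step 3: Conclude via absolute continuity of the integral.} Since by assumption $\int_{t_0}^{\infty} s^2|\dot\mu(s)|\,ds<\infty$, the tail $\int_{t}^{\infty} s^2|\dot\mu(s)|\,ds$ tends to $0$ as $t\to\infty$. Combined with the nonnegativity of $t^2\mu(t)$, the squeeze argument gives $\lim_{t\to\infty}t^2\mu(t)=0$, as required.

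The proof is essentially a one-line computation once the tail representation in Step 1 is in place, so there is no serious obstacle; the only subtlety worth stressing is that both the monotonicity of $\mu$ (used to convert $-\dot\mu$ into $|\dot\mu|$ and to extract $t^2\le s^2$ inside the integral) and the hypothesis $\mu(s)\to 0$ (used to identify $\mu(t)$ with the tail integral rather than $\mu(t)-\mu(\infty)$) are simultaneously needed. The integrability hypothesis is not used to control $\mu$ directly; it is used only to guarantee that its tail vanishes.
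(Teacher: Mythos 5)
Your proof is correct and takes essentially the same route as the paper: write $\mu(t)$ as the tail integral $\int_t^\infty|\dot\mu(s)|\,ds$ using monotonicity and the vanishing limit, absorb the factor $t^2$ into the integrand via $t\le s$, and conclude because the tail of the integrable function $s^2|\dot\mu(s)|$ tends to zero.
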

		\begin{proof} 
			Fix $t\geq t_0$. Since $\lim_{s\to \infty}\mu(s)=0$ and $\mu$ is nonincreasing, we have $\dot{\mu}(s)\leq 0$ for all $s\geq t_0$. Therefore,  
			\begin{equation*}
				\begin{aligned}
					\mu(t)=\mu(t)-\lim_{s\to \infty}\mu(s)=-\int_{t}^{\infty}\dot{\mu}(s)ds=\int_t^{\infty}\vert \dot{\mu}(s)\vert ds.
				\end{aligned}
			\end{equation*}
			Consequently, for any $t\geq t_0$,
			\begin{equation*}
				0 \le t^{2}\mu(t)=t^2 \int_{t}^{\infty}\vert \dot{\mu}(s)\vert\,ds \leq \int_t^{\infty} s^2\vert \dot{\mu}(s)\vert\,ds.
			\end{equation*}
			Since $s^2\vert \dot{\mu}(s)\vert \in L^{1}(t_{0},\infty)$, the right-hand side converges to 0 as $t\to \infty$, which proves the claim.
		\end{proof}

		\subsection{First-Order Dynamics}\label{gradient}

		Using the previous notations, a similar analysis for the first-order gradient flow can be carried out. Consider the system
		\begin{equation}\label{eq:sist_first}
			\dot{x}(t)  = -\nabla \varphi_{\mu(t)}(x(t)),
		\end{equation}
		with $t\geq t_0 > 0$ and initial condition $x(t_0) = x_0$. As $\nabla \varphi_{\mu(t)}(x(t))$ is Locally Lipschitz, existence and uniqueness of the trajectories can be obtained with a similar analysis as in Theorem~\ref{prop:global}. We can obtain the following result regarding the asymptotic behavior of the system. 
		
		\begin{theorem} Assume that (SH) holds and that $\operatorname{argmin}\varphi\neq \emptyset$. Let $\mu\colon [t_0,+\infty)\to (0,\infty)$ be continuously differentiable and nonincreasing, and let $x(\cdot)$ be a solution of \eqref{eq:sist_first}. Set $d_0:=\operatorname{dist}(x(t_0),\operatorname{argmin}\varphi)$. Then the following hold.
			\begin{enumerate}[label={\rm (\roman*)}]
				\item The value gap satisfies
				\[
				-\mathcal{C}\mu(t)\leq \varphi_{\mu(t)}(x(t)) - \inf\varphi \leq \frac{d_0^2}{2(t-t_0)}+ \frac{\mathcal{C}}{2(t-t_0)}\int_{t_0}^t \mu(s)ds \quad \textrm{ for all } t\geq t_0. \]
				Consequently, 
				\[\vert \varphi_{\mu(t)}(x(t)) - \inf\varphi \vert = \mathcal{O}\left( \dfrac{1}{t} + \mu(t) +\frac{1}{t}\int_{t_0}^t \mu(s)ds \right). \]
				\item If $\mu \in L^1(t_0,\infty)$, then 
				\[\vert \varphi_{\mu(t)}(x(t)) - \inf\varphi \vert = \mathcal{O}\left( \dfrac{1}{t}\right).\]
				\item The (unsmoothed) value gap satisfies
				\[\varphi(x(t)) - \inf\varphi \leq \frac{d_0^2}{2(t-t_0)}+\mathcal{C}\mu(t)+\frac{\mathcal{C}}{t-t_0} \int_{t_0}^{t} \mu(s)ds \quad \textrm{ for all } t\geq t_0.\]
				In particular, if  $\mu \in L^1(t_0,\infty)$, then 
				\[\varphi(x(t)) - \inf\varphi = \mathcal{O}\left( \dfrac{1}{t} \right). \]
			\end{enumerate}
			
		\end{theorem}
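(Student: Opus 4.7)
The proof naturally splits into the two bounds in (i), from which (ii) and (iii) follow easily. For the \emph{lower bound}, I invoke Proposition~\ref{p:properties}(i): since $\varphi_{\mu(t)}(x(t)) \ge \varphi(x(t)) - \mathcal{C}\mu(t) \ge \inf\varphi - \mathcal{C}\mu(t)$, the inequality $\varphi_{\mu(t)}(x(t)) - \inf\varphi \ge -\mathcal{C}\mu(t)$ is immediate. For the \emph{upper bound}, I pick $x^{\ast}\in \operatorname{argmin}\varphi$ realizing the distance, so that $\|x(t_0)-x^{\ast}\| = d_0$, and set $u(s) := \tfrac12\|x(s)-x^{\ast}\|^2$. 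Using \eqref{eq:sist_first}, convexity of $\varphi_{\mu(s)}$, and $\varphi_{\mu(s)}(x^{\ast}) \le \varphi(x^{\ast}) = \inf\varphi$, I get
\[
\dot u(s) = -\langle x(s)-x^{\ast},\nabla\varphi_{\mu(s)}(x(s))\rangle \le -\bigl(\varphi_{\mu(s)}(x(s))-\inf\varphi\bigr).
\]
Integrating over $[t_0,t]$ and discarding $u(t)\ge 0$ yields the integral estimate
\[
\int_{t_0}^{t}\bigl(\varphi_{\mu(s)}(x(s))-\inf\varphi\bigr)\,ds \;\le\; \frac{d_0^2}{2}.
\]

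The key technical step is to convert this integral bound into a pointwise bound on $\varphi_{\mu(t)}(x(t))-\inf\varphi$. For this I introduce the auxiliary quantity $\mathcal{F}(s) := \varphi_{\mu(s)}(x(s)) - \inf\varphi + \mathcal{C}\mu(s)$, which is nonnegative by the lower bound already established. Using \eqref{eq:sist_first} and the chain-rule computation analogous to \eqref{eq:deriv_varphi_mu}, I obtain
\[
\dot{\mathcal{F}}(s) \;=\; -\|\nabla\varphi_{\mu(s)}(x(s))\|^2 \;+\; \dot\mu(s)\bigl(\mathcal{C}-\mathcal{D}(\lambda^{\mu(s)}(x(s)))\bigr).
\]
Since $\dot\mu(s)\le 0$ and $\mathcal{C}-\mathcal{D}\ge 0$ by the definition of $\mathcal{C}$, we have $\dot{\mathcal{F}}(s)\le 0$, i.e.\ $\mathcal{F}$ is nonincreasing. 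Consequently
\[
(t-t_0)\,\mathcal{F}(t) \;\le\; \int_{t_0}^{t}\mathcal{F}(s)\,ds \;=\; \int_{t_0}^{t}\bigl(\varphi_{\mu(s)}(x(s))-\inf\varphi\bigr)\,ds + \mathcal{C}\int_{t_0}^{t}\mu(s)\,ds \;\le\; \frac{d_0^2}{2} + \mathcal{C}\!\int_{t_0}^{t}\!\mu(s)\,ds,
\]
and dropping the nonnegative term $\mathcal{C}\mu(t)$ on the left-hand side gives the claimed upper bound in (i). The $\mathcal{O}$ statement then follows immediately by combining the two-sided bound.

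For (ii), if $\mu\in L^1(t_0,\infty)$ then $\int_{t_0}^t\mu(s)\,ds$ is uniformly bounded, so the second term in the upper bound is $\mathcal{O}(1/(t-t_0))=\mathcal{O}(1/t)$, and combining with $|\varphi_{\mu(t)}(x(t))-\inf\varphi|\le \mathcal{C}\mu(t)+\mathcal{O}(1/t)$ and $\mu(t)\to 0$ gives the $\mathcal{O}(1/t)$ rate. For (iii), I simply use $\varphi(x(t))\le \varphi_{\mu(t)}(x(t)) + \mathcal{C}\mu(t)$ from \eqref{eq:prop1} and substitute the upper bound from (i), yielding the stated inequality; the $L^1$ case again reduces to $\mathcal{O}(1/t)$.

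The main conceptual obstacle is the lack of monotonicity of $\varphi_{\mu(s)}(x(s))$ itself along the flow (because of the $-\dot\mu(s)\mathcal{D}$ contribution coming from the time-dependent regularizer), which prevents the usual ``pointwise value $\le$ average of values'' trick. The right fix is to absorb the worst-case $\mu$-drift into the explicit correction $+\mathcal{C}\mu(s)$: this yields a \emph{monotone} surrogate $\mathcal{F}$, at the unavoidable cost of an additive $\mathcal{O}(\mu(t))$ in the final bound, which is precisely the form stated in (i).
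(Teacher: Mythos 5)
Your proof is correct and follows essentially the same route as the paper: it introduces the same nonnegative surrogate $\mathcal{F}(t)=\varphi_{\mu(t)}(x(t))-\inf\varphi+\mathcal{C}\mu(t)$, shows $\dot{\mathcal F}\le 0$ via the chain rule with $\partial_\mu\varphi_\mu=-\mathcal{D}(\lambda^\mu)$, integrates the $u$-dynamics, and uses monotonicity of $\mathcal{F}$ to convert the integral bound into a pointwise one. The only point worth tightening is in (ii): you invoke ``$\mu(t)\to 0$'' to dispose of the $\mathcal{C}\mu(t)$ term, but what is actually needed (and what the paper uses) is $\mu(t)=\mathcal O(1/t)$, which follows from $\mu$ nonincreasing and $\mu\in L^1(t_0,\infty)$ via $\tfrac{t}{2}\mu(t)\le \int_{t/2}^{t}\mu(s)\,ds\to 0$; mere convergence to zero is not enough. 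Incidentally, both your computation and the paper's proof yield the constant $\frac{\mathcal{C}}{t-t_0}$ in front of $\int_{t_0}^t\mu(s)\,ds$, rather than the $\frac{\mathcal{C}}{2(t-t_0)}$ appearing in the theorem statement, so that factor of $2$ in the statement appears to be a typo in the paper.
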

		\begin{proof} (i): Let $x^{\ast}\in \operatorname{argmin}\varphi$ arbitrary and observe that 
			\begin{align*}
				\dfrac{d}{dt}\varphi_{\mu(t)}(x(t)) &= \dot{\mu}(t)\dfrac{d}{d\mu}\varphi_{\mu(t)}(x(t)) +\langle \nabla \varphi_{\mu(t)}(x(t)),\dot{x}(t)\rangle \\
				&= -\dot{\mu}(t)\mathcal{D}\left(\lambda^{\mu(t)}(x(t))\right) - \Vert \nabla \varphi_{\mu(t)}(x(t))\Vert^2 \\
				&\leq -\mathcal{C}\dot{\mu}(t) - \Vert \nabla \varphi_{\mu(t)}(x(t))\Vert^2.
			\end{align*}
			Let us define 
			\[\mathcal{F}(t):=  \varphi_{\mu(t)}(x(t)) - \inf\varphi + \mathcal{C}\mu(t).\]
			Notice that $\mathcal{F}(t)$ is positive thanks to inequality \eqref{eq:prop1} and the previous computation gives that it is nonincreasing. Let us recall that \eqref{eq:prop1} implies $\varphi_{\mu(t)}(x^*)\le \varphi(x^*)=\inf\varphi$. Combining the previous with the convexity of $\varphi_{\mu(t)}$ yields
			\begin{align*}
				\dfrac{d}{dt}\left( \dfrac{1}{2}\Vert x(t) - x^*\Vert^2\right) &= \left\langle \dot{x}(t), x(t) - x^* \right\rangle \\
				&= - \left\langle \nabla \varphi_{\mu(t)}(x(t)), x(t) - x^* \right\rangle \\
				&\leq - \left( \varphi_{\mu(t)}(x(t)) - \varphi_{\mu(t)}(x^*) \right) \\
				&\leq - \left( \varphi_{\mu(t)}(x(t)) - \inf\varphi \right) \\
				&=- \left( \mathcal{F}(t) - \mathcal{C}\mu(t) \right).
			\end{align*}
			Integrating the previous over $[t_0,t]$, rearranging and neglecting the negative term, we obtain 
			\[\int_{t_0}^{t} \mathcal{F}(s)\, ds \leq \frac{1}{2}\Vert x_0 - x^*\Vert^2  + \mathcal{C}\int_{t_0}^{t} \mu(s)\, ds.\]
			Since $\mathcal{F}$ is nonincreasing, one has 
			\[ (t-t_0)\mathcal F(t) \leq \int_{t_0}^{t} \mathcal{F}(s)\, ds \leq \frac{1}{2}\Vert x_0 - x^*\Vert^2  + \mathcal{C}\int_{t_0}^{t} \mu(s)\, ds.\]
			Therefore, for all $t>t_0$
			\[-\mathcal{C}\mu(t)\leq \varphi_{\mu(t)}(x(t)) - \inf\varphi \leq \dfrac{1}{2(t-t_0)}\Vert x_0 - x^*\Vert^2 + \dfrac{\mathcal{C}}{t-t_0}\int_{t_0}^{t} \mu(s)\, ds.\]
			Since $x^{\ast}\in \operatorname{argmin}\varphi$ is arbitrary, we obtain the first inequality of (i). The second assertion of (i) follows directly from the above inequality.\\
			(ii): Since  $\mu \in L^1(t_0,\infty)$ and nonincreasing, it is clear that $\mu(t) = \mathcal{O}(1/t)$ and $\tfrac{1}{t-t_0}\int_{t_0}^{t} \mu(s) ds = \mathcal{O}(1/t)$. Hence, the result follows from the first inequality in (i). \\
			(iii): It follows from assertion (i) and  bound \eqref{eq:prop1}. 
		\end{proof}
		
		\subsection{Opial's Lemma}\label{opial_Appendix}
		
		In this appendix, we recall Opial's lemma \cite{opial1967weak}, which is a standard tool to establish weak convergence of trajectories in Hilbert spaces and is used in the proof of Theorem~\ref{t:traj}.
		
		\begin{Lemma}[Opial]\label{l:opial} Let $S$ be a nonempty subset of a Hilbert space $\mathcal{H}$, and let $x \colon [t_0,+\infty )\to \mathcal{H}$ be a function. Assume that 
			\begin{enumerate}[label={\rm (\roman*)}]
				\item For every $z \in S$, the limit $\lim_{t \to +\infty} \Vert x(t) - z \Vert$ exists.
				\item Every weak sequential cluster point of $x(\cdot)$ belongs to $S$.
			\end{enumerate}
			Then there exists $x_\infty\in S$ such that $x(t)\rightharpoonup x_\infty$ as $t\to+\infty$.
		\end{Lemma}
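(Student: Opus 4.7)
The plan is to run the classical Opial argument: combine the boundedness of $x(\cdot)$ coming from (i) with the uniqueness of weak cluster points obtained through a two-point identity. First I would verify that $x(\cdot)$ is bounded: fixing any $z_0\in S$, condition (i) ensures that $t\mapsto \|x(t)-z_0\|$ is bounded on $[t_0,+\infty)$, hence so is $x(\cdot)$. By the Banach--Alaoglu theorem (sequential weak compactness of bounded sets in a Hilbert space), every sequence $t_n\to+\infty$ admits a subsequence $(t_{n_k})$ along which $x(t_{n_k})\rightharpoonup \bar x$ for some $\bar x\in\mathcal H$, and condition (ii) then forces $\bar x\in S$. Thus the set of weak sequential cluster points of $x(\cdot)$ is a nonempty subset of $S$.

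The core step is to prove that this set is a singleton. Suppose, for contradiction, that there exist two weak cluster points $z_1,z_2\in S$ with $z_1\ne z_2$, realized along sequences $t_n^{(1)},t_n^{(2)}\to+\infty$. By condition (i), both $\ell_i:=\lim_{t\to\infty}\|x(t)-z_i\|$ exist. Expanding squared norms yields the identity
\begin{equation*}
\|x(t)-z_1\|^2-\|x(t)-z_2\|^2
= -2\langle x(t),\,z_1-z_2\rangle + \|z_1\|^2-\|z_2\|^2,
\end{equation*}
which shows that $t\mapsto \langle x(t),\,z_1-z_2\rangle$ has a limit in $\mathbb R$, say $L$. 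Evaluating this limit along $t_n^{(1)}$ (where $x(t_n^{(1)})\rightharpoonup z_1$) and along $t_n^{(2)}$ (where $x(t_n^{(2)})\rightharpoonup z_2$) gives
\begin{equation*}
\langle z_1,z_1-z_2\rangle = L = \langle z_2,z_1-z_2\rangle,
\end{equation*}
hence $\|z_1-z_2\|^2=0$, a contradiction. Therefore $x(\cdot)$ admits exactly one weak sequential cluster point, call it $x_\infty\in S$.

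Finally, I would upgrade ``uniqueness of weak cluster points plus boundedness'' to weak convergence $x(t)\rightharpoonup x_\infty$ as $t\to+\infty$. If this failed, there would exist $y\in\mathcal H$, $\varepsilon>0$, and $s_n\to+\infty$ with $|\langle x(s_n)-x_\infty,y\rangle|\ge\varepsilon$; but $(x(s_n))$ is bounded, so a further subsequence converges weakly to some point which, by the previous step, must equal $x_\infty$, contradicting the choice of $s_n$. I expect the main subtlety to be this last passage from sequential weak convergence to the genuine limit as $t\to+\infty$ (rather than the two-point identity, which is routine once set up), since one must rule out oscillatory behavior of a continuous trajectory between copies of the same weak limit.
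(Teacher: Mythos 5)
The paper does not prove this lemma: it is stated as a recalled classical result with a citation to Opial's original paper, so there is no in-paper argument to compare against. Your proof is the standard one and is correct. The three steps are all sound: (1) boundedness of the trajectory from condition (i) with a single fixed $z_0\in S$; (2) uniqueness of the weak sequential cluster point via the polarization-type identity $\|x(t)-z_1\|^2-\|x(t)-z_2\|^2 = -2\langle x(t), z_1-z_2\rangle + \|z_1\|^2-\|z_2\|^2$, which forces $\langle x(t), z_1-z_2\rangle$ to have a limit and then pins $\langle z_1-z_2, z_1-z_2\rangle=0$; (3) upgrading to $x(t)\rightharpoonup x_\infty$ as $t\to\infty$ by a contradiction/subsequence argument at the level of each fixed test vector $y$.

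One small remark on your closing comment: the ``main subtlety'' is not really the last passage. Condition (i) is \emph{not} needed there, and the trajectory need not be continuous (the lemma is stated for an arbitrary function $x(\cdot)$); the argument is a purely sequential one applied to the real-valued map $t\mapsto\langle x(t),y\rangle$, and $\mathbb{R}$ is first countable, so there is no genuine net-versus-sequence issue once the cluster point is unique and the trajectory is bounded. The real content of Opial's lemma lies in step (2): without the hypothesis that $\lim_{t\to\infty}\|x(t)-z\|$ exists for \emph{every} $z\in S$, the two-point identity would give you nothing and distinct weak cluster points could coexist. So your proof is right, but I would not describe the last step as the subtle one.
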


	\end{document}